\newtheorem{theorem}{Theorem}[section]
\newtheorem{definition}{Definition}[section]
\newtheorem{corollary}{Corollary}[section]
\newtheorem{remark}{Remark}[section]
\newtheorem{proposition}{Proposition}[section]
\numberwithin{equation}{section}
\begin{document}
\hyphenation{ap-pro-xi-ma-tion}
\title[Approximation for the solution to a parabolic problem  in a thin star-shaped junction]
{Asymptotic approximation for the solution to a semi-linear parabolic problem  in a thin star-shaped junction}
\author[A.V. Klevtsovskiy and  T.A.~Mel'nyk]{Arsen V. Klevtsovskiy and Taras A. Mel'nyk
}
\address{\hskip-12pt  Faculty of Mathematics and Mechanics, Department of Mathematical Physics\\
Taras Shevchenko National University of Kyiv\\
Volodymyrska str. 64,\ 01601 Kyiv,  \ Ukraine
}
\email{avklevtsovskiy@gmail.com, \ \ melnyk@imath.kiev.ua}

\begin{abstract}
A semi-linear parabolic problem  is considered  in a thin $3D$
star-shaped junction that consists of several thin curvilinear cylinders that are joined through a domain (node) of diameter $\mathcal{O}(\varepsilon).$

The purpose is to study the asymptotic behavior of the solution $u_\varepsilon$ as $\varepsilon \to 0,$ i.e. when the
star-shaped junction is transformed in a graph. In addition, the passage to the limit is accompanied by special intensity factors $\{\varepsilon^{\alpha_i}\}$ and $\{\varepsilon^{\beta_i}\}$ in
nonlinear perturbed Robin boundary conditions.

We establish qualitatively different cases in the asymptotic behaviour of the solution depending on the value of the parameters $\{{\alpha_i}\}$ and $\{\beta_i\}.$
Using the multi-scale analysis, the asymptotic approximation for the solution is constructed and justified as the parameter $\varepsilon \to 0.$
Namely, in each case we derive the limit problem  $(\varepsilon =0)$ on the graph
with the corresponding Kirchhoff transmission conditions (untypical in some cases) at the vertex, \, define other terms of the asymptotic approximation and prove
appropriate asymptotic estimates that justify these coupling conditions at the vertex and show  the impact of the local geometric heterogeneity of the node and physical processes in the node on some properties of the solution.

\end{abstract}

\keywords{Approximation,  semi-linear parabolic problem,  nonlinear perturbed boundary condition,
asymptotic estimate, thin star-shaped junction.
\\
\hspace*{9pt} {\it MOS subject classification:} \  35K57, 35K55, 35B40, 35B25, 74K30
}

\maketitle
\tableofcontents
\section{Introduction}\label{Sect1}
We are interested in the study of evolution phenomena in junctions composed of several thin curvilinear cylinders that are joined through a domain of diameter $\mathcal{O}(\varepsilon)$ (see Fig.~\ref{f1}).
Mathematical models those are described by semi-linear parabolic equations that allow to model a variety of  biological and physical phenomena (reaction and diffusion processes in biology and biochemistry, heat-mass transfer, etc.) in channels, junctions and networks.

 As we can see from Fig.~\ref{f1},
 a thin junction is shrunk into a graph  as the small parameter $\varepsilon,$  characterizing  thickness of the thin cylinders and domain connecting them, tends to zero.  Thus, the aim is to find the corresponding limit problem in this graph and prove the estimate for the difference between the solutions  of these two problems.

\begin{figure}[htbp]
\begin{center}
\includegraphics[width=5cm]{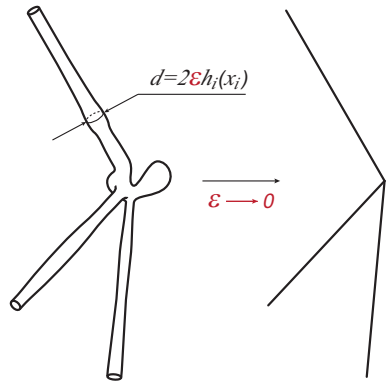}
\end{center}
\caption{Transformation of a thin star-shaped junction into a graph}\label{f1}
\end{figure}

A large amount of physical and mathematical articles and books dedicated to different models on graphs, has been published for the last three decades, e.g. \cite{KowalSivanEntinWohlmanImry1990,AvishaiLuck1992,Kuchment2004,Below1988, Mehmeti1994, PokornyBook2004,BandaHertyKlar2006,KostrykinPotthoffSchrader2012,Manko2014,FlyudGolovaty2017,Mugnolo,Mug-Rom-2007,Pel-Schneider}.
The main question arising in problems on graphs is point interactions at nodes of networks, i.e., the type of coupling conditions at vertices of the graph.

Also there is increasing interest in the investigation of the influence of a local geometric heterogeneity
 in vessels on the blood flow. This is both an aneurysm (a pathological extension of an artery like a bulge) and a stenosis (a pathological restriction of an artery). In \cite{Mardal} the authors classified 12 different aneurysms and proposed a numerical approach for this study. The aneurysm models have been meshed with 800,000 -- 1,200,000 tetrahedral cells containing three boundary layers. However, as was noted by the authors, {\it the question how to model blood flow with sufficient accuracy  is still open}.

Because of those point interactions and local geometric irregularities, the reaction-diffusion processes,  heat-mass transfer and flow motions
in networks posses many distinguishing features. A natural approach to explain the meaning of point interactions at vertices is the use of the limiting procedure mentioned above.

There are several asymptotic approaches to study such problems.
As far as we known, the paper \cite{Freid-Went-1993} was the first paper, where
convergence results for linear diffusion processes in a region with narrow tubes were obtained with the help of
 the martingal-problem method of proving weak convergence. As a result, the standard gluing conditions (or so-called "Kirchhoff" transmission  conditions) at the vertices of the graph were derived. Then this probabilistic approach was generalized in \cite{Albe-Kusu-2012}.

The method of the partial asymptotic domain decomposition was proposed in \cite{Pan-decom-1998} and then it applied
to different problems under the following assumptions:
the uniform boundary conditions on the lateral surfaces of thin rectilinear cylinders, the right-hand sides depend only on
the longitudinal variable in the direction of the corresponding cylinder and they are constant in some neighbourhoods of the nodes and vertices
(see \cite{C-C-P-2006,P-book,P-P-Stokes-1-2015,P-P-Stokes-2-2015,GauPanPia2016}).
It  follows from these papers that the main difficulty is the identification of the behaviour of solutions in neighbourhoods of the nodes.

To overcome this difficulty and to construct the leading terms of the
elastic field asymptotics for the solution of the equations of anisotropic elasticity on junctions of
thin three dimensional beams, the following assumptions were made in \cite{N-S-04}:
 the first terms of the volume force $f$ and surface load $g$ on the rods satisfy special orthogonality conditions (see $(3.5)_1$ and $(3.6))$  and the second term of the volume force $f$ has an identified form and depends only on the longitudinal variable;
 similar orthogonality conditions for the right-hand sides on the nodes are satisfied (see $(3.41)$) and the second term is a piecewise constant vector-function (see $(3.42)$).
By these assumptions, the displacement field at each node can be approximated by a rigid
displacement. As a result, the approximation does not contain boundary layer terms, i.e., the
asymptotic expansion is not complete a priori \cite[ Remark 3.1]{N-S-04}.
Similar approach was used for thin two dimensional junctions in \cite{N-S-02}.

There is a special interest in spectral problems on thin graph-like structures, since such problems have many applications.
A fairly complete review on this topic has been presented in \cite{Kuchment2002}. The main task is to study the possibility of approximating the spectra of different operators by the spectra of appropriate operators on the corresponding graph. The convergence of  spectra for the Laplacians
with different boundary conditions (Neumann, Dirichlet and Robin) at various levels of generality was proved in  \cite{Kuch-Zeng2001,Exner-Post-2005,Rub-Schatz-2001,Duclos-Exner-1995,Post-2005,Mol-Vain-2007,Mol-Vain-2007+,Grieser2008}.
In \cite{Kuch-Zeng2001} the authors took into account large protrusions at the vertices; as a result different Kirchhoff  conditions are appeared depending on the value of the protrusion.
It was demonstrated in \cite{Grieser2008} that the type of the transmission conditions depends crucially on the boundary layer phenomenon in the vicinity of the nodes; in addition the complete asymptotic expansions for
the $k$-th eigenvalue and the eigenfunctions were obtained there, uniformly for $k,$  in terms of scattering data
on a non-compact limit space.
Interesting multifarious transmission conditions are obtained in the limit passage for spectral problems on thin periodic honeycomb lattice
\cite{Kuch-Kunya2002,Naz-Ruo-Uus-2016}.
 Numerical approach to deduce the vertex coupling conditions for the nonlinear Schr\"odinger equation
on two-dimensional thin networks was proposed in \cite{Grieser2015}.

\subsection{Novelty and method of the study}

In the present paper we continue to develop the asymptotic method proposed in our papers~\cite{Mel_Klev_AA-2016,Mel_Klev_AA-2017} for linear elliptic problems, which does not need the above mentioned assumptions. In addition, our approach gives the better estimate for the difference between the solution of the starting  problem and the solution of the corresponding limit problem (compare (1) and (2) in \cite{Mel_Klev_AA-2016}).

Here we have adapted  this method to semi-linear parabolic problems with nonlinear perturbed Robin boundary conditions
\begin{equation}\label{in-1}
    \partial_{\boldsymbol{\nu}} u_\varepsilon
 +  \varepsilon^{\alpha_i}\kappa_i\big({u_\varepsilon}, x_i, t\big)
  =  \varepsilon^{\beta_i}\, {\varphi_{\varepsilon}^{(i)}(x, t)}
\end{equation}
both on the boundaries of the thin curvilinear cylinders $(i\in \{1, 2, 3\})$ and on the boundary of the node $(i=0),$ which
depend on special intensity factors $\varepsilon^{\alpha_i}$ and $\varepsilon^{\beta_i}.$
We study the influence of these factors on the asymptotic behaviour of the solution as $\varepsilon \to 0.$

It turned out that the asymptotic behaviour of the solution depends on
 the parameters $\{\alpha_i\}$ and $\{\beta_i\},$ and essentially on the parameter $\alpha_0$
 that characterizes the intensity of processes at the boundary of the node.
  It is natural to expect that physical processes on the node boundary   provoke crucial changes in the whole process in the thin star-shaped junction, in particular they can reject the traditional Kirchhoff transmission conditions at the vertex in some cases. We discovery three  qualitatively different cases in the asymptotic behaviour of the solution. If $\alpha_0 > 0,\ \beta_0>0, \  \alpha_i, \beta_i\ge 1, \ i\in\{1, 2, 3\},$ then we have classical
  Kirchhoff transmission conditions. In the case $\alpha_0 = 0,\ \beta_0=0, \ \alpha_i, \beta_i\ge 1,  \ i\in\{1, 2, 3\},$
 new   gluing conditions at the vertex $x=0$ of the graph look as follows
\begin{gather*}
\omega_{0}^{(1)}(0, t) = \omega_{0}^{(2)}(0, t) = \omega_{0}^{(3)}(0, t),
\\
 \pi h_1^2 (0) \frac{\partial\omega_{0}^{(1)}}{\partial x_1} (0, t) +  \pi h_2^2 (0) \frac{\partial\omega_{0}^{(2)}}{\partial x_2} (0, t)
 + \pi h_3^2 (0) \frac{\partial\omega_{0}^{(3)}}{\partial x_3} (0, t)
 -   \big|\Gamma_0\big|_2 \, \kappa_0 \big(\omega_0^{(1)}(0, t)\big)
 = - \int_{\Gamma_{0}} \varphi^{(0)}(\xi, t) \, d\sigma_{\xi},
\end{gather*}
where $\big|\Gamma_0\big|_2$ is the Lebesgue measure of the boundary $\Gamma_0$ of the node.
If $\alpha_0 < 0$ the limit problem splits in three independent problems with the Dirichlet conditions.

To construct the asymptotic approximation in each case, we use the  method of matching asymptotic expansions (see  \cite{I1992}) with special cut-off functions. The approximation  consists of two parts, namely, the regular part of the asymptotics  located inside of each thin cylinder and the inner part of the asymptotics discovered in a neighborhood of the node.
The terms of the inner part of the asymptotics are special solutions of boundary-value problems in an unbounded domain with different outlets at infinity. It turns out they have polynomial growth at infinity. Matching these parts, we derive the limit problem  $(\varepsilon =0)$ in the graph and the corresponding coupling conditions at the vertex.

Also we have proved energetic estimates in each case which allow  to identify more precisely
the impact of the local geometric heterogeneity of the node and physical processes in the node on some properties of the solution.
It should be stressed that the error estimates and convergence rate are very important both for  justification of  adequacy of one- or two-dimensional models that aim at  description of actual three-dimensional thin bodies and for the study of boundary effects and effects of local (internal) inhomogeneities in applied problems. In addition, those estimates justify  transmission conditions of Kirchhoff  type for metric graphs.

Thus, our approach makes it possible to take into account various factors (e.g. variable thickness of thin curvilinear cylinders,
 inhomogeneous nonlinear boundary conditions, geometric characteristics of nodes, etc.) in statements of boundary-value problems on graphs.

\smallskip

The rest of this paper is organized as follows.
The statement of the problem and features of the investigation are presented in Section~\ref{statement}.
In Section~\ref{exis-uniq},  the existence and uniqueness of the weak solution is proved for every fixed value $\varepsilon.$ Also
a priori estimates  and auxiliary inequalities are deduced there.
In Section~\ref{Formal asymptotics}
  we formally construct the leading terms both of the regular part of the asymptotics and the inner one
in the case $\alpha_0 \ge 0,$ $\alpha_i \ge 1,$ $ i\in\{1, 2, 3\}.$ Then using the constructed terms we build  the approximation and prove the corresponding asymptotic estimates in Section~\ref{justification}.
Section~\ref{alpha_0<0} shows us what will happen in the case
$\alpha_0 < 0,$ $\alpha_i \ge 1,$ $i\in\{1, 2, 3\}.$ The main novelty is that the limit problem splits into three independent problems
with the uniform Dirichlet condition at the vertex. In addition, the view of asymptotic ansatzes are very sensitive to the parameter $\alpha_0.$
Here we construct the approximation and prove the corresponding estimates for more typical and realistic subcases $\alpha_0 \in (-1,  0)$ and $\alpha_0 =-1;$  general case is only discussed.
In Section~\ref{comments}, we analyze obtained results and discuss research perspectives.

\section{Statement of the problem}\label{statement}

The model thin star-shaped junction  $\Omega_\varepsilon$  consists of three thin curvilinear cylinders
$$
\Omega_\varepsilon^{(i)} =
  \Bigg\{
  x=(x_1, x_2, x_3)\in\Bbb{R}^3 \ : \
  \varepsilon \ell_0<x_i<\ell_i, \quad
  \sum\limits_{j=1}^3 (1-\delta_{ij})x_j^2<\varepsilon^2 h_i^2(x_i)
  \Bigg\}, \quad i=1,2,3,
$$
that are joined through a domain $\Omega_\varepsilon^{(0)}$ (referred in the sequel "node").
Here $\varepsilon$ is a small parameter; $\ell_0\in(0, \frac13), \ \ell_i\geq1, \ i=1,2,3;$
 the positive function $h_i$ belongs to the space $C^1 ([0, \ell_i])$
and it is equal to some constants in neighborhoods of the points $x=0$ and $x_i=1$ $(i= 1, 2, 3)$;
the symbol $\delta_{ij}$ is the Kroneker delta, i.e.,
$\delta_{ii} = 1$ and $\delta_{ij} = 0$ if $i \neq j.$

\begin{figure}[htbp]
\begin{center}
\includegraphics[width=4cm]{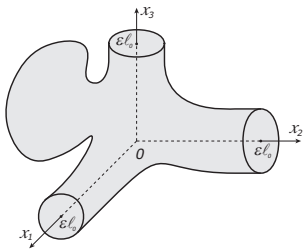}
\end{center}
\caption{The node $\Omega_\varepsilon^{(0)}$}\label{f2}
\end{figure}

The node $\Omega_\varepsilon^{(0)}$ (see Fig.~\ref{f2}) is formed by the homothetic transformation with coefficient $\varepsilon$ from a bounded domain
$\Xi^{(0)}\subset \Bbb R^3$,  i.e.,
$
\Omega_\varepsilon^{(0)} = \varepsilon\, \Xi^{(0)}.
$
In addition, we assume that its boundary contains the disks
$$
\Upsilon_\varepsilon^{(i)} (\varepsilon\ell_0) =
\Bigg\{
 x\in\Bbb{R}^3 \, : \ x_i=\varepsilon \ell_0, \quad
 \sum\limits_{j=1}^3 (1-\delta_{ij})x_j^2<\varepsilon^2 h_i^2(\varepsilon \ell_0)
\Bigg\}, \quad i=1,2,3,
$$
and denote
$
\Gamma_\varepsilon^{(0)} :=
\partial\Omega_\varepsilon^{(0)} \backslash
\left\{
 \overline{\Upsilon_\varepsilon^{(1)} (\varepsilon \ell_0)} \cup
 \overline{\Upsilon_\varepsilon^{(2)} (\varepsilon \ell_0)} \cup
 \overline{\Upsilon_\varepsilon^{(3)} (\varepsilon \ell_0)}
\right\}.
$

Thus the model thin star-shaped junction  $\Omega_\varepsilon$  (see Fig.~\ref{f3})
is   the interior of the union
$
\bigcup_{i=0}^{3}\overline{\Omega_\varepsilon^{(i)}}
$
and we assume that it has the Lipschitz boundary.

\begin{figure}[htbp]
\begin{center}
\includegraphics[width=6cm]{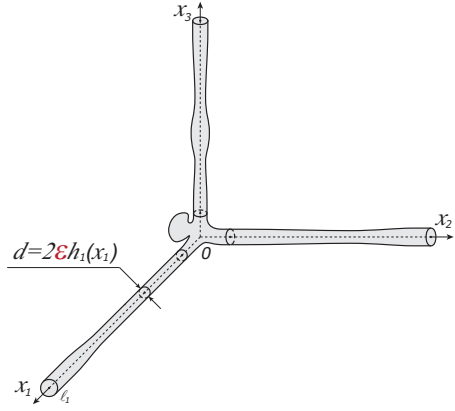}
\end{center}
\caption{The model thin star-shaped junction $\Omega_\varepsilon$}\label{f3}
\end{figure}

\begin{remark}
We can consider more general thin star-shaped junctions with arbitrary orientation of thin cylinders (their number can be also arbitrary).
 But to avoid technical and huge calculations and to demonstrate the main steps of the proposed asymptotic approach
 we consider the case  when the cylinders are placed on the coordinate axes.
\end{remark}

In $\Omega_\varepsilon,$ we consider the following semi-linear parabolic problem:
\begin{equation}\label{probl}
\left\{\begin{array}{rclll}
 \partial_t u_\varepsilon (x, t)
 -  \, \Delta_x u_\varepsilon (x, t)
 +  k\big({u_\varepsilon}(x, t)\big)
 & = & f(x, t), &
    (x, t)\in\Omega_\varepsilon \times (0, T), &
\\
  \partial_{\boldsymbol{\nu}} u_\varepsilon(x, t)
 +  \varepsilon^{\alpha_0}\kappa_0\big({u_\varepsilon}(x, t)\big)
 & = & \varepsilon^{\beta_0}\, {\varphi_{\varepsilon}^{(0)}(x, t)}, &
    (x, t)\in{\Gamma_\varepsilon^{(0)}} \times (0, T), &
\\
  \partial_{\boldsymbol{\nu}} u_\varepsilon(x, t)
 +  \varepsilon^{\alpha_i}\kappa_i\big({u_\varepsilon}(x, t), x_i, t\big)
 & = & \varepsilon^{\beta_i}\, {\varphi_{\varepsilon}^{(i)}(x, t)}, &
    (x, t)\in{\Gamma_\varepsilon^{(i)}} \times (0, T), & i=1,2,3,
\\
    u_\varepsilon(x, t)
 & = & 0, &
    (x, t)\in{\Upsilon_{\varepsilon}^{(i)} (\ell_i)} \times (0, T), & i=1,2,3,
\\
    u_\varepsilon(x, 0)
 & = & 0, & x\in\Omega_\varepsilon,
\end{array}\right.
\end{equation}
where
${\Gamma_\varepsilon^{(i)}} =
\partial\Omega_\varepsilon^{(i)} \cap \{ x\in\Bbb{R}^3 \ : \ \varepsilon \ell_0<x_i<\ell_i \}, \
T>0, \ \partial_{t} = \partial / \partial t, \ \partial_{\boldsymbol{\nu}}$
is the outward normal derivative, and the parameters $\{\alpha_i\}_{i=0}^3 \subset \Bbb R,$ $ \beta_0 \geq 0, \ \beta_i \geq 1, \ i=\overline{1,3}.$
For the given functions $f,$ $k,$ $\{\varphi_{\varepsilon}^{(i)}, \kappa_i\}_{i=0}^3$  we assume the following conditions:
\begin{enumerate}
 \item[{\bf C1.}]
 the function $f$ belongs to the space $C\left(\overline{\Omega_{a_0}} \times [0, T] \right)$
and its restriction on the  curvilinear cylinder  $\Omega_{a_0}^{(i)}$ $(i=1,2,3)$
belong to the space
$
C_{\overline{x}_i}^1
\left( \overline{\Omega_{a_0}^{(i)}} \times [0, T] \right)
$
(the space of all continuous functions having continuous derivatives with respect to variables $\overline{x}_i$ in
$\overline{\Omega_{a_0}^{(i)}} \times [0, T]),$
where $a_0$ is a fixed positive number such that $\Omega_{\varepsilon} \subset \Omega_{a_0}$ for
all values of the small parameter $\varepsilon\in (0, \varepsilon_0)$
and
$$
\overline{x}_i =
\left\{\begin{array}{lr}
(x_2, x_3), & i=1, \\
(x_1, x_3), & i=2, \\
(x_1, x_2), & i=3;
\end{array}\right.
$$
\item[{\bf C2.}] the functions
$
\varphi_{\varepsilon}^{(0)} (x, t) :=
\varphi^{(0)} \left(\dfrac{x}{\varepsilon}, t \right)
$
and
$
\varphi_{\varepsilon}^{(i)} (x, t) :=
\varphi^{(i)} \left(\dfrac{\overline{x}_i}{\varepsilon}, x_i, t \right), \ \ i=1,2,3,
$
belong to the spaces
$
C\left(\overline{\Omega_{a_0}^{(i)}} \times [0, T] \right),
\ i\in \{0,1,2,3\},
$
respectively;
\item[{\bf C3.}]
the functions $\{\kappa_i(s,x_i,t)\}_{i=1}^3,$ $(s,x_i,t)\in \Bbb R \times[0,\ell_i]\times[0,T]$
are continuous in their domains of definition  and  have the partial derivatives with respect to $s,$
$k \in C^1(\Bbb R),$ $\kappa_0 \in C^1(\Bbb R),$ and
  there exists a positive constant $k_+$ such that
\begin{equation}\label{kappa_ineq}
0\leq k^\prime(s) \leq k_+, \quad  0\leq \kappa_0^\prime(s) \leq k_+, \quad
0\leq \partial_s \kappa_i(s,x_i,t)  \leq k_+
\quad \mbox{for} \quad s\in\Bbb{R}
\end{equation}
uniformly with respect to $x_i\in[0, \ell_i]$ and $t\in[0, T],$ respectively;
\begin{enumerate}
  \item
  if $\alpha_0 < 0,$ then in addition, the function $\kappa_0$ is a $C^2$-function with bounded derivatives, there exists  a constant $k_-$ such that
  $ 0< k_- \leq \kappa_0^\prime(s)$ for all $s\in\Bbb{R}$ and $\kappa_0(0)=0$ (so-called condition of
zero-absorption).
\end{enumerate}
\end{enumerate}

\medskip

Denote by
$\mathcal{H}_\varepsilon^*$ the dual space to the Sobolev space
$
\mathcal{H}_\varepsilon = \big\{
u \in H^1(\Omega_\varepsilon) : u|_{\Upsilon_\varepsilon^{(i)}(\ell_i)}=0, \ i=1,2,3
\big\}.
$
Recall that a function
$u_\varepsilon \in L^2 \left(0, T; \, \mathcal{H}_\varepsilon \right),$
with
$\partial_t u_\varepsilon \in L^2 \left(0, T; \, \mathcal{H}_\varepsilon^* \right),$
is called a weak solution to the problem~(\ref{probl}) if it satisfies the integral identity
\begin{multline}\label{int-identity}
     \int_{\Omega_\varepsilon}
     \partial_t u_\varepsilon \, v \, dx
 +   \int_{\Omega_\varepsilon}
     \nabla u_\varepsilon \cdot \nabla v \, dx
 +   \int_{\Omega_\varepsilon} k(u_\varepsilon) \, v \, dx
 +   \varepsilon^{\alpha_0}
     \int_{\Gamma_\varepsilon^{(0)}} \kappa_0(u_\varepsilon) \, v \, d\sigma_x
\\
 +   \sum_{i=1}^3 \varepsilon^{\alpha_i}
     \int_{\Gamma_\varepsilon^{(i)}} \kappa_i(u_\varepsilon, x_i, t) \, v \, d\sigma_x
 =   \int_{\Omega_\varepsilon} f \, v \, dx
 +  \sum_{i=0}^3 \varepsilon^{\beta_i}
     \int_{\Gamma_\varepsilon^{(i)}} \varphi_\varepsilon^{(i)} \, v \, d\sigma_x
\end{multline}
for any function $v\in \mathcal{H}_\varepsilon$ and a.e. $t\in(0, T),$ and $u_\varepsilon|_{t=0}=0.$
It is known (see e.g. \cite{Showalter-book}) that $u_\varepsilon\in C\big([0,T]; L^2(\Omega_\varepsilon)\big),$
and thus the equality $u_\varepsilon|_{t=0}=0$ makes sense.

\medskip

The aim of the present paper is to
\begin{itemize}
  \item
  construct the asymptotic approximation for the solution to the problem  (\ref{probl})
  as the parameter $\varepsilon \to 0;$
\item
   derive the corresponding limit problem $(\varepsilon =0);$
\item
prove the corresponding asymptotic estimates from which the influence of the
local geometric heterogeneity of the node $\Omega_\varepsilon^{(0)}$ and physical processes inside
will be observed;
\item
study the influence of the parameters $\{\alpha_i, \, \beta_i\}_{i=0}^3$
on the asymptotic behavior of the solution.
 \end{itemize}

\subsection{Comments to the statement}\label{coments}
 To our knowledge, the first works on the study of boundary-value problems for reaction-diffusion equations
were papers by Kolmogorov, Petrovskii, Piskunov \cite{KolPetPis} and Fisher \cite{Fisher}.
Standard assumptions for reaction terms of semilinear equations are as follows:
\begin{itemize}
  \item
$\exists\, C>0$ \ $\forall\, s_1, \, s_2 \in \Bbb R$: \  $|k(s_1) - k(s_2)| \le C |s_1 - s_2|;$
  \item
  $\exists\, C_1>0$ \ $C_2 \ge 0$ \ $\forall\, s \in \Bbb R$: \
$k(s) s \ge C_1 s^2 - C_2.$
\end{itemize}
This is sufficient for the existence and uniqueness of the weak solution.
However, many physical processes, especially in chemistry and medicine, have monotonous nature.
Therefore, it is naturally to impose special monotonous conditions for nonlinear terms.
In our case we propose simple conditions (\ref{kappa_ineq}) which are easy to verify.
For instance, the  functions
$$
k(s) = \lambda s + \cos s \quad (\lambda  \ge 1); \qquad
k(s)= \frac{\lambda s}{1 + \nu s} \quad (\lambda, \nu > 0)
$$
satisfy this condition. The last one corresponds to the Michaelis-Menten hypothesis in biochemical reactions
and to the Langmuir kinetics adsorption models (see \cite{Conca,Pao}).

From conditions (\ref{kappa_ineq}) it follows  the following inequalities:
\begin{equation}\label{kappa_ineq+}
\begin{array}{c}
k(0) s \leq k (s) s \leq k_+ \ s^2 + k(0) s, \qquad
\kappa_0(0) s \leq \kappa_0 (s) s \leq k_+ \, s^2 + \kappa_0(0) s,
\\[2mm]
\kappa_i(0, x_i, t) s \leq \kappa_i (s, x_i, t) s \leq k_+ \, s^2 + \kappa_i(0, x_i, t) s,
 \quad \mbox{for} \quad s\in\Bbb{R}
\end{array}
\end{equation}
uniformly with respect to $(x_i,t)\in[0, \ell_i]\times[0, T],$ respectively; $i=1,2,3.$
For the case ${\bf C3}({\rm a})$  we have
\begin{equation}\label{kappa_ineq++}
k_- \, s^2 \leq \kappa_0 (s) s \leq k_+ \, s^2 \qquad \forall \, s\in \Bbb R.
\end{equation}
Doubtless both the function $k$ and $\kappa_0$ may also depend on $x$ and $t.$
However, we have omitted this dependence to avoid cumbersome formulas, leaving it only for the functions $\{\kappa_i\}_{i=1}^3.$

As will be seen from further calculations in the case when some parameter $\alpha_i >1,$ the
condition (\ref{kappa_ineq}) for the corresponding function $\kappa_i$ can be weakened.
In this case it is sufficient that $\kappa_i$  is continuous
and there exist  constants $c_1>0, \ c_2\ge0$ such that
for any $s_1,\,s_2, \, s\in\Bbb{R},$ $ x_i\in [0, \ell_i],$ $ t\in[0, T]:$
$$
 \big(\kappa_i(s_1,x_i,t)-\kappa_i(s_2,x_i,t)\big)(s_1 - s_2) \geq 0, \qquad \big|\kappa_i(s,x_i,t) \big| \leq c_1 \big(1+|s|\big),\qquad \kappa_i(s,x_i,t) s \geq - c_2.
 $$

It should be noted here that the asymptotic behaviour of solutions to the reaction-diffusion equation in different kind of thin domains
with the uniform Neumann conditions was studied in \cite{MarRyb,ArCaPeSi}. The convergence theorems were proved under the following
assumptions for the reaction term $k$:
in \cite{ArCaPeSi} it is a $C^2$-function with bounded derivatives and
\begin{equation}\label{disip-cond}
\liminf_{|s|\to+\infty} \frac{k(s)}{s} > 0;
\end{equation}
in \cite{MarRyb} it is  a $C^1$-function, $|k'(s)| \le C (1 + |s|^q),$ where $q\in (0,+\infty),$  and
the dissipative condition (\ref{disip-cond}) is satisfied. It is easy to see that from (\ref{kappa_ineq++}) it follows (\ref{disip-cond}).


In a typical interpretation the solution to the problem (\ref{probl}) denotes the density of
some quantity (temperature, chemical concentration, the potential of a vector-field, etc.)
within the thin star-shaped junction $\Omega_\varepsilon.$
The nonlinear Robin boundary conditions considered in this problem mean that there is
some interaction between the surrounding density and the density just inside $\Omega_\varepsilon.$
 It is evident from the results we have presented that these conditions
(essentially the condition at the boundary of the node)  have a substantial influence on the asymptotic behavior of
the solution. To study this influence, we introduce special intensity factors
$\varepsilon^{\alpha_i}, \ i\in\{0, 1, 2, 3\}.$ Since in this paper we are more interested in the study
of the boundary interactions at the node, we take the parameter $\alpha_0$ from $\Bbb R$ and the other ones from $[1, +\infty).$ The case when
$ \alpha_i < 1 \ (i\in\{1, 2, 3\})$ is only discussed in Sec.~\ref{comments}.

\section{Existence and uniqueness of the weak solution}\label{exis-uniq}

In order to obtain the operator statement for the problem~(\ref{probl}) we introduce the new norm
$\| \cdot \|_\varepsilon$ in $\mathcal{H}_\varepsilon$, which is generated by the scalar product
$$
(u,v)_\varepsilon = \int_{\Omega_\varepsilon} \nabla{u} \cdot \nabla{v} \, dx,
\quad u,v \in \mathcal{H}_\varepsilon.
$$
Due to the uniform Dirichlet condition on $\Upsilon_\varepsilon^{(i)}(\ell_i), \ i=1,2,3,$
the norm $\| \cdot \|_\varepsilon$ and the ordinary norm $\| \cdot \|_{H^1(\Omega_\varepsilon)}$
are uniformly equivalent, {\it i.e.}, there exist constants $C_1>0$ and $\varepsilon_0>0$
such that for all $\varepsilon\in(0, \varepsilon_0)$ and for all $u\in\mathcal{H}_\varepsilon$
the following estimate hold:
\begin{equation}\label{equivalent_norm}
\| u \|_\varepsilon \leq \| u \|_{H^1(\Omega_\varepsilon)} \leq C_1 \| u \|_\varepsilon.
\end{equation}

\begin{remark}
Here and in what follows all constants $\{C_j\}$ and $\{c_j\}$
in inequalities are independent of the parameter $\varepsilon.$
\end{remark}

Further we will often use the inequalities
\begin{gather}\label{ineq1}
      \varepsilon \int_{\Gamma_\varepsilon^{(i)}} v^2 \, d\sigma_x
 \leq C_2 \Bigg( \varepsilon^2 \int_{\Omega_\varepsilon^{(i)}} |\nabla_{x}v|^2 \, dx
 +    \int_{\Omega_\varepsilon^{(i)}} v^2 \, dx \Bigg),
 \\
\int_{\Omega_\varepsilon^{(i)}} v^2 \, dx \le C_3 \Bigg( \varepsilon^2 \int_{\Omega_\varepsilon^{(i)}} |\nabla_{x}v|^2 \, dx +
  \varepsilon \int_{\Gamma_\varepsilon^{(i)}} v^2 \, d\sigma_x\Bigg)
\qquad \forall \, v\in H^1(\Omega_\varepsilon^{(i)}), \ \ (i\in \{1, 2, 3\})\label{ineq-2}
\end{gather}
proved in \cite{M-MMAS-2008}.  Let us prove similar inequalities for the node $(i=0).$

\begin{proposition} Let $Q$ be a bounded domain in $\Bbb{R}^3$ with the smooth boundary $\partial Q.$
Then there exists a positive constant $C_2>0$ that is independent of $\varepsilon$ such that for any function $v$
from the space $H^1(Q_\varepsilon)$ the following inequalities hold:
\begin{equation}\label{traceineq0}
      \varepsilon \int\limits_{\partial Q_\varepsilon} v^2 \, d\sigma_x
 \leq C_2 \Bigg( \varepsilon^2 \int\limits_{Q_\varepsilon} |\nabla_{x}v|^2 \, dx
 +    \int\limits_{Q_\varepsilon} v^2 \, dx \Bigg) \quad \text{and} \quad
\int\limits_{Q_\varepsilon} v^2 \, dx \le C_3 \Bigg( \varepsilon^2 \int\limits_{Q_\varepsilon} |\nabla_{x}v|^2 \, dx +
  \varepsilon \int\limits_{\partial Q_\varepsilon} v^2 \, d\sigma_x\Bigg),
\end{equation}
where $Q_\varepsilon:= \varepsilon\,Q$ is the homothetic transformation with the coefficient $\varepsilon$ of  $Q.$
\end{proposition}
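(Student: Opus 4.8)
\emph{Proof plan.} The plan is to reduce both inequalities to fixed-domain estimates on $Q$ by the homothety $x = \varepsilon\xi$ and then invoke standard Sobolev-type inequalities on the $\varepsilon$-independent domain $Q$. First I would set $w(\xi) := v(\varepsilon\xi)$ for $\xi\in Q$; since $v\in H^1(Q_\varepsilon)$ we have $w\in H^1(Q)$, and the elementary scaling relations $dx = \varepsilon^3\, d\xi$, $d\sigma_x = \varepsilon^2\, d\sigma_\xi$, $\nabla_x v = \varepsilon^{-1}\nabla_\xi w$ give
$$\int_{Q_\varepsilon} v^2\, dx = \varepsilon^3 \int_{Q} w^2\, d\xi, \qquad \varepsilon^2\int_{Q_\varepsilon} |\nabla_x v|^2\, dx = \varepsilon^3 \int_{Q} |\nabla_\xi w|^2\, d\xi, \qquad \varepsilon\int_{\partial Q_\varepsilon} v^2\, d\sigma_x = \varepsilon^3\int_{\partial Q} w^2\, d\sigma_\xi .$$
After cancelling the common factor $\varepsilon^3$, the two claimed inequalities become, respectively, $\int_{\partial Q} w^2\, d\sigma_\xi \le C_2\big(\int_{Q} |\nabla_\xi w|^2\, d\xi + \int_{Q} w^2\, d\xi\big)$ and $\int_{Q} w^2\, d\xi \le C_3\big(\int_{Q} |\nabla_\xi w|^2\, d\xi + \int_{\partial Q} w^2\, d\sigma_\xi\big)$, with constants that no longer involve $\varepsilon$.

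The first of these is precisely the boundedness of the trace operator $H^1(Q)\to L^2(\partial Q)$, which holds because $\partial Q$ is smooth (Lipschitz already suffices), and its norm serves as $C_2$. The second is a Poincar\'e-type inequality with a boundary term; I would establish it by the standard compactness–contradiction argument: if it failed, there would exist $w_n\in H^1(Q)$ with $\|w_n\|_{L^2(Q)} = 1$ while $\|\nabla_\xi w_n\|_{L^2(Q)}^2 + \|w_n\|_{L^2(\partial Q)}^2 \to 0$; then $(w_n)$ is bounded in $H^1(Q)$, so by Rellich--Kondrachov a subsequence converges in $L^2(Q)$ and weakly in $H^1(Q)$ to some $w$ with $\nabla_\xi w = 0$, hence $w$ is a constant with $\|w\|_{L^2(Q)} = 1$; continuity of the trace forces $\|w\|_{L^2(\partial Q)} = 0$, so that constant vanishes, a contradiction. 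This yields the required $\varepsilon$-independent $C_3$.

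The only step beyond routine bookkeeping is the boundary Poincar\'e inequality, and it is settled by the compactness argument just described; the rest is the scaling computation. Reversing the substitution then gives exactly the two displayed estimates of the proposition, with the same constants $C_2$, $C_3$, completing the proof.
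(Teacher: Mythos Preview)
Your proof is correct and follows essentially the same approach as the paper: both reduce to the fixed domain $Q$ via the homothety $x=\varepsilon\xi$, track the scaling factors $dx=\varepsilon^3\,d\xi$, $d\sigma_x=\varepsilon^2\,d\sigma_\xi$, $\nabla_x=\varepsilon^{-1}\nabla_\xi$, and then invoke the standard trace inequality on $Q$ to obtain $\varepsilon$-independent constants. The paper only remarks that the second inequality follows ``by the same arguments,'' whereas you supply the Poincar\'e-with-boundary-term estimate via a compactness--contradiction argument, which is a perfectly standard and acceptable way to fill in that detail.
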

\begin{proof}
Let $r(\mathfrak{s}) = \big( r_1(\mathfrak{s}), \ r_2(\mathfrak{s}), \ r_3(\mathfrak{s}) \big),
\ \mathfrak{s}\in \mathfrak{S}\subset\Bbb{R}^2,$ be a smooth parametrisation of $\partial Q.$
Then $r_\varepsilon := \varepsilon r =
\big( \varepsilon r_1, \ \varepsilon r_2, \ \varepsilon r_3 \big)$
is the  parametrization of $\partial Q_\varepsilon.$ Denote by $\rho(r) := \sqrt{EG-F^2 \,},$ where
$
E = \sum_{i=1}^3 \left(\frac{\partial r_i}{\partial \mathfrak{s}_1}\right)^2, \
G = \sum_{i=1}^3 \left(\frac{\partial r_i}{\partial \mathfrak{s}_2}\right)^2, \
F = \sum_{i=1}^3       \frac{\partial r_i}{\partial \mathfrak{s}_1}
                       \frac{\partial r_i}{\partial \mathfrak{s}_2}.
$
Then $\rho(r_\varepsilon) = \varepsilon^2 \rho(r).$
Using definition of the surface integral, we get
\begin{equation}\label{trace1}
   \int_{\partial Q_\varepsilon} v^2(x) \, d\sigma_x
 = \int_\mathfrak{S} v^2 \big(r_\varepsilon(\mathfrak{s})\big)
   \rho\big(r_\varepsilon(\mathfrak{s})\big) \, d\mathfrak{s}
 = \varepsilon^2 \int_\mathfrak{S} v^2 \big(\varepsilon r(\mathfrak{s})\big)
   \rho\big(r(\mathfrak{s})\big) \, d\mathfrak{s}
 = \varepsilon^2 \int_{\partial Q} v_\varepsilon^2(\xi) \, d\sigma_{\xi}
\end{equation}
for all $v\in H^1(Q_{\varepsilon}),$
where $v_\varepsilon(\xi) := v(\varepsilon\xi), \ \xi=(\xi_1, \xi_2, \xi_3),$
and $x=\varepsilon\xi.$

Taking into account the boundedness of the trace operator, i.e.,
\begin{equation*}
\exists c_0>0: \qquad \| v_\varepsilon \|_{L^2(\partial Q)} \le c_0 \| v_\varepsilon \|_{H^1(Q)},
\end{equation*}
where constant $c_0$ does not depend on $v_\varepsilon,$ and the
equality
$$
\varepsilon^3
   \left(
    \int_{Q} |\nabla_{\xi} v_\varepsilon|^2 \, d\xi
 +  \int_{Q} v_\varepsilon^2 \, d\xi
   \right)
 = \varepsilon^2 \int_{Q_\varepsilon} |\nabla_{x} v|^2 \, dx
 + \int_{Q_\varepsilon} v^2 \, dx,
$$
we obtain the first inequality in (\ref{traceineq0}). By the same arguments we can prove the second one.
\end{proof}

It is easy to prove  the inequality
$$
\int_{\Omega_\varepsilon^{(0)}} v^2 \, dx \le C_4 \varepsilon \Bigg( \int_{\Omega_\varepsilon} |\nabla_{x}v|^2 \, dx +
  \int_{\Upsilon_\varepsilon^{(i)}(\ell_i)} v^2 \, d\overline{x}_i\Bigg)
$$
and then with the help of the first inequality in (\ref{traceineq0}) the following one:
\begin{equation}\label{ineq-3}
\int_{\Gamma_\varepsilon^{(0)}} v^2 \, d\sigma_x
 \leq C_5 \Bigg( \int_{\Omega_\varepsilon} |\nabla_{x}v|^2 \, dx
 +    \int_{\Upsilon_\varepsilon^{(i)}(\ell_i)} v^2 \, d\overline{x}_i\Bigg)
\end{equation}
for all $v\in H^1(\Omega_\varepsilon)$ and $ i\in \{1, 2, 3\}.$

Define a nonlinear operator
$\mathcal{A}_\varepsilon(t): \mathcal{H}_\varepsilon \longrightarrow \mathcal{H}_\varepsilon^*$
through the relation
\begin{multline*}
     \big\langle \mathcal{A}_\varepsilon(t){u}, v \big\rangle_\varepsilon
 =   \int_{\Omega_\varepsilon}
     \nabla u \cdot \nabla v \, dx
 +   \int_{\Omega_\varepsilon} k(u) v \, dx
\\
 +   \varepsilon^{\alpha_0}
     \int_{\Gamma_\varepsilon^{(0)}} \kappa_0(u) v \, d\sigma_x
 +   \sum_{i=1}^3 \varepsilon^{\alpha_i}
     \int_{\Gamma_\varepsilon^{(i)}} \kappa_i(u, x_i, t) v \, d\sigma_x
\qquad \forall u,v\in\mathcal{H}_\varepsilon,
\end{multline*}
and the linear functional $\mathcal{F}_\varepsilon(t)\in\mathcal{H}_\varepsilon^*$
by the formula
$$
     \big\langle \mathcal{F}_\varepsilon(t), v \big\rangle_\varepsilon
 =   \int_{\Omega_\varepsilon} f \,v \, dx
 +   \sum_{i=0}^3 \varepsilon^{\beta_i}
     \int_{\Gamma_\varepsilon^{(i)}} \varphi_\varepsilon^{(i)} \, v \, d\sigma_x
\qquad \forall v\in \mathcal{H}_\varepsilon,
$$
for a.e. $t\in(0,T),$
where $\langle \cdot, \cdot \rangle_\varepsilon$ is the duality pairing of $\mathcal{H}_\varepsilon^*$
and $\mathcal{H}_\varepsilon$.

Then the integral identity (\ref{int-identity}) can be rewritten as follows
\begin{equation}\label{oper-equat}
     \big\langle {\partial_t u_\varepsilon}, v \big\rangle_\varepsilon
 +   \big\langle \mathcal{A}_\varepsilon(t){u_\varepsilon}, v \big\rangle_\varepsilon
 =   \big\langle \mathcal{F}_\varepsilon(t), v \big\rangle_\varepsilon
\qquad \forall v\in \mathcal{H}_\varepsilon,
\end{equation}
for a.e. $t\in(0, T),$ and $u_\varepsilon|_{t=0}=0.$

To prove the well-posedness result, we verify some properties of the operator $\mathcal{A}_\varepsilon$ for a fixed value of $\varepsilon.$
\begin{enumerate}
\item
 With the help of (\ref{kappa_ineq+}) and Cauchy's inequality with
 $\delta>0 \ (ab \leq \delta a^2 + \tfrac{b^2}{4\delta}),$
 we obtain
 $$
      \big\langle \mathcal{A}_\varepsilon(t){v}, v \big\rangle_\varepsilon
 $$
 $$
 \geq \int_{\Omega_\varepsilon} | \nabla v |^2 \,dx
 +    \int_{\Omega_\varepsilon} k(0) \, v \,dx
 +    \varepsilon^{\alpha_0}
      \int_{\Gamma_\varepsilon^{(0)}} \kappa_0(0) \, v \,d\sigma_x
 +    \sum\limits_{i=1}^3 \varepsilon^{\alpha_i}
      \int_{\Gamma_\varepsilon^{(i)}} \kappa_i(0, x_i, t) \, v \,d\sigma_x
 $$
 $$
 \geq \| v \|_\varepsilon^2
 -    \delta \left(
      \int_{\Omega_\varepsilon} v^2 \,dx
 +    \int_{\Gamma_\varepsilon^{(0)}} v^2 \,d\sigma_x
 +    \varepsilon \sum\limits_{i=1}^3
      \int_{\Gamma_\varepsilon^{(i)}} v^2 \,d\sigma_x
             \right)
 $$
\begin{equation}\label{coercc}
 -    \frac{1}{4\delta}
      \left(
       |k(0)|^2 \, |\Omega_\varepsilon|_3
 +     \varepsilon^{2\alpha_0}
       |\kappa_0(0)|^2 \, |\Gamma_\varepsilon^{(0)}|_2
 +     \sum\limits_{i=1}^3 \varepsilon^{2\alpha_i-1}
       \max\limits_{[0, \ell_i]\times[0,T]}
       |\kappa_i(0, x_i, t)|^2 \, |\Gamma_\varepsilon^{(i)}|_2
      \right).
\end{equation}
 Here and in what follows  $|S|_n$ is the $n$-dimensional Lebesgue measure of a set $S.$
 Then using (\ref{equivalent_norm}), (\ref{ineq1}), (\ref{ineq-3}) and recalling the assumption  ${\bf C3}$(a), we can select appropriate $\delta$ such that
 \begin{equation*}
   \big\langle \mathcal{A}_\varepsilon (t) v, v \big\rangle_\varepsilon
 \geq C_6 \| v \|_\varepsilon^2 - C_7 \varepsilon^2
      \bigg( 1+ \varepsilon^{2\alpha_0}+ \sum_{i=1}^3\varepsilon^{2\alpha_i-2}\bigg)
\quad \forall v\in\mathcal{H}_\varepsilon.
\end{equation*}
This inequality means that the operator $\mathcal{A}_\varepsilon$ is coercive for a.e. $t\in(0, T).$
 \item
 Let us show that it is strongly monotone for a.e. $t\in(0, T).$
 Taking into account (\ref{kappa_ineq}), we get
 $$
      \big\langle
       \mathcal{A}_\varepsilon(t){u_1} - \mathcal{A}_\varepsilon(t){u_2}, \ u_1 - u_2
      \big\rangle_\varepsilon
 \geq \| u_1 - u_2 \|_\varepsilon^2 \qquad \forall u_1, u_2\in\mathcal{H}_\varepsilon.
 $$
\item
 The operator $\mathcal{A}_\varepsilon$ is hemicontinuous for a.e. $t\in(0, T).$ Indeed, the real valued function
 $$
 [0,1] \ni \tau \to
 \big\langle \mathcal{A}_\varepsilon[u_1 + \tau v], u_2 \big\rangle_\varepsilon
 $$
 is continuous on $[0,1]$ for all fixed $u_1, u_2, v \in \mathcal{H}_\varepsilon$ due to the continuity
 of the functions $k, \ \{\kappa_i\}_{i=0}^3$ and Lebesque's dominated convergence theorem.
\item
 Let us prove that operator $\mathcal{A}_\varepsilon$ is bounded. Using Cauchy-Bunyakovsky integral
 inequality, (\ref{equivalent_norm}) and (\ref{kappa_ineq+}), we deduce the following inequality:
 $$
      \big\langle \mathcal{A}_\varepsilon{u}, v \big\rangle_\varepsilon
 \leq \int_{\Omega_\varepsilon} \nabla u \cdot \nabla v \,dx
 +    \int_{\Omega_\varepsilon}
      \big( k_+ \, |u| + |k(0)| \big) |v| \,dx
 $$
 $$
 +    \varepsilon^{\alpha_0}
      \int_{\Gamma_\varepsilon^{(0)}}
      \big( k_+ \, |u| + |\kappa_0(0)| \big) |v| \,d\sigma_x
 +    \sum\limits_{i=1}^3 \varepsilon^{\alpha_i}
      \int_{\Gamma_\varepsilon^{(i)}}
      \big( k_+ \, |u| + |\kappa_i(0, x_i, t)| \big) |v| \,d\sigma_x
 $$
 $$
 \leq \| u \|_\varepsilon \| v \|_\varepsilon
 +    k_+ \, \| u \|_{L^2(\Omega_\varepsilon)}
      \| v \|_{L^2(\Omega_\varepsilon)}
 +    k_+ \sum\limits_{i=0}^3 \varepsilon^{\alpha_i}
      \| u \|_{L^2(\Gamma_\varepsilon^{(i)})}
             \| v \|_{L^2(\Gamma_\varepsilon^{(i)})}
 +    |k(0)| \sqrt{|\Omega_\varepsilon|_3} \
      \| v \|_{L^2(\Omega_\varepsilon)}
 $$
 \begin{equation}\label{bound}
  +    \varepsilon^{\alpha_0}
      |\kappa_0(0)| \sqrt{\big|\Gamma_\varepsilon^{(0)}\big|_2 \, } \,
      \| v \|_{L^2(\Gamma_\varepsilon^{(0)})}
 +    \sum\limits_{i=1}^3 \varepsilon^{\alpha_i}
      \max\limits_{[0, \ell_i]\times[0,T]}
      |\kappa_i(0, x_i, t)| \sqrt{\big|\Gamma_\varepsilon^{(i)}\big|_2 \, } \,
      \| v \|_{L^2(\Gamma_\varepsilon^{(i)})}.
 \end{equation}
Now with the help of (\ref{ineq1}) and (\ref{ineq-3}), we obtain
 $$
      \big\langle \mathcal{A}_\varepsilon{u}, v \big\rangle_\varepsilon
 \leq C_8 \bigg( 1+ \varepsilon^{\alpha_0}+ \sum_{i=1}^3\varepsilon^{\alpha_i -1}\bigg)
      \Big( \varepsilon + \| u \|_\varepsilon \Big) \| v \|_\varepsilon
 \qquad \forall \, u,v\in\mathcal{H}_\varepsilon \quad \text{and a.e.} \ t\in(0, T).
 $$
\end{enumerate}

Thus, the existence and uniqueness of the weak solution for every fixed value $\varepsilon$
follow directly from Corollary 4.1
(see \cite[Chapter 3]{Showalter-book}).

\subsection{A priori estimates}\label{a-priori_es}
Taking into account (\ref{coercc}),  (\ref{ineq1}) and (\ref{ineq-3}),   we derive from (\ref{oper-equat})  that
\begin{gather*}
\frac12\int_{\Omega_\varepsilon} u_\varepsilon^2(x, \tau) \, dx
 + c_3 \int_0^\tau\|u\|_\varepsilon \, dt - c_4 \varepsilon^2 \tau \bigg(
    |k(0)|^2 + \varepsilon^{2\alpha_0}|\kappa_0(0)|^2
 +  \sum_{i=1}^3\varepsilon^{2\alpha_i-2}
    \max\limits_{[0, \ell_i]\times[0,T]} |\kappa_i(0, x_i, t)|^2
   \bigg)
\\
\le \delta \int_0^\tau\|u\|^2_\varepsilon \, dt
 + \frac{c_5}{4 \delta} \int_0^\tau\|f\|_{L^2(\Omega_\varepsilon)}^2 dt
 + \frac{\varepsilon^{2 \beta_0} c_6}{4 \delta}
   \int_0^\tau\|\varphi_\varepsilon^{(0)}\|_{L^2(\Gamma_\varepsilon^{(0)})}^2 dt
 + \sum_{i=1}^3  \frac{\varepsilon^{2 \beta_i -1} c_7}{4 \delta}
   \int_0^\tau\|\varphi_\varepsilon^{(i)}\|_{L^2(\Gamma_\varepsilon^{(i)})}^2 dt
\end{gather*}
for any $\tau \in (0, T].$ Selecting appropriate $\delta >0$ and taking the conditions $\bf{C1}$ -- $\bf{C3}(a)$ into account, we obtain the uniform estimate
\begin{gather}
\max_{t\in [0,T]} \|u_{\varepsilon}(\cdot,t)\|_{L^2(\Omega_\varepsilon)} + \|u_{\varepsilon}\|_{L^2(0,T; \mathcal{H}_\varepsilon)} \notag
\\
 \leq C_0
      \bigg(
       \sqrt{T} \varepsilon
       \Big(
       |k(0)| + \varepsilon^{\alpha_0}|\kappa_0(0)|
 +     \sum_{i=1}^3\varepsilon^{\alpha_i-1}
       \max\limits_{[0, \ell_i]\times[0,T]} |\kappa_i(0, x_i, t)|
       \Big) \notag
\\
 +     \, \|f\|_{L^2(\Omega_\varepsilon \times (0,T))}
 +     \varepsilon^{\beta_0}
       \|\varphi_\varepsilon^{(0)}\|_{L^2(\Gamma_\varepsilon^{(0)}\times (0,T))}
 +     \sum\limits_{i=1}^{3} \varepsilon^{\beta_i - \frac12}
       \|\varphi_\varepsilon^{(i)}\|_{L^2(\Gamma_\varepsilon^{(i)}\times (0,T))}
      \bigg)
\le   C_1 \varepsilon   \label{apr_ocin}
\end{gather}
for all values of the parameters $\{\alpha_i\}_{i=0}^3$ and $\beta_0 \ge 0,  \ \beta_i \ge 1, \ i\in \{1, 2, 3\}.$

Now let us consider the case   ${\bf C3}({\rm a})$ $(\alpha_0 < 0).$
From the integral identity (\ref{int-identity}) and inequalities (\ref{kappa_ineq+}),  (\ref{kappa_ineq++}), (\ref{ineq1}), (\ref{equivalent_norm}), (\ref{ineq-3}) and (\ref{apr_ocin})  it follows
\begin{multline*}
\varepsilon^{\alpha_0} \int_{\Gamma_\varepsilon^{(0)}\times(0,T)} u_\varepsilon^2\, d\sigma_x dt
\le
C_1
    \bigg(
     \sqrt{T} \varepsilon
     \Big(
      |k(0)|
 +    \sum_{i=1}^3\varepsilon^{\alpha_i-1}
      \max\limits_{[0, \ell_i]\times[0,T]} |\kappa_i(0, x_i, t)|
     \Big)
\\
\|f\|_{L^2(\Omega_\varepsilon \times (0,T))}
+  \varepsilon^{\beta_0}
     \|\varphi_\varepsilon^{(0)}\|_{L^2(\Gamma_\varepsilon^{(0)}\times (0,T))} +
\sum\limits_{i=1}^{3} \varepsilon^{\beta_i - \frac12} \|\varphi_\varepsilon^{(i)}\|_{L^2(\Gamma_\varepsilon^{(i)}\times (0,T)) }\Big)
\|u_\varepsilon\|_{L^2(0,T; \mathcal{H}_\varepsilon)}
\le C_2 \varepsilon^2.
\end{multline*}
Now with the help of (\ref{traceineq0}) we get
\begin{equation}\label{est-alpha_1}
 \int_{\Omega_\varepsilon^{(0)}\times (0,T)} u_\varepsilon^2\, dx dt \le
 C_3 \Bigg( \varepsilon^2 \int_{\Omega_\varepsilon^{(0)}\times(0,T)} |\nabla_{x}u_\varepsilon|^2 \, dx dt+
  \varepsilon^{1 -\alpha_0} \varepsilon^{\alpha_0} \int_{\Gamma_\varepsilon^{(0)}\times(0,T)} u_\varepsilon^2 \, d\sigma_x dt\Bigg) \le C_4 \varepsilon^\vartheta,
\end{equation}
where $\vartheta:= \min \{4, \ 3 - \alpha_0\}.$ This means that
\begin{equation}\label{est-alpha_0}
\frac{1}{\varepsilon^3} \int_{\Omega_\varepsilon^{(0)}\times(0,T)} u_\varepsilon^2\, dx dt \le C_5\, \varepsilon^{\min\{1,  - \alpha_0\}} \longrightarrow 0\quad \text{as} \ \ \varepsilon \to 0.
\end{equation}

\section{Formal asymptotic approximation. The case $\alpha_0 \ge 0,$ $\alpha_i \ge 1,$ $ i\in\{1, 2, 3\}.$}\label{Formal asymptotics}

In this section we assume that  the functions
$f, \ k, \ \{\varphi_{\varepsilon}^{(i)}, \ \kappa_i\}_{i=0}^3$ are smooth enough.
Following the approach of \cite{Mel_Klev_AA-2016},
we propose ansatzes of the asymptotic approximation  for the solution to the problem~(\ref{probl})
in the following form:
\begin{enumerate}
  \item
 the regular parts of the approximation
  \begin{equation}\label{regul}
    \omega_0^{(i)} (x_i, t)
 +  \varepsilon \omega_1^{(i)} (x_i, t)
 +  \varepsilon^{2} u_2^{(i)} \left( \dfrac{\overline{x}_i}{\varepsilon}, x_i, t \right)
 +  \varepsilon^{3} u_3^{(i)} \left( \dfrac{\overline{x}_i}{\varepsilon}, x_i, t \right)
\end{equation}
is located inside of each thin cylinder $\Omega^{(i)}_\varepsilon$ 
and their terms  depend both on the corresponding longitudinal variable $x_i$ and so-called
``fast variables'' $\dfrac{\overline{x}_i}{\varepsilon} \ (i=1,2,3);$
  \item
and the inner part of the approximation
  \begin{equation}\label{junc}
    N_0\left(\frac{x}{\varepsilon}, t \right)
 +  \varepsilon N_1\left(\frac{x}{\varepsilon}, t \right)
 +  \varepsilon^2 N_2\left(\frac{x}{\varepsilon}, t \right)
\end{equation}
is located in a neighborhood of the node $\Omega^{(0)}_\varepsilon$.
\end{enumerate}

\subsection{Regular parts}\label{regul_asymp}
Substituting the representation (\ref{regul}) for each fixed index $i \in \{1, 2, 3\}$
into the differential equation  of the problem~(\ref{probl}),
using  Taylor's formula for the function $f$
at the point $\overline{x}_i=(0,0)$ for the function $k$ at  $\omega_0^{(i)},$ and collecting coefficients at $\varepsilon^0$, we obtain
\begin{equation}\label{eq-1}
 -   \Delta_{\overline{\xi}_i}{u}_{2}^{(i)} (\overline{\xi}_i, x_i, t)
 =-  \frac{\partial\omega_0^{(i)}}{\partial{t}} (x_i, t)
 +   \frac{\partial^2\omega_0^{(i)}}{\partial{x_i}^2} (x_i, t)
 -   k\Big(\omega_0^{(i)}(x_i, t)\Big)
 +   f_0^{(i)}(x_i, t),
\end{equation}
where $\overline{\xi}_i = \dfrac{\overline{x}_i}{\varepsilon}$ and
$f_0^{(i)}(x_i, t):=  f(x, t)|_{\overline{x}_i=(0,0)}.$

It is easy to calculate the outer unit normal to $\Gamma^{(i)}_\varepsilon:$
$$
{\boldsymbol{\nu}}^{(i)} (x_i, \ \overline{\xi}_i)
 =  \dfrac{1}{\sqrt{1 + \varepsilon^2 |h_i^\prime (x_i)|^2 \,}}
    \big( -\varepsilon h_i^\prime (x_i), \ \overline{\nu}_i (\overline{\xi}_i) \big)
 =
\left\{\begin{array}{lr}
    \dfrac{ \big(
   -\varepsilon h_1^\prime (x_1), \
    {\nu}_2^{(1)} (\overline{\xi}_1), \
    {\nu}_3^{(1)} (\overline{\xi}_1)
    \big) }{\sqrt{1 + \varepsilon^2 |h_1^\prime (x_1)|^2 \,}},
    \ & i=1,
\\
    \dfrac{ \big(
    {\nu}_1^{(2)} (\overline{\xi}_2), \
   -\varepsilon h_2^\prime (x_2), \
    {\nu}_3^{(2)} (\overline{\xi}_2)
    \big) }{\sqrt{1 + \varepsilon^2 |h_2^\prime (x_2)|^2 \,}},
    \ & i=2,
\\
    \dfrac{ \big(
    {\nu}_1^{(3)} (\overline{\xi}_3), \
    {\nu}_2^{(3)} (\overline{\xi}_3), \
   -\varepsilon h_3^\prime (x_3)
    \big) }{\sqrt{1 + \varepsilon^2 |h_3^\prime (x_3)|^2 \,}},
    \ & i=3,
\end{array}\right.
$$
where $\overline{\nu}_i (\frac{\overline{x}_i}{\varepsilon})$ is the outward normal for the disk
${\Upsilon_{\varepsilon}^{(i)} (x_i)}:=\{ \overline{\xi}_i\in\Bbb{R}^2 \ : \ |\overline{\xi}_i|< h_i(x_i) \}.$

Taking the view of the outer unit normal into account and putting the sum (\ref{regul})
into  the third  relation of the problem (\ref{probl}), we get
with the help of Taylor's formula for the function $\kappa_i$ the following relation:
\begin{equation}\label{eq-2}
    \varepsilon
    \partial_{\overline{\nu}_i (\overline{\xi}_i)}{u}_{2}^{(i)} (\overline{\xi}_i, x_i, t)
 =  \varepsilon \,h_i^\prime(x_i) \, \frac{\partial\omega_{0}^{(i)}}{\partial{x_i}} (x_i, t)
 -  \varepsilon^{\alpha_i} \kappa_i\Big(\omega_0^{(i)}(x_i, t), x_i, t \Big)
 +  \varepsilon^{\beta_i} \varphi^{(i)}(\overline{\xi}_i, x_i, t).
\end{equation}

Relations (\ref{eq-1}) and (\ref{eq-2}) form the linear inhomogeneous Neumann  boundary-value problem
\begin{equation}\label{regul_probl_2}
\left\{\begin{array}{rcl}
 -  \Delta_{\overline{\xi}_i}{u}_{2}^{(i)} (\overline{\xi}_i, x_i, t)
 & = &
 -  \dfrac{\partial\omega_0^{(i)}}{\partial{t}} (x_i, t)
 +  \dfrac{\partial^2\omega_0^{(i)}}{\partial{x_i}^2} (x_i, t)
 -  k\Big(\omega_0^{(i)}(x_i, t)\Big)
 +  f_0^{(i)} (x_i, t),
 \qquad \overline{\xi}_i\in\Upsilon_i (x_i),
\\[2mm]
    \partial_{\boldsymbol{\nu}_{\overline{\xi}_i}}{u}_{2}^{(i)}(\overline{\xi}_i, x_i, t)
 & = &
    h_i^\prime (x_i)\dfrac{\partial\omega_0^{(i)}}{\partial{x_i}} (x_i, t)
 -  \delta_{\alpha_i, 1} \, \kappa_i\Big(\omega_0^{(i)}(x_i, t), x_i, t \Big)
 +  \delta_{\beta_i, 1} \, \varphi^{(i)} (\overline{\xi}_i, x_i, t),
\\
 &   & \hspace{9.31cm} \overline{\xi}_i\in\partial\Upsilon_i(x_i),
\\
    \langle u_2^{(i)} (\, \cdot \, , x_i, t) \rangle_{\Upsilon_i (x_i)}
 & = &
    0,
\end{array}\right.
\end{equation}
to define $u_2^{(i)}.$
Here $\langle u(\, \cdot \, , x_i, t) \rangle_{\Upsilon_i(x_i)} :=  \int_{\Upsilon_i(x_i)}u (\overline{\xi}_i, x_i, t)d{\overline{\xi}_i},$ \
the variables $(x_i, t)$ are regarded as parameters from $I_\varepsilon^{(i)} \times (0, T),$
where $I_\varepsilon^{(i)} :=\{x: \ x_i\in  (\varepsilon \ell_0, \, \ell_i), \ \overline{x_i}=(0,0)\}.$
We add the third relation in (\ref{regul_probl_2}) for the uniqueness of a solution.

Writing down the necessary and sufficient conditions for the solvability
of the problem (\ref{regul_probl_2}), we derive the differential equation
\begin{multline}\label{omega_probl_2}
    \pi h_i^2(x_i) \dfrac{\partial\omega_0^{(i)}}{\partial{t}} (x_i, t)
 -  \pi \dfrac{\partial}{\partial{x_i}}
    \left(
     h_i^2(x_i)\frac{\partial\omega_0^{(i)}}{\partial{x_i}}(x_i, t)
    \right)
 +  \pi h_i^2(x_i) k\Big(\omega_0^{(i)}(x_i, t)\Big)
 +  2\pi \, \delta_{\alpha_i, 1} \,  h_i(x_i) \kappa_i\Big(\omega_0^{(i)}(x_i, t), x_i, t \Big)
\\
 =  \pi h_i^2(x_i) f_0^{(i)}(x_i, t)
 +  \delta_{\beta_i, 1} \int\limits_{\partial\Upsilon_i(x_i)}
    \varphi^{(i)}(\overline{\xi}_i, x_i, t) \, dl_{\overline{\xi}_i},
\quad (x_i, t)\in I_\varepsilon^{(i)} \times (0, T),
\end{multline}
to define $\omega_0^{(i)}$ \ $(i\in \{1, 2, 3\}).$

Let $\omega_0^{(i)}$ be a solution of the differential equation (\ref{omega_probl_2})
(its existence will be proved in the subsection~\ref{Lim-prob}).
Thus, there exists a unique solution to the problem (\ref{regul_probl_2}) for each $i\in \{1, 2, 3\}.$

For determination of the coefficients $u_3^{(i)}, \  i=1,2,3,$ we similarly obtain the following problems:
\begin{equation}\label{regul_probl_3}
\left\{\begin{array}{rcl}
- \Delta_{\overline{\xi}_i}{u}_{3}^{(i)} (\overline{\xi}_i, x_i, t)
 & = &
 -  \dfrac{\partial\omega_1^{(i)}}{\partial{t}} (x_i, t)
 +  \dfrac{\partial^2\omega_1^{(i)}}{\partial{x_i}^2} (x_i, t)
 -  k^\prime\Big(\omega_0^{(i)}(x_i, t)\Big) \, \omega_1^{(i)}(x_i, t)
 +  f_1^{(i)} (\overline{\xi}_i, x_i, t),
\\[2mm]
 &   &
 \hspace{9cm} \overline{\xi}_i\in\Upsilon_i (x_i),
\\
  \partial_{\boldsymbol{\nu}_{\overline{\xi}_i}}{u}_{3}^{(i)}(\overline{\xi}_i, x_i, t)
 & = &
   h_i^\prime (x_i)\dfrac{d\omega_1^{(i)}}{d{x_i}} (x_i, t)
 - \delta_{\alpha_i, 1} \,
   \partial_s \kappa_i\Big(\omega_0^{(i)}(x_i, t), x_i, t \Big) \, \omega_1^{(i)}(x_i, t)
\\[2mm]
 &   &
 -  \, \delta_{\alpha_i, 2} \, \kappa_i\Big(\omega_0^{(i)}(x_i, t), x_i, t \Big)
 +  \delta_{\beta_i, 2} \, \varphi^{(i)} (\overline{\xi}_i, x_i, t),
    \hspace{1.58cm} \overline{\xi}_i\in\partial\Upsilon_i(x_i),
\\
   \langle u_3^{(i)} (\, \cdot \, , x_i, t) \rangle_{\Upsilon_i (x_i)}
 & = &
   0,
\end{array}\right.
\end{equation}
for each $i\in \{1, 2, 3\}.$ Here
\begin{equation*}
f_1^{(i)}(\overline{\xi}_i, x_i, t) =
    \sum\limits_{j=1}^3 (1-\delta_{ij}) \, \xi_j \, \frac{\partial }{\partial{x_j}}
    f(x, t)|_{\overline{x}_i=(0,0)}.
\end{equation*}

Repeating the previous reasoning, we find that the coefficients $\{\omega_1^{(i)}\}_{i=1}^3$
have to be solutions to the respective linear ordinary differential equation
\begin{multline}\label{omega_probl_3}
    \pi h_i^2(x_i) \dfrac{\partial\omega_1^{(i)}}{\partial{t}} (x_i, t)
 -  \pi \dfrac{\partial}{\partial{x_i}}
    \left(h_i^2(x_i)\frac{\partial\omega_1^{(i)}}{\partial{x_i}}(x_i, t)\right)
\\
 +  \pi h_i^2(x_i) k^\prime\Big(\omega_0^{(i)}(x_i, t)\Big) \,
    \omega_1^{(i)}(x_i, t)
 +  2\pi \, \delta_{\alpha_i, 1} \, h_i(x_i)
    \partial_s \kappa_i\Big(\omega_0^{(i)}(x_i, t), x_i, t\Big) \,
    \omega_1^{(i)}(x_i, t)
\\
 =  \int\limits_{\Upsilon_i(x_i)}f_1^{(i)}(\overline{\xi}_i, x_i, t) \, d{\overline{\xi}_i}
 -  2\pi \, \delta_{\alpha_i, 2} \,  h_i(x_i) \kappa_i\Big(\omega_0^{(i)}(x_i, t), x_i, t \Big)
\\
 +  \delta_{\beta_i, 2} \int\limits_{\partial\Upsilon_i(x_i)}
    \varphi^{(i)}(\overline{\xi}_i, x_i, t) \, dl_{\overline{\xi}_i},
\quad (x_i, t)\in I_\varepsilon^{(i)} \times (0, T) \quad \big(i\in \{1, 2, 3\}\big).
\end{multline}


 \subsection{Inner part}\label{inner_asymp}

To obtain conditions for the functions $\{\omega_n^{(i)}\}_{i=1}^3, \ n\in \{0, 1\}$
at the point $(0,0,0),$ we introduce the inner part of the asymptotic approximation~(\ref{junc})
in a neighborhood of the node $\Omega^{(0)}_\varepsilon$.
If we pass to the ``fast variables'' $\xi=\frac{x}{\varepsilon}$ and tend  $\varepsilon$ to $0,$
the domain $\Omega_\varepsilon$ is transformed into the unbounded domain $\Xi$
that  is the union of the domain~$\Xi^{(0)}$ and three semibounded cylinders
$$
\Xi^{(i)}
 =  \{ \xi=(\xi_1,\xi_2,\xi_3)\in\Bbb R^3 \ :
    \quad  \ell_0<\xi_i<+\infty,
    \quad |\overline{\xi}_i|<h_i(0) \},
\qquad i=1,2,3,
$$
i.e., $\Xi$ is the interior of $\bigcup_{i=0}^3\overline{\Xi^{(i)}}$ (see Fig.~\ref{f4}).

\begin{figure}[htbp]
\begin{center}
\includegraphics[width=6cm]{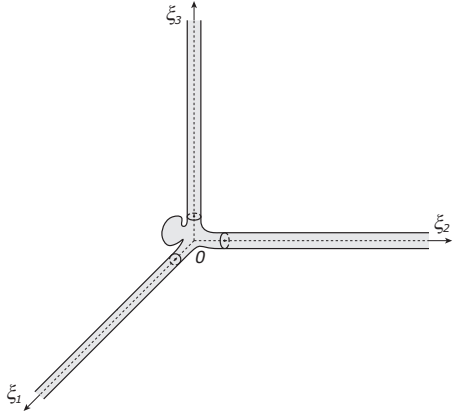}
\end{center}
\caption{The domain $\Xi$}\label{f4}
\end{figure}

Let us introduce the following notation for parts of the boundary of the domain $\Xi$:
$$
\Gamma_i = \{ \xi\in\Bbb R^3 \ : \quad \ell_0<\xi_i<+\infty, \quad |\overline{\xi}_i|=h_i(0)\}, \quad i=1,2,3,\quad \text{and} \quad
\Gamma_0 = \partial\Xi \backslash \Big(\bigcup_{i=1}^3 \Gamma_i \Big).
$$

Substituting (\ref{junc}) into the problem (\ref{probl}) and equating coefficients at the same powers
of $\varepsilon$, we derive the following relations for $N_n, \ (n\in \{0, 1, 2\}):$
\begin{equation}\label{junc_probl_n}
 \left\{\begin{array}{rcll}
  -\Delta_{\xi}{N_n}(\xi, t) & = &
   F_n(\xi, t), &
   \quad \xi\in\Xi,
\\[2mm]
   \partial_{{\boldsymbol \nu}_\xi}{N_n}(\xi, t) & = &
   B_{n}^{(0)}(\xi, t), &
   \quad \xi\in\Gamma_0,
\\[2mm]
   \partial_{{\boldsymbol \nu}_{\overline{\xi}_i}}{N_n}(\xi, t) & = &
   B_{n}^{(i)}(\xi, t), &
   \quad \xi\in\Gamma_i, \quad i=1,2,3,
\\[2mm]
   N_n(\xi, t) & \sim &
   \omega^{(i)}_{n}(0, t) + \Psi^{(i)}_{n}(\xi, t), &
   \quad \xi_i \to +\infty, \ \ \overline{\xi}_i \in \Upsilon_i(0), \quad i=1,2,3.
 \end{array}\right.
\end{equation}
Here
$$
F_0\equiv F_1\equiv 0, \quad
F_2(\xi, t) = - \, \partial_t N_0 - k(N_0) + f(0, t),  \quad
\xi\in\Xi,
$$
$$
B_{0}^{(0)} \equiv 0, \qquad
B_{1}^{(0)}
 = - \, \delta_{\alpha_0, 0} \, \kappa_0(N_0)
 +   \delta_{\beta_0, 0} \, \varphi^{(0)} (\xi, t),
$$
$$
B_{2}^{(0)}(\xi, t)
 = - \, \delta_{\alpha_0, 0} \, \kappa_0^{\prime}(N_0)N_1
   - \delta_{\alpha_0, 1} \, \kappa_0(N_0)
 +   \delta_{\beta_0, 1} \, \varphi^{(0)} (\xi, t),
\quad \xi\in\Gamma_0,
$$
$$
B_{0}^{(i)} \equiv B_{1}^{(i)} \equiv 0, \qquad
B_{2}^{(i)}(\xi, t)
 =-  \, \delta_{\alpha_i, 1} \, \kappa_i(N_0, 0, t)
 +   \delta_{\beta_i, 1} \, \varphi^{(i)} (\overline{\xi}_i, 0, t),
\quad \xi\in\Gamma_i, \ \ i=1,2,3.
$$
The variable $t$ is regarded as parameter from $(0, T).$
The right hand sides in the differential equation and boundary conditions on $\{\Gamma_i\}$
of the problem~ (\ref{junc_probl_n}) are obtained with the help of the Taylor's formula
for the functions $f, \ k$ and $\varphi^{(i)}, \ \kappa_0, \ \kappa_i$ at the points $x=0, \ s=N_0$ and $x_i=0, \ i=1,2,3,$ respectively.

The fourth condition in (\ref{junc_probl_n}) appears by matching the regular and inner asymptotics in a neighborhood of the node, namely the asymptotics of the terms $\{N_n\}$ as $\xi_i \to +\infty$ have to coincide with the corresponding asymptotics of  the terms
$\{\omega_{n}^{(i)}\}$ as $x_i =\varepsilon \xi_i \to +0, \ i=1,2,3,$ respectively.
Expanding formally each term of the regular asymptotics in the Taylor series at the points $x_i=0$
and collecting the coefficients of the same powers of $\varepsilon,$  we get
\begin{equation}\label{Psi_k}
\begin{array}{c}
\Psi_{0}^{(i)} \equiv 0, \qquad
\Psi_{1}^{(i)}(\xi, t)
 =   \xi_i\dfrac{\partial\omega_{0}^{(i)}}{\partial x_i}(0, t),
\quad  i=1,2,3,
\\[3mm]
\Psi_{2}^{(i)}(\xi, t)
 =   \dfrac{\xi_i^2}{2}\dfrac{\partial^2\omega_{0}^{(i)}}{\partial x_i^2}(0, t)
 +   \xi_i\dfrac{\partial\omega_{1}^{(i)}}{\partial x_i}(0, t)
 +   u_{2}^{(i)} (\overline{\xi}_i, 0, t),
\quad  i=1,2,3.
\end{array}
\end{equation}

A solution of the problem (\ref{junc_probl_n}) at $n=1,2$ is sought in the form
\begin{equation}\label{new-solution}
N_n(\xi, t) = \sum\limits_{i=1}^3 \Psi_{n}^{(i)}(\xi, t)\chi_i(\xi_i) + \widetilde{N}_n(\xi, t),
\end{equation}
where $ \chi_i \in C^{\infty}(\Bbb{R}_+),\ 0\leq \chi_i \leq1$ and
$$
\chi_i(\xi_i) =
\left\{\begin{array}{ll}
 0, & \text{if} \quad \xi_i \leq 1+\ell_0,
\\[2mm]
 1, & \text{if} \quad \xi_i \geq 2+\ell_0,
\end{array}\right. \qquad i=1,2,3.
$$
Then $\widetilde{N}_n$ has to be a  solution of the problem
\begin{equation}\label{junc_probl_general}
 \left\{\begin{array}{rcll}
  -\Delta_{\xi}{\widetilde{N}_n}(\xi, t) & = &
   \widetilde{F}_n(\xi, t),                  &
   \quad \xi\in\Xi,
\\[2mm]
   \partial_{\boldsymbol{\nu}_\xi}{\widetilde{N}_n}(\xi, t) & = &
   \widetilde{B}_{n}^{(0)}(\xi, t),                                           &
   \quad \xi\in\Gamma_0,
\\[2mm]
   \partial_{\boldsymbol{\nu}_{\overline{\xi}_i}}{\widetilde{N}_n}(\xi, t) & = &
   \widetilde{B}_{n}^{(i)}(\xi, t),                               &
   \quad \xi\in\Gamma_i, \quad i=1,2,3,
 \end{array}\right.
\end{equation}
where
$$
\widetilde{F}_1(\xi, t)
 =  \sum\limits_{i=1}^3
    \Big(
    \xi_i\dfrac{\partial\omega_0^{(i)}}{\partial x_i}(0, t) \chi_i^{\prime\prime}(\xi_i)
 + 2\dfrac{\partial\omega_0^{(i)}}{\partial x_i}(0, t) \chi_i^{\prime}(\xi_i)
    \Big),
$$
\begin{multline*}
\widetilde{F}_2(\xi, t)
 =  \sum\limits_{i=1}^3
    \Bigg[
    \bigg(
     \dfrac{\xi_i^2}{2}\dfrac{d^2\omega_{0}^{(i)}}{dx_i^2}(0, t)
 +   \xi_i\dfrac{\partial\omega_{1}^{(i)}}{\partial x_i}(0, t)
 +   u_{2}^{(i)} (\overline{\xi}_i, 0, t)
    \bigg)
    \chi_i^{\prime\prime}(\xi_i)
\\
 + 2\bigg(
     \xi_i\dfrac{\partial^2\omega_{0}^{(i)}}{\partial x_i^2}(0, t)
 +   \dfrac{\partial\omega_{1}^{(i)}}{\partial x_i}(0, t)
    \bigg)
    \chi_i^{\prime}(\xi_i)
    \Bigg]
\\
 -  \, \partial_t N_0
 -  k\big(N_0\big)
 +  \sum_{i=1}^3
    \left(
    \, \partial_t \omega_0^{(i)}(0, t)
 +  k \big(\omega_0^{(i)}(0, t)\big)
    \right) \chi_i(\xi_i)
 +  \bigg( 1 - \sum\limits_{i=1}^3 \chi_i(\xi_i) \bigg) f(0, t),
\end{multline*}
and
\begin{multline*}
\widetilde{B}_{1}^{(0)}
 =-   \delta_{\alpha_0, 0} \, \kappa_0(N_0)
 +   \delta_{\beta_0, 0} \, \varphi^{(0)} (\xi, t), \qquad
\widetilde{B}_{2}^{(0)}(\xi, t)
 = -  \delta_{\alpha_0, 0} \, \kappa_0^{\prime}(N_0)N_1
   - \delta_{\alpha_0, 1} \, \kappa_0(N_0)
 +   \delta_{\beta_0, 1} \, \varphi^{(0)} (\xi, t),
\\
\widetilde{B}_{1}^{(i)} \equiv 0, \qquad
\widetilde{B}_{2}^{(i)} (\xi, t)
 =- \, \delta_{\alpha_i, 1}
    \left(
     \kappa_i \big( N_0, 0, t \big)
 -   \kappa_i \big(\omega_0^{(i)}(0, t), 0, t \big) \chi_i(\xi_i)
    \right)
 +  \delta_{\beta_i, 1} \, \varphi^{(i)} (\overline{\xi}_i, 0, t)
    \big( 1-\chi_i(\xi_i) \big),
\end{multline*}
for $i\in\{1,2,3\}.$
In addition, we demand that $\widetilde{N}_n$ satisfies the following stabilization conditions:
\begin{equation}\label{junc_probl_general+cond}
   \widetilde{N}_n(\xi, t)  \rightarrow  \omega^{(i)}_{n}(0, t)
   \quad \text{as} \quad \xi_i \to +\infty, \ \ \overline{\xi}_i \in \Upsilon_i(0), \quad i=1,2,3.
\end{equation}

The existence of a solution to the  problem (\ref{junc_probl_general})
in the corresponding energetic space can be obtained from general results
about the asymptotic behavior of solutions to elliptic problems in domains
with different exits to infinity (see e.g. \cite{Ko-Ol,Naz96}).
We will use approach proposed in \cite{Naz96,ZAA99}.

Let $C^{\infty}_{0,\xi}(\overline{\Xi})$ be a space of functions infinitely differentiable in $\overline{\Xi}$ and finite with
respect to  $\xi$, i.e.,
$$
\forall \,v\in C^{\infty}_{0,\xi}(\overline{\Xi}) \quad \exists \,R>0 \quad \forall \, \xi\in\overline{\Xi} \quad \xi_i \geq R, \ \ i=1,2,3 \, : \quad v(\xi)=0.
$$
We now define a  space  $\mathcal{H} := \overline{\left( C^{\infty}_{0,\xi}(\overline{\Xi}), \ \| \cdot \|_\mathcal{H} \right)}$, where
$$
\| v \|_\mathcal{H}
 =  \sqrt{\int_\Xi|\nabla v(\xi)|^2 \, d\xi + \int_\Xi |v(\xi)|^2 |\rho(\xi)|^2 \, d\xi \, } ,
$$
and the weight function  $ \rho \in C^{\infty}(\Bbb{R}^3),\ 0\leq \rho \leq1$ and
$$
\rho (\xi) =
\left\{\begin{array}{ll}
1,            & \mbox{if} \quad                        \xi \in \Xi^{(0)}, \\
|\xi_i|^{-1}, & \mbox{if} \quad \xi_i \geq \ell_0+1, \ \xi \in \Xi^{(i)}, \quad i=1,2,3.
\end{array}\right.
$$

\begin{definition}
A function $\widetilde{N}_n$ from the space $\mathcal{H}$ is called a weak solution of the problem
(\ref{junc_probl_general}) if the identity
\begin{equation*}
    \int_{\Xi} \nabla \widetilde{N}_n \cdot \nabla v \, d\xi =  \int_{\Xi} \widetilde{F}_n \, v \, d\xi
 +  \sum_{i=0}^3 \ \int\limits_{\Gamma_i}  \widetilde{B}^{(i)}_n \, v \, d\sigma_\xi
\end{equation*}
holds for all $v\in\mathcal{H}$.
\end{definition}

Similarly as in \cite{ZAA99}, we prove the following proposition.
\begin{proposition}\label{tverd1}
   Let $\rho^{-1} \widetilde{F}_n (\cdot\,, t) \in L^2(\Xi), \
                  \widetilde{B}^{(0)}_{n} (\cdot\,, t) \in L^2(\Gamma_0), \
        \rho^{-1} \widetilde{B}^{(i)}_{n} (\cdot\,, t) \in L^2(\Gamma_i), \quad i=1,2,3,$ for a.e. $t\in (0, T).$
Then there exist a weak solution of problem (\ref{junc_probl_general}) if and only if
\begin{equation}\label{solvability}
    \int_{\Xi} \widetilde{F}_n \, d\xi
 +  \sum_{i=0}^3 \, \int_{\Gamma_i}
    \widetilde{B}^{(i)}_n \, d\sigma_\xi
 =  0.
\end{equation}
This solution is defined up to an additive constant.
The additive constant  can be chosen to guarantee the existence and uniqueness of a weak solution
of problem (\ref{junc_probl_general}) with the following differentiable asymptotics:
\begin{equation}\label{inner_asympt_general}
\widehat{N}_n(\xi,t)=\left\{
\begin{array}{rl}
    {\mathcal O}(\exp( - \gamma_1 \xi_1)) & \mbox{as} \ \ \xi_1\to+\infty,
\\[2mm]
    \boldsymbol{\delta_n^{(2)}}(t)
 +  {\mathcal O}(\exp( - \gamma_2 \xi_2)) & \mbox{as} \ \ \xi_2\to+\infty,
\\[2mm]
    \boldsymbol{\delta_n^{(3)}}(t)
 +  {\mathcal O}(\exp( - \gamma_3 \xi_3)) & \mbox{as} \ \ \xi_3\to+\infty,
\end{array}
\right.
\end{equation}
where $\gamma_i, \ i=1,2,3 $ are positive constants.
\end{proposition}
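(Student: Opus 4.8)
The plan is to treat \eqref{junc_probl_general} as a pure Neumann problem on the Hilbert space $\mathcal H$ and to combine a Lax--Milgram argument modulo constants with the Kondratiev-type theory of elliptic problems in domains with cylindrical exits to infinity, as in \cite{Naz96,ZAA99}. A preliminary observation is that the constant function $\mathbf 1$ belongs to $\mathcal H$: since $\rho(\xi)=|\xi_i|^{-1}$ in each outlet and $\int^{+\infty}|\xi_i|^{-2}\,d\xi_i<+\infty$, one has $\int_{\Xi}|\rho|^2\,d\xi<+\infty$, whereas a function of linear growth does \emph{not} lie in $\mathcal H$; this dichotomy is exactly what will govern the asymptotics. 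The \emph{necessity} of \eqref{solvability} is then immediate: if $\widetilde N_n\in\mathcal H$ is a weak solution, testing the identity with $v\equiv\mathbf 1\in\mathcal H$ annihilates the left-hand side and leaves $\int_{\Xi}\widetilde F_n\,d\xi+\sum_{i=0}^{3}\int_{\Gamma_i}\widetilde B_n^{(i)}\,d\sigma_\xi=0$.

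For \emph{existence and uniqueness up to a constant}, I would work on the subspace $\mathcal H_{\diamond}\subset\mathcal H$ of functions with zero average over a fixed bounded part of $\Xi^{(0)}$. Coercivity on $\mathcal H_{\diamond}$ of the form $a(u,v):=\int_{\Xi}\nabla u\cdot\nabla v\,d\xi$ reduces to a weighted Poincar\'e inequality $\|v\|_{\mathcal H}\le C\|\nabla v\|_{L^2(\Xi)}$ for $v\in\mathcal H_{\diamond}$: in the bounded core this is the ordinary Poincar\'e inequality, and in each outlet it follows by splitting $v$ into its cross-sectional mean $w(\xi_i)$ and the remainder, applying the one-dimensional Hardy inequality to $w$ (so that $\int|\xi_i|^{-2}|w-w(\ell_0+1)|^2\,d\xi_i\le C\int|w'|^2\,d\xi_i$) and the cross-sectional Poincar\'e inequality, whose constant involves the first nonzero Neumann eigenvalue $\gamma_i^2$ of $\Upsilon_i(0)$, to the remainder. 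The functional
\[
\ell(v)=\int_{\Xi}\widetilde F_n\,v\,d\xi+\sum_{i=0}^{3}\int_{\Gamma_i}\widetilde B_n^{(i)}\,v\,d\sigma_\xi
\]
is bounded on $\mathcal H$ by the hypotheses $\rho^{-1}\widetilde F_n(\cdot,t)\in L^2(\Xi)$, $\widetilde B_n^{(0)}(\cdot,t)\in L^2(\Gamma_0)$, $\rho^{-1}\widetilde B_n^{(i)}(\cdot,t)\in L^2(\Gamma_i)$, combined with the estimate $\|\rho v\|_{L^2(\Xi)}+\sum_{i=1}^{3}\|\rho v\|_{L^2(\Gamma_i)}\le C\|v\|_{\mathcal H}$ (the boundary part being a weighted trace inequality on the half-cylinders); moreover $\ell(\mathbf 1)=0$ is precisely \eqref{solvability}, so $\ell$ descends to $\mathcal H_{\diamond}$. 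Lax--Milgram then yields a unique $\widetilde N_n\in\mathcal H_{\diamond}$, and every weak solution in $\mathcal H$ differs from it by an additive constant; the dependence on the parameter $t$ is inherited from that of the data.

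It remains to obtain the differentiable asymptotics \eqref{inner_asympt_general}. Here I would use that in each outlet $\Xi^{(i)}$ the data $\widetilde F_n$, $\widetilde B_n^{(i)}$ actually have compact support (for $n=1$ because $\widetilde B_1^{(i)}\equiv0$ and $\widetilde F_1$ is supported on $\{1+\ell_0\le\xi_i\le 2+\ell_0\}$; for $n=2$ because $N_0$ is constant, so the cut-off-free part of $\widetilde F_2$ and of $\widetilde B_2^{(i)}$ vanishes for large $\xi_i$ while all remaining contributions carry a factor $\chi_i'$ or $\chi_i''$). Hence $\widetilde N_n$ solves a homogeneous Neumann problem in the half-cylinder $\{\xi_i>2+\ell_0\}\times\Upsilon_i(0)$. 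Separating variables, the indicial exponents of the cross-sectional Neumann Laplacian are $0$ (constants) and $\pm\gamma_i$ together with larger ones; since linearly growing solutions are excluded from $\mathcal H$, membership of $\widetilde N_n$ in $\mathcal H$ forces $\widetilde N_n(\xi,t)=c_i(t)+\mathcal O(\exp(-\gamma_i\xi_i))$, and interior Schauder (or $W^{2,p}$) estimates upgrade this to the differentiable asymptotics with exponentially small derivatives. Choosing the free additive constant so that $c_1(t)=0$ and setting $\boldsymbol{\delta_n^{(i)}}(t):=c_i(t)$ for $i=2,3$ gives \eqref{inner_asympt_general}.

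The step I expect to be the main obstacle is this last one: making rigorous that an energy solution over a domain with cylindrical exits cannot carry a linearly growing component and must stabilize to a constant \emph{exponentially}. This requires the weighted-space (Kondratiev / Nazarov--Plamenevsky) machinery for cylindrical ends --- identification of the operator pencil on the cross-section, an exponential-weight a priori estimate in the outlets, and a cut-off/bootstrap argument to transfer decay of the data to decay of the solution --- together with a careful check of the admissibility of the critical weight $|\xi_i|^{-1}$ in the Poincar\'e and trace inequalities of the first two paragraphs.
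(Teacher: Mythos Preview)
The paper does not give a proof of this proposition at all; it only states ``Similarly as in \cite{ZAA99}, we prove the following proposition,'' deferring entirely to the reference. Your sketch is precisely the standard argument from that circle of papers: the constant $\mathbf 1\in\mathcal H$ gives necessity of \eqref{solvability}, a weighted Poincar\'e/Hardy inequality on the quotient by constants yields coercivity for Lax--Milgram, and the exponential asymptotics in the outlets come from the operator-pencil analysis on the cross-section. So your approach is correct and coincides with the intended one.

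One point worth flagging: in your asymptotics step you explicitly use that the data $\widetilde F_n,\widetilde B_n^{(i)}$ have compact support in each outlet. This is indeed true for the concrete right-hand sides in the paper (for $n=1,2$), but the proposition is phrased under the weaker hypothesis $\rho^{-1}\widetilde F_n\in L^2(\Xi)$, $\rho^{-1}\widetilde B_n^{(i)}\in L^2(\Gamma_i)$, which by itself does \emph{not} force exponential decay of the solution to its limiting constants --- only membership in the corresponding weighted class. The exponential asymptotics \eqref{inner_asympt_general} really do require (at least) exponential decay of the data along the outlets, as in \cite{ZAA99} and as holds in every instance where the paper actually applies the proposition. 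Your closing paragraph already identifies this as the delicate step; just be aware that it is a feature of the specific right-hand sides rather than a consequence of the stated $L^2_\rho$ hypotheses alone.
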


The values $\boldsymbol{\delta_n^{(2)}}$ and $\boldsymbol{\delta_n^{(3)}}$
in (\ref{inner_asympt_general}) are defined as follows:
\begin{equation}\label{const_d_0}
\boldsymbol{\delta_n^{(i)}} (t)
 =  \int_{\Xi} \mathfrak{N}_i(\xi) \, \widetilde{F}_n(\xi, t) \, d\xi
 +  \sum\limits_{j=0}^3 \,
    \int_{\Gamma_j} \mathfrak{N}_i(\xi) \, \widetilde{B}_n^{(j)}(\xi, t) \, d\sigma_\xi ,
\quad i=2,3, \ \  n\in\{0, 1, 2\},
\end{equation}
where $\mathfrak{N}_2$ and $\mathfrak{N}_3$ are special solutions to
the corresponding homogeneous problem
\begin{equation}\label{hom_probl}
  -\Delta_{\xi}\mathfrak{N} = 0 \ \ \text{in} \ \ \Xi, \qquad
  \partial_\nu \mathfrak{N} = 0 \ \ \text{on} \ \ \partial \Xi,
\end{equation}
for the problem (\ref{junc_probl_general}).

\begin{proposition}\label{tverd2}
The  problem (\ref{hom_probl}) has two linearly independent solutions
$\mathfrak{N}_2$ and $\mathfrak{N}_3$ that do not belong to the space
${\mathcal H}$ and they have the following differentiable asymptotics:
\begin{equation}\label{inner_asympt_hom_solution_1}
\mathfrak{N}_2(\xi) = \left\{
\begin{array}{rl}
    - \dfrac{\xi_1}{\pi h_1^2(0)}
 +  {\mathcal O}(\exp( - \gamma_1 \xi_1)) & \mbox{as} \ \ \xi_1\to+\infty,
 \\[3mm]
   \dfrac{\xi_2}{\pi h_2^2(0)} +   C_2^{(2)}
 +  {\mathcal O}(\exp( - \gamma_2 \xi_2)) & \mbox{as} \ \ \xi_2\to+\infty,
 \\[3mm]
    C_2^{(3)} + {\mathcal O}(\exp( - \gamma_3 \xi_3)) & \mbox{as} \ \ \xi_3\to+\infty,
\end{array}
\right.
\end{equation}
\begin{equation}\label{inner_asympt_hom_solution_2}
\mathfrak{N}_3(\xi) = \left\{
\begin{array}{rl}
     - \dfrac{\xi_1}{\pi h_1^2(0)}
 +     {\mathcal O}(\exp( - \gamma_1 \xi_1)) & \mbox{as} \ \ \xi_1\to+\infty,
 \\[3mm]
   C_3^{(2)} +  {\mathcal O}(\exp( - \gamma_2 \xi_2)) & \mbox{as} \ \ \xi_2\to+\infty,
 \\
    \dfrac{\xi_3}{\pi h_3^2(0)} + C_3^{(3)}
 +   {\mathcal O}(\exp( - \gamma_3 \xi_3)) & \mbox{as} \ \ \xi_3\to+\infty,
\end{array}
\right.
\end{equation}

Any other solution to the homogeneous problem, which has polynomial growth at infinity,
can be presented as a linear combination
$\mathfrak{c}_1 + \mathfrak{c}_2 \mathfrak{N}_2 + \mathfrak{c}_3 \mathfrak{N}_3.$
\end{proposition}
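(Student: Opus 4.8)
The plan is to reduce everything to the structure of harmonic functions satisfying the Neumann condition in the three cylindrical ends of $\Xi$, to construct $\mathfrak{N}_2$ and $\mathfrak{N}_3$ by a cut-off ansatz together with Proposition~\ref{tverd1}, and then to deduce the basis property from a flux balance identity and an energy argument. First I would recall the classical Kondratiev/Nazarov dichotomy for $-\Delta$ with the Neumann condition on the lateral surface of a semi-infinite cylinder $\Xi^{(i)}$ with cross-section $\Upsilon_i(0)$: expanding a harmonic function in the $L^2(\Upsilon_i(0))$-basis of Neumann eigenfunctions of the disk, the zero eigenvalue produces the slot $a_i\xi_i+b_i$, while each positive eigenvalue $\mu>0$ contributes a combination $Ae^{\sqrt{\mu}\,\xi_i}+Be^{-\sqrt{\mu}\,\xi_i}$. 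Hence a harmonic function on $\Xi$ with Neumann data and polynomial growth is automatically of the form $a_i\xi_i+b_i+{\mathcal O}(e^{-\gamma_i\xi_i})$ in each end, with the same estimate after differentiation; so ``polynomial growth'' means here ``at most linear growth'', and the remainders admit differentiable asymptotics exactly as in (\ref{inner_asympt_general}).

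\textbf{Construction of $\mathfrak{N}_2$ and $\mathfrak{N}_3$.} I would seek $\mathfrak{N}_2$ in the form $\mathfrak{N}_2=-\frac{\xi_1}{\pi h_1^2(0)}\chi_1(\xi_1)+\frac{\xi_2}{\pi h_2^2(0)}\chi_2(\xi_2)+\widetilde{\mathfrak{N}}_2$, with $\chi_1,\chi_2$ the cut-off functions from (\ref{new-solution}). Then $\widetilde{\mathfrak{N}}_2$ solves a problem of type (\ref{junc_probl_general}) whose right-hand side, coming from $\Delta(\xi_i\chi_i)=2\chi_i'+\xi_i\chi_i''$, is supported in the transition zones $\{1+\ell_0\le\xi_i\le2+\ell_0\}$ and therefore belongs to the weighted spaces required by Proposition~\ref{tverd1}, with no contribution on the $\Gamma_j$. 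The solvability condition (\ref{solvability}) reduces, after integrating over the two cylinders and using $\int_{\ell_0}^{\infty}(2\chi_i'+\xi_i\chi_i'')\,d\xi_i=[\chi_i+\xi_i\chi_i']_{\ell_0}^{\infty}=1$, to the numerical identity $-1+1=0$; hence Proposition~\ref{tverd1} provides $\widetilde{\mathfrak{N}}_2\in\mathcal{H}$ with a differentiable asymptotics of the form (\ref{inner_asympt_general}) --- exponential decay in the first outlet and constants in the second and third. Adding back the cut-off linear parts gives $\mathfrak{N}_2$ with precisely the expansion (\ref{inner_asympt_hom_solution_1}), the constants $C_2^{(2)},C_2^{(3)}$ being those inherited from $\widetilde{\mathfrak{N}}_2$. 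The function $\mathfrak{N}_3$ is obtained identically, with $\chi_1,\chi_3$ and coefficients $-\frac1{\pi h_1^2(0)},\frac1{\pi h_3^2(0)}$, whose flux balance again reads $-1+0+1=0$. Since $\mathfrak{N}_2$ and $\mathfrak{N}_3$ grow linearly in an outlet while $\rho\sim|\xi_i|^{-1}$ there, the integral $\int_\Xi|v|^2\rho^2\,d\xi$ diverges for $v=\mathfrak{N}_2$ or $v=\mathfrak{N}_3$, so neither lies in $\mathcal{H}$.

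\textbf{Basis property.} Linear independence of $1,\mathfrak{N}_2,\mathfrak{N}_3$ follows by reading off, in a relation $\mathfrak{c}_1+\mathfrak{c}_2\mathfrak{N}_2+\mathfrak{c}_3\mathfrak{N}_3\equiv0$, the coefficients of $\xi_2$ and of $\xi_3$ in the ends $\Xi^{(2)}$ and $\Xi^{(3)}$: they force $\mathfrak{c}_2=\mathfrak{c}_3=0$ and then $\mathfrak{c}_1=0$. For completeness, let $\mathfrak{N}$ be any solution of (\ref{hom_probl}) with polynomial growth; by the first paragraph $\mathfrak{N}\sim a_i\xi_i+b_i+{\mathcal O}(e^{-\gamma_i\xi_i})$ in the $i$-th outlet. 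Applying the divergence theorem to $\nabla\mathfrak{N}$ on $\Xi$ truncated at $\xi_i=R$ and letting $R\to\infty$ --- the lateral and $\Gamma_0$ fluxes vanish by the Neumann condition --- yields $\sum_{i=1}^3 a_i\,\pi h_i^2(0)=0$; since the leading-coefficient vectors of $\mathfrak{N}_2$ and $\mathfrak{N}_3$ span this two-dimensional subspace, one chooses $\mathfrak{c}_2,\mathfrak{c}_3$ so that $w:=\mathfrak{N}-\mathfrak{c}_2\mathfrak{N}_2-\mathfrak{c}_3\mathfrak{N}_3$ has no linear part, i.e. $w$ tends to a constant in every end. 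Green's formula for $w$ on the truncated domain, whose cap integrals are ${\mathcal O}(e^{-\gamma_iR})$ because $\partial_{\xi_i}w$ is exponentially small and $w$ bounded, then gives $\int_\Xi|\nabla w|^2\,d\xi=0$, so $w\equiv\mathfrak{c}_1$ and $\mathfrak{N}=\mathfrak{c}_1+\mathfrak{c}_2\mathfrak{N}_2+\mathfrak{c}_3\mathfrak{N}_3$.

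The main obstacle is the first paragraph together with the tail control in the last one: rigorously justifying that polynomial growth forces the linear-plus-exponential dichotomy with differentiable remainders, and that the flux and energy identities survive the limit $R\to\infty$. Both rest on the theory of elliptic boundary-value problems in domains with cylindrical outlets to infinity already invoked for Proposition~\ref{tverd1} (see \cite{Ko-Ol,Naz96,ZAA99}); beyond that, only the explicit normalization of the fluxes through $\pi h_i^2(0)$ and the dimension count are new, and they are routine.
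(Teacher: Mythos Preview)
Your construction of $\mathfrak{N}_2$ and $\mathfrak{N}_3$ via the cut-off ansatz and Proposition~\ref{tverd1} is exactly the paper's argument, down to the same choice of linear parts and the same verification of the solvability condition. The paper then dismisses the basis property in one sentence (``Obviously, \ldots''), whereas you supply the flux-balance identity $\sum_i a_i\,\pi h_i^2(0)=0$ and the energy argument showing that a bounded solution must be constant; this is the standard way to fill in that ``obviously'' and is entirely in keeping with the sources \cite{Ko-Ol,Naz96,ZAA99} the paper cites, so no genuine divergence from the intended proof.
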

\begin{proof}
The solution $\mathfrak{N}_2$ is sought in the form of a sum
$$
\mathfrak{N}_2 (\xi) = - \dfrac{\xi_1}{\pi h_1^2(0)} \, \chi_1(\xi_1) + \dfrac{\xi_2}{\pi h_2^2(0)} \, \chi_2(\xi_2)  +  \widetilde{\mathfrak{N}}_2(\xi),
$$
where $\widetilde{\mathfrak{N}}_2 \in {\mathcal H}$ and $\widetilde{\mathfrak{N}}_2$
is the solution to the problem (\ref{junc_probl_general}) with right-hand sides
$$
\widetilde{F}_2^*(\xi) =
\left\{
\begin{array}{rl}
    \dfrac{1}{\pi h_1^2(0)}\left(\big(\xi_1 \, \chi_1^{\prime}(\xi_1)\big)' + \chi_1^{\prime}(\xi_1)\right),
  & \ \ \xi \in  \Xi^{(1)},
 \\[3mm]
  -\dfrac{1}{\pi h_2^2(0)}\left(\big(\xi_2 \, \chi_2^{\prime}(\xi_2)\big)' + \chi_2^{\prime}(\xi_2)\right),
  &  \ \ \xi \in  \Xi^{(2)},
 \\[2mm]
  0 \, ,
  & \ \ \xi \in \Xi^{(0)} \cup \Xi^{(3)}.
\end{array}
\right.
$$
It is easy to verify that the solvability condition (\ref{solvability}) is satisfied.
Thus, by virtue of Proposition \ref{tverd1} there exist a unique solution $\widetilde{\mathfrak{N}}_2 \in {\mathcal H}$  that has the asymptotics
$$
\widetilde{\mathfrak{N}}_2(\xi)
 =  (1- \delta_{j 1}) C_2^{(j)}
 +  {\mathcal O}(\exp( - \gamma_j \xi_j))
\quad \mbox{as} \ \ \xi_j\to+\infty, \qquad j=1,2,3.
$$

Similar we can prove the existence of the solution $\mathfrak{N}_3$ with the asymptotics (\ref{inner_asympt_hom_solution_2}).

Obviously, that  $\mathfrak{N}_2$ and $\mathfrak{N}_3$ are linearly independent and
any other solution to the homogeneous problem, which has polynomial growth at infinity, can be presented as
$\mathfrak{c}_1 + \mathfrak{c}_2 \mathfrak{N}_2 + \mathfrak{c}_3 \mathfrak{N}_3.$
\end{proof}

\begin{remark}\label{remark_constant}
To obtain formulas (\ref{const_d_0})  it is necessary to substitute
the functions $\widehat{N}_n, \mathfrak{N}_2$ and $\widehat{N}_n, \mathfrak{N}_3$
in  the second Green-Ostrogradsky formula
$$
   \int_{\Xi_R} \big(\widehat{N} \, \Delta_\xi \mathfrak{N} - \mathfrak{N} \, \Delta_\xi \widehat{N}\big)\, d\xi
 = \int_{\partial \Xi_R}
   \big(\widehat{N} \,\partial_{\nu_\xi} \mathfrak{N} - \mathfrak{N}\, \partial_{\nu_\xi}\widehat{N}\big)\, d\sigma_\xi
$$
respectively, and then pass to the limit as $R \to +\infty.$ Here $\Xi_R = \Xi \cap \{\xi : \ |\xi_i| < R, \ i=1, 2, 3\}.$
\end{remark}

\subsubsection{Limit problem}\label{Lim-prob}
The problem (\ref{junc_probl_n})
at $n=0$ is as follows:
\begin{equation*}
 \left\{\begin{array}{rcll}
  -\Delta_{\xi}{{N}_0}(\xi, t) & = &
   0,                  &
   \quad \xi\in\Xi,
\\[2mm]
   \partial_{\boldsymbol{\nu}_\xi}{{N}_0}(\xi, t) & = &
   0,                                           &
   \quad \xi\in\Gamma_0,
\\[2mm]
   \partial_{\boldsymbol{\nu}_{\overline{\xi}_i}}{{N}_0}(\xi, t) & = &
   0,                               &
   \quad \xi\in\Gamma_i, \quad i=1,2,3,
\\[2mm]
   {N}_0(\xi, t)                                                   & \longrightarrow&
   \omega^{(i)}_{0}(0, t),                                                          &
   \quad \xi_i \to +\infty, \ \ \overline{\xi}_i \in \Upsilon_i(0), \quad i=1,2,3.
 \end{array}\right.
\end{equation*}
It is ease to verify that $\boldsymbol{\delta_0^{(2)}}=\boldsymbol{\delta_0^{(3)}}=0$ and $\widehat{N}_0 \equiv 0.$
Thus, this problem has a solution in $\mathcal{H}$ if and only if
\begin{equation}\label{trans0}
\omega_0^{(1)} (0, t) = \omega_0^{(2)} (0, t) = \omega_0^{(3)} (0, t);
\end{equation}
in this case  \ $N_0 = \widetilde{N}_0 = \omega_0^{(1)} (0, t).$

In the problem (\ref{junc_probl_general}) at $n=1$ the solvability condition (\ref{solvability})
reads as follows:
\begin{equation}\label{transmisiont0}
  \pi h_1^2 (0) \frac{\partial\omega_{0}^{(1)}}{\partial x_1} (0, t)
 +  \pi h_2^2 (0) \frac{\partial\omega_{0}^{(2)}}{\partial x_2} (0, t)
 + \pi h_3^2 (0) \frac{\partial\omega_{0}^{(3)}}{\partial x_3} (0, t)
 -  \delta_{\alpha_0, 0} \big|\Gamma_0\big|_2 \kappa_0 \big(\omega_0^{(1)}(0, t)\big)
 = - \boldsymbol{d_0^*}(t),
\end{equation}
where
$$
\boldsymbol{d_0^*}(t)
 =   \delta_{\beta_0, 0} \int_{\Gamma_{0}}
     \varphi^{(0)}(\xi, t) \, d\sigma_{\xi}.
$$

Substituting (\ref{regul}) into the forth condition in (\ref{probl})
and neglecting terms of order of ${\mathcal O}(\varepsilon),$
we arrive to the following boundary conditions:
\begin{equation}\label{bv_left}
\omega_{0}^{(i)}(\ell_i, t)=0, \quad i=1,2,3.
\end{equation}

Thus, taking into account (\ref{omega_probl_2}), (\ref{trans0}), (\ref{transmisiont0})
and (\ref{bv_left}), we obtain for $\{\omega_0^{(i)}\}_{i=1}^3$ the following semi-linear problem:
\begin{equation}\label{main}
 \left\{\begin{array}{l}
    \pi h_i^2(x_i) \dfrac{\partial\omega_0^{(i)}}{\partial{t}} (x_i, t)
  - \pi \dfrac{\partial}{\partial{x_i}}
    \left(h_i^2(x_i)\dfrac{\partial\omega_0^{(i)}}{\partial{x_i}}(x_i, t)\right)
  + \pi h_i^2(x_i) \, k\Big(\omega_0^{(i)}(x_i, t)\Big)
 \\[5mm]
 \begin{array}{rclrl}
  + \ 2\pi \delta_{\alpha_i, 1} h_i(x_i)
    \kappa_i\Big(\omega_0^{(i)}(x_i, t), x_i, t \Big)
    & = &
    \widehat{F}_0^{(i)}(x_i, t), \ & (x_i, t) \in I_i\times(0, T), & i=1,2,3,
 \\[3mm]
    \omega_0^{(i)}(\ell_i, t)
    & = &
    0, & t\in(0, T), & i=1,2,3,
 \\[3mm]
    \omega_0^{(1)} (0, t) \hspace{0.26cm} = \hspace{0.26cm} \omega_0^{(2)} (0, t)
    & = &
    \omega_0^{(3)} (0, t), & t\in(0, T), &
 \\[3mm]
   \sum\limits_{i=1}^3 \pi h_i^2 (0) \dfrac{\partial\omega_{0}^{(i)}}{\partial{x_i}} (0, t)
  - \delta_{\alpha_0, 0} \big|\Gamma_0\big|_2 \kappa_0 \big(\omega_0^{(1)}(0, t)\big) & = &
   - \boldsymbol{d_0^*}(t), & t\in(0, T), &
 \\[3mm]
    \omega_{0}^{(i)} (x_i, 0)
    & = &
    0, & x_i\in I_i, & i=1,2,3,
 \end{array}
 \end{array}\right.
\end{equation}
where $I_i:=\{x: \ x_i\in (0, \ell_i), \ \overline{x}_i=(0,0)\}$ and
\begin{equation}\label{right-hand-side}
\widehat{F}_0^{(i)}(x_i, t)
 := \pi h_i^2(x_i) \, f(x, t) \, \big|_{\overline{x}_i=(0,0)}
 +  \delta_{\beta_i, 1} \int\limits_{\partial\Upsilon_i(x_i)}
    \varphi^{(i)} (\overline{\xi}_i, x_i, t) \, dl_{\overline{\xi}_i},
\quad \ x\in I_i.
\end{equation}
The problem (\ref{main}) is  called {\it the limit problem} for problem (\ref{probl}).

For functions
$$
\widetilde{\phi}(x)=\left\{
                     \begin{array}{ll}
                      \phi^{(1)}(x_1), & \hbox{if} \ \ x_1 \in I_1, \\
                      \phi^{(2)}(x_2), & \hbox{if} \ \ x_2 \in I_2, \\
                      \phi^{(3)}(x_3), & \hbox{if} \ \ x_3 \in I_3,
                     \end{array}
                    \right.
$$
defined on the graph $\mathcal{I} := \overline{I_1}\cup \overline{I_2} \cup \overline{I_3} ,$
we introduce the Sobolev space
$$
{\mathcal H}_0 := \left\{ \widetilde{\phi}: \
\phi^{(i)} \in H^1(I_i), \ \ \phi^{(i)}(\ell_i) = 0, \ \ i=1, 2, 3, \ \ \hbox{and} \ \
\phi^{(1)}(0) = \phi^{(3)}(0) = \phi^{(3)}(0)
\right\}
$$
with the scalar product
$$
( \widetilde{\phi}, \widetilde{\psi})_0
 := \sum_{i=1}^3 \pi \int_0^{\ell_i} h_i^2(x_i)\,
    \frac{d\phi^{(i)}}{d{x_i}} \, \frac{d\psi^{(i)}}{d{x_i}}\, dx_i,
\qquad \widetilde{\phi},\, \widetilde{\psi} \in {\mathcal H}_0.
$$

\begin{definition}
A function $\widetilde{\omega}\in L^2(0, T; {\mathcal H}_0),$
with $\widetilde{\omega}^\prime\in L^2(0, T; {\mathcal H}_0^*),$
is called a weak solution to the problem (\ref{main}) if it satisfies the integral identity
\begin{multline}\label{int-identity_main}
   \pi \sum_{i=1}^3
   \int_0^{\ell_i} h_i^2(x_i) \partial_t \omega^{(i)}(x_i, t) \psi^{(i)}(x_i) \,dx_i
 + ( \widetilde{\omega}, \widetilde{\psi} )_0
 + \delta_{\alpha_0, 0} \big|\Gamma_0\big|_2 \,
   \kappa_0 \big(\omega_0^{(1)}(0, t)\big) \, \psi^{(1)}(0)
\\
 + \sum_{i=1}^3
   \Big(
    \pi \int_0^{\ell_i} h_i^2(x_i) \, k(\omega^{(i)}(x_i, t)) \psi^{(i)}(x_i) \,dx_i
 +  2\pi \delta_{\alpha_i, 1}
    \int_0^{\ell_i} h_i(x_i) \, \kappa_i(\omega^{(i)}(x_i, t), x_i, t) \psi^{(i)}(x_i) \,dx_i
   \Big)
\\
 = \boldsymbol{d_0^*}(t) \, \psi^{(1)}(0)
 + \sum_{i=1}^3
   \int_0^{\ell_i} \widehat{F}_0^{(i)}(x_i, t) \, \psi^{(i)}(x_i)\, dx_i
\end{multline}
for any function $\widetilde{\psi}\in \mathcal{H}_0$ and a.e. $t\in(0, T),$
and $\widetilde{\omega}|_{t=0}=0.$
\end{definition}

Similarly as was done in Section~\ref{exis-uniq},
the integral identity (\ref{int-identity_main}) can be rewritten as follows
\begin{equation*}
     \big\langle \partial_t \widetilde{\omega}, \widetilde{\psi} \big\rangle_0
 +   \big\langle \mathcal{A}_0 (t) \, \widetilde{\omega}, \widetilde{\psi} \big\rangle_0
 =   \big\langle \mathcal{F}_0 (t), \widetilde{\psi} \big\rangle_0,
\end{equation*}
for all $\widetilde{\psi} \in {\mathcal H}_0$ and
a.e. $t\in(0, T),$ and $\widetilde{\omega}|_{t=0}=0.$
Here the nonlinear operator
$\mathcal{A}_0(t) : \mathcal{H}_0  \mapsto \mathcal{H}_0^*$
is defined through the relation
\begin{multline*}
   \big\langle \mathcal{A}_0 (t) \phi^{(i)}, \psi^{(i)} \big\rangle_0
 = ( \widetilde{\phi}, \widetilde{\psi} )_0
 + \delta_{\alpha_0, 0} \big|\Gamma_0\big|_2 \,
   \kappa_0 \big(\omega_0^{(1)}(0, t)\big) \, \psi^{(1)}(0)
\\
 + \sum_{i=1}^3 \left(
    \pi \int_0^{\ell_i} h_i^2 \, k(\phi^{(i)}) \psi^{(i)} \,dx_i
 +  2\pi \, \delta_{\alpha_i, 1} \int_0^{\ell_i}
    h_i \, \kappa_i(\phi^{(i)}, x_i, t) \psi^{(i)} \,dx_i
   \right)
\end{multline*}
for all $\widetilde{\phi}, \widetilde{\psi} \in {\mathcal H}_0,$
and the linear functional $\mathcal{F}_0(t)\in\mathcal{H}_0^*$ is defined by
$$
   \big\langle \mathcal{F}_0 (t), \widetilde{\psi} \big\rangle_0
 = \boldsymbol{d_0^*}(t) \, \psi^{(1)}(0)
 + \sum_{i=1}^3
   \int_0^{\ell_i} \widehat{F}_0^{(i)} \, \psi^{(i)}\, dx_i
\qquad \forall \, \widetilde{\psi} \in {\mathcal H}_0,
$$
where $\langle \cdot, \cdot \rangle_0$ is the duality pairing of the dual space
$\mathcal{H}_0^*$ and $\mathcal{H}_0$.

Using (\ref{kappa_ineq}) and (\ref{kappa_ineq+}), we can prove that the operator $\mathcal{A}_0$
is bounded, strongly monotone, hemicontinuous and coercive.
As a result, the existence and uniqueness of the weak solution to the problem (\ref{main})
follow directly from Corollary 4.1 (see \cite[Chapter 3]{Showalter-book}).

\subsubsection{Problem for $\{\widetilde{\omega}_1\}$}

Let us verify the solvability condition  (\ref{solvability})
for the problem (\ref{junc_probl_general}) at $n=2$. Knowing that $N_0 \equiv \omega_0^{(1)} (0, t)$
and taking into account  the third relation in problem (\ref{regul_probl_2}),
the equality  (\ref{solvability}) can be re-written as follows:
\begin{multline*}
    \sum\limits_{i=1}^3
    \Bigg[ \,
    \pi h_i^2 (0) \, \int\limits_{\ell_0+1}^{\ell_0+2}
    \bigg(
     \xi_i\dfrac{\partial^2\omega_{0}^{(i)}}{\partial{x_i}^2}(0, t)
 +   \dfrac{\partial\omega_{1}^{(i)}}{\partial{x_i}}(0, t)
    \bigg)
    \chi_i^{\prime}(\xi_i) \, d\xi_i
\\
 -  \int\limits_{\ell_0}^{\ell_0+2} (1-\chi_i(\xi_i))
    \int\limits_{\Upsilon_i(0)}
    \left(
     \, \partial_t \omega_0^{(1)}(0, t) + k \big(\omega_0^{(1)}(0, t)\big) - f(0, t)
    \right) \, d\overline{\xi}_i \, d\xi_i
\\
 -  \int\limits_{\ell_0}^{\ell_0+2} (1-\chi_i(\xi_i))
    \int\limits_{\partial\Upsilon_i (0)}
    \left(
     \delta_{\alpha_i, 1} \, \kappa_i \big(\omega_0^{(1)}(0, t), 0, t \big)
 -   \delta_{\beta_i, 1} \, \varphi^{(i)} (\overline{\xi}_i, 0, t)
    \right)
    \, dl_{\overline{\xi}_i}  \, d\xi_i
    \Bigg]
\\
 -  \delta_{\alpha_0, 0} \int\limits_{\Gamma_{0}}
    \kappa_0^{\prime}\big(\omega_0^{(1)}(0, t)\big)N_1(\xi, t) \, d\sigma_{\xi}
 -  \delta_{\alpha_0, 1} \int\limits_{\Gamma_{0}}
    \kappa_0 \big(\omega_0^{(1)}(0, t)\big) \, d\sigma_{\xi}
 +  \delta_{\beta_0, 1} \int\limits_{\Gamma_{0}}
    \varphi^{(0)}(\xi, t) \, d\sigma_{\xi}
\\
 -  \int\limits_{\Xi^{(0)}}
    \left(
     \, \partial_t \omega_0^{(1)}(0, t) + k \big(\omega_0^{(1)}(0, t)\big) - f(0, t)
    \right) \, d\xi
 =  \ 0.
\end{multline*}
Whence, integrating by parts in the first three integrals with regard to (\ref{omega_probl_2}),
we obtain the following relations for $\{\omega_1^{(i)}\}_{i=1}^3:$
\begin{equation}\label{transmisiont1}
    \sum\limits_{i=1}^3 \pi h_i^2 (0) \frac{\partial\omega_{1}^{(i)}}{\partial{x_i}} (0, t)
 =  \boldsymbol{d_{1}^*}(t),
\end{equation}
where
\begin{multline}\label{const_d_*}
\boldsymbol{d_1^*}(t)
 =  - \ell_0 \sum\limits_{i=1}^3
    \Bigg(
     \pi h_i^2(0)
     \left(
      \, \partial_t \omega_0^{(i)}(0, t) + k \big(\omega_0^{(1)}(0, t)\big) - f(0, t)
     \right)
\\
 +   2 \pi \, \delta_{\alpha_i, 1} \, h_i(0)\, \kappa_i \big(\omega_0^{(1)}(0, t), 0, t \big)
 -   \delta_{\beta_i, 1} \int\limits_{\partial\Upsilon_i(0)}
     \varphi^{(i)} (\overline{\xi}_i, 0, t)
     \, dl_{\overline{\xi}_i}
    \Bigg)
 + \delta_{\alpha_0, 0} \, \kappa_0^{\prime}\big(\omega_0^{(1)}(0, t)\big)
    \int\limits_{\Gamma_{0}} N_1(\xi, t) \, d\sigma_{\xi}
\\
 +  \delta_{\alpha_0, 1} \big|\Gamma_0\big|_2 \kappa_0 \big(\omega_0^{(1)}(0, t)\big)
 -  \delta_{\beta_0, 1} \int\limits_{\Gamma_{0}}
    \varphi^{(0)}(\xi, t) \, d\sigma_{\xi}
 +  \big|\Xi^{(0)}\big|_3
    \left(
     \, \partial_t \omega_0^{(1)}(0, t) + k \big(\omega_0^{(1)}(0, t)\big) - f(0, t)
    \right).
\end{multline}

Hence, if the functions $\{\omega_{1}^{(i)}\}_{i=1}^3$ satisfy (\ref{transmisiont1}),
then there exist a weak solution $\widetilde{N}_2$ of the problem~(\ref{junc_probl_general}).
According to Proposition  \ref{tverd1}, it can be chosen in a unique way to guarantee the
asymptotics~(\ref{inner_asympt_general}).

It remains to satisfy the stabilization conditions (\ref{junc_probl_general+cond}) at $n=1$.
For this, we represent a weak solution of the problem (\ref{junc_probl_general}) in the following form:
$$
\widetilde{N}_1 (\xi, t) = \omega_1^{(1)}(0, t) + \widehat{N}_1 (\xi, t), \quad \xi \in \Xi^{(1)}.
$$
Taking into account the asymptotics (\ref{inner_asympt_general}), we have to put
\begin{equation}\label{trans1}
    \omega_1^{(1)}(0, t)
 =  \omega_1^{(2)}(0, t) - \boldsymbol{\delta_1^{(2)}}(t)
 =  \omega_1^{(3)}(0, t) - \boldsymbol{\delta_1^{(3)}}(t).
\end{equation}
As a result, we get the solution of the problem (\ref{junc_probl_n}) with the following asymptotics:
\begin{equation}\label{inner_asympt}
{N}_{1}(\xi, t)
 =  \omega_{1}^{(i)} (0, t) + \Psi_1^{(i)} (\xi, t)
 +  {\mathcal O}(\exp(-\gamma_i\xi_i))
\quad \mbox{as} \ \ \xi_i\to+\infty, \quad i=1,2,3.
\end{equation}

Let us denote by
$$
G_1(\xi, t)
 := \omega_{1}^{(i)}(0, t) + \Psi_1^{(i)}(\xi, t), \quad (\xi, t)\in\Xi^{(i)}\times(0, T),
\quad i=1,2,3.
$$
\begin{remark}\label{rem_exp-decrease}
Due to (\ref{inner_asympt}),
the function ${N}_{1} - G_1$ are exponentially decrease as $\xi_i \to +\infty,$ $i=1,2,3.$
\end{remark}

Relations (\ref{trans1}) and (\ref{transmisiont1})
are the first and second transmission conditions for $\{\omega_1^{(i)}\}_{i=1}^3$ at $x=0.$
Thus, the second term of the regular asymptotics $\widetilde{\omega}_1$ is determined from the linear problem
\begin{equation}\label{omega_probl*}
 \left\{\begin{array}{l}
    \pi h_i^2(x_i) \dfrac{\partial\omega_1^{(i)}}{\partial{t}} (x_i, t)
  - \pi \dfrac{\partial}{\partial{x_i}}
    \left(h_i^2(x_i)\dfrac{\partial\omega_1^{(i)}}{\partial{x_i}}(x_i, t)\right)
  + \pi h_i^2(x_i) k^\prime\Big(\omega_0^{(i)}(x_i, t)\Big)
    \omega_1^{(i)}(x_i, t)
 \\[5mm]
  + \ 2\pi \, \delta_{\alpha_i, 1} \, h_i(x_i)
    \partial_s \kappa_i\Big(\omega_0^{(i)}(x_i, t), x_i, t \Big) \omega_1^{(i)}(x_i, t)
    \hspace{0.26cm}
  = \hspace{0.26cm} \widehat{F}_1^{(i)}(x_i, t),
    \hspace{0.31cm} (x_i, t)\in I_i \times (0, T), \hspace{0.30cm} i=1,2,3,
 \\[4mm]
 \begin{array}{rclrl}
    \omega_1^{(i)}(\ell_i, t) & = &
    0, & t\in(0, T), & i=1,2,3,
 \\[3mm]
    \hspace{1.92cm}
    \omega_1^{(1)} (0, t) \hspace{0.26cm} = \hspace{0.26cm}
    \omega_1^{(2)} (0, t) - \boldsymbol{\delta_1^{(2)}}(t) & = &
    \omega_1^{(3)} (0, t) - \boldsymbol{\delta_1^{(3)}}(t), \hspace{0.17cm} & t\in(0, T), &
 \\[3mm]
    \sum\limits_{i=1}^3 \pi h_i^2 (0) \dfrac{\partial\omega_{1}^{(i)}}{\partial{x_i}} (0, t) & = &
    \boldsymbol{d_1^*}(t), & t\in(0, T), &
 \\[3mm]
    \omega_{1}^{(i)} (x_i, 0)
    & = &
    0, & x_i\in I_i, & i=1,2,3,
 \end{array}
 \end{array}\right.
\end{equation}
where
\begin{multline}\label{omegaF1}
\widehat{F}_1^{(i)}(x_i, t)
 =  \int\limits_{\Upsilon_i(x_i)} f_1^{(i)} (\overline{\xi}_i, x_i, t) \, d{\overline{\xi}_i}
  - 2\pi \, \delta_{\alpha_i, 2} \,  h_i(x_i) \kappa_i\Big(\omega_0^{(i)}(x_i, t), x_i, t \Big)
\\
  + \delta_{\beta_i, 2} \int\limits_{\partial\Upsilon_i(x_i)}
    \varphi^{(i)}(\overline{\xi}_i, x_i, t) \, dl_{\overline{\xi}_i},
\quad (x_i, t)\in I_i \times (0, T), \quad i=1,2,3.
\end{multline}
The values $\boldsymbol{\delta_1^{(2)}}$ and $\boldsymbol{\delta_1^{(3)}}$
are uniquely determined (see Remark~\ref{remark_constant}) by formula
\begin{multline}\label{delta_1}
\boldsymbol{\delta_1^{(i)}}(t)
 =  \int_{\Xi} \mathfrak{N}_i (\xi) \,
    \sum\limits_{j=1}^3
    \bigg(
    \xi_j\dfrac{\partial\omega_0^{(j)}}{\partial{x_j}}(0, t) \chi_j^{\prime\prime}(\xi_j)
 + 2\dfrac{\partial\omega_0^{(j)}}{\partial{x_j}}(0, t) \chi_j^{\prime}(\xi_j)
    \bigg) \, d\xi,
\\
 +  \int_{\Gamma_0} \mathfrak{N}_i(\xi)
    \Big(
 -   \delta_{\alpha_0, 0} \, \kappa_0\big(\omega_0^{(1)}(0, t)\big)
 +   \delta_{\beta_0, 0} \, \varphi^{(0)} (\xi, t)
    \Big) \, d\sigma_\xi,
\quad i=2,3.
\end{multline}

With the help of the substitutions
$
\phi_1^{(1)}(x_1, t) = \omega_1^{(1)}(x_1, t),
$
$
\phi_1^{(2)}(x_2, t) = \omega_1^{(2)}(x_2, t) - \boldsymbol{\delta_1^{(2)}} (t) (\ell_2 - x_2),
$
$
\phi_1^{(3)}(x_3, t) = \omega_1^{(3)}(x_3, t) - \boldsymbol{\delta_1^{(3)}} (t) (\ell_3 - x_3),
$
we reduce the problem (\ref{omega_probl*}) to the respective linear parabolic problem in the space
$L^2\big(0, T; \mathcal{H}_0\big).$ Thus the existence and uniqueness of the solution to the problem (\ref{omega_probl*}) follow from
the classical theory of linear parabolic problems.


\section{Justification}\label{justification}

With the help of  $\widetilde{\omega}_0, \ \widetilde{\omega}_1, \ N_1$
and smooth cut-off functions defined by formulas
\begin{equation}\label{cut-off-functions}
\chi_{\ell_0}^{(i)} (x_i) =
\left\{\begin{array}{ll}
1, & \text{if} \ \ x_i \ge 3 \, \ell_0,
\\
0, & \text{if} \ \ x_i \le 2 \, \ell_0,
\end{array}\right.
\quad i=1, 2, 3,
\end{equation}
we construct the following asymptotic approximation:
\begin{multline}\label{asymp_expansion}
{U}_\varepsilon^{(1)} (x, t)
 = \sum\limits_{i=1}^3 \chi_{\ell_0}^{(i)} \left(\frac{x_i}{\varepsilon^\mathfrak{a}}\right)
    \left( \omega_0^{(i)} (x_i, t) + \varepsilon \, \omega_1^{(i)} (x_i, t) \right)
\\
 +  \left(1 - \sum\limits_{i=1}^3 \chi_{\ell_0}^{(i)}
     \left(\frac{x_i}{\varepsilon^\mathfrak{a}}\right)
    \right)
    \bigg(
    \omega_0^{(1)}(0, t)
 +  \varepsilon N_1 \left( \frac{x}{\varepsilon}, t \right)
    \bigg),
\quad x\in\Omega_\varepsilon\times(0, T),
\end{multline}
where $\mathfrak{a}$ is a fixed number from the interval $\big(\frac23, 1 \big).$

\begin{theorem}\label{mainTheorem}
Let assumptions made in the statement of the problem $(\ref{probl})$ are satisfied.
Then the sum $(\ref{asymp_expansion})$ is the asymptotic approximation for the solution
$u_\varepsilon$ to the boundary-value problem~$(\ref{probl}),$ i.e., \quad
$
      \exists \, {C}_0 >0 \ \ \exists \, \varepsilon_0>0 \ \
      \forall\, \varepsilon\in(0, \varepsilon_0):
$
\begin{equation}\label{t0}
      \max\limits_{t\in[0,T]}
      \left\|
       {U}_\varepsilon^{(1)} (\cdot, t) - u_\varepsilon (\cdot, t)
      \right\|_{L^2(\Omega_\varepsilon)}
  +   \left\|
       {U}_\varepsilon^{(1)} - u_\varepsilon
      \right\|_{L^2(0, T; {H}^1(\Omega_\varepsilon))}
 \leq {C}_0 \, \mu(\varepsilon),
\end{equation}
where $\mu(\varepsilon) = o(\varepsilon)$ as $\varepsilon \to 0$ and
\begin{equation}\label{mu}
\mu(\varepsilon)
 = \bigg(
    \varepsilon^{1+\frac{\mathfrak{a}}2}
 +  \sum_{i=1}^3
     \Big(
      (1-\delta_{\alpha_{i}, 1}) \varepsilon^{\alpha_i}
 +    (1-\delta_{\beta_{i}, 1}) \varepsilon^{\beta_i}
     \Big)
 +   (1-\delta_{\alpha_{0}, 0}) \varepsilon^{\alpha_0+1}
 +   (1-\delta_{\beta_{0}, 0})\varepsilon^{\beta_0+1}
  \bigg).
\end{equation}
\end{theorem}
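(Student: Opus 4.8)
The plan is to control the remainder $R_\varepsilon:=U_\varepsilon^{(1)}-u_\varepsilon$ by the energy method, using the monotonicity built into condition~\textbf{C3} to discard the nonlinear contributions, so that the whole estimate reduces to bounding the discrepancy that $U_\varepsilon^{(1)}$ generates when it is substituted into the weak formulation~(\ref{int-identity}). First I would check that $R_\varepsilon(\cdot,t)\in\mathcal{H}_\varepsilon$: since $\ell_i\ge1>3\ell_0$, on the disk $\Upsilon_\varepsilon^{(i)}(\ell_i)$ one has $\chi_{\ell_0}^{(i)}(\ell_i/\varepsilon^{\mathfrak{a}})=1$, so $U_\varepsilon^{(1)}=\omega_0^{(i)}(\ell_i,t)+\varepsilon\omega_1^{(i)}(\ell_i,t)=0$ there by the Dirichlet conditions in~(\ref{main}) and~(\ref{omega_probl*}); moreover, the zero initial conditions for $\omega_0^{(i)}$, $\omega_1^{(i)}$ and $\Psi_1^{(i)}$ leave only the node contribution $\varepsilon N_1(x/\varepsilon,0)$, whence $\|R_\varepsilon(\cdot,0)\|_{L^2(\Omega_\varepsilon)}=O(\varepsilon^{5/2})$, which is negligible. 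Substituting $U_\varepsilon^{(1)}$ into~(\ref{int-identity}) and subtracting the identity for $u_\varepsilon$, one obtains for all $v\in\mathcal{H}_\varepsilon$ and a.e.\ $t$
\begin{equation*}
\big\langle\partial_t R_\varepsilon,v\big\rangle_\varepsilon+\big\langle\mathcal{A}_\varepsilon(t)U_\varepsilon^{(1)}-\mathcal{A}_\varepsilon(t)u_\varepsilon,\,v\big\rangle_\varepsilon=\big\langle\mathcal{D}_\varepsilon(t),v\big\rangle_\varepsilon ,
\end{equation*}
where the discrepancy $\mathcal{D}_\varepsilon(t)\in\mathcal{H}_\varepsilon^*$ collects the residuals of $U_\varepsilon^{(1)}$ in the parabolic equation, in the Robin condition on $\Gamma_\varepsilon^{(0)}$, and in the Robin conditions on $\Gamma_\varepsilon^{(i)}$. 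Taking $v=R_\varepsilon(\cdot,t)$, using~(\ref{kappa_ineq}) to drop the three nonnegative difference terms built from $k$, $\kappa_0$, $\kappa_i$, and invoking~(\ref{equivalent_norm}), I get the energy inequality
\begin{equation*}
\tfrac12\,\frac{d}{dt}\big\|R_\varepsilon(\cdot,t)\big\|_{L^2(\Omega_\varepsilon)}^2+\big\|R_\varepsilon(\cdot,t)\big\|_\varepsilon^2\le\big|\big\langle\mathcal{D}_\varepsilon(t),R_\varepsilon(\cdot,t)\big\rangle_\varepsilon\big| .
\end{equation*}

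The heart of the proof is the bound $|\langle\mathcal{D}_\varepsilon(t),v\rangle_\varepsilon|\le C\,\mu(\varepsilon)\,\|v\|_\varepsilon$ for all $v\in\mathcal{H}_\varepsilon$. I would split $\Omega_\varepsilon$ into the pure cylinder parts where $\chi_{\ell_0}^{(i)}(x_i/\varepsilon^{\mathfrak{a}})\equiv1$, the node $\Omega_\varepsilon^{(0)}$, and the thin transition zones $\{2\ell_0\varepsilon^{\mathfrak{a}}\le x_i\le3\ell_0\varepsilon^{\mathfrak{a}}\}$ where the cut-offs vary. On the cylinder parts $U_\varepsilon^{(1)}=\omega_0^{(i)}+\varepsilon\omega_1^{(i)}$ depends only on $x_i$, so after Taylor-expanding $f$, $k$, $\kappa_i$, $\varphi^{(i)}$ the interior residual equals $\Delta_{\overline{\xi}_i}u_2^{(i)}+\varepsilon\Delta_{\overline{\xi}_i}u_3^{(i)}+O(\varepsilon^2)$ (by~(\ref{eq-1}) and~(\ref{regul_probl_3})), while the lateral residual on $\Gamma_\varepsilon^{(i)}$ equals $-\varepsilon\,\partial_{\boldsymbol{\nu}_{\overline{\xi}_i}}u_2^{(i)}+(1-\delta_{\alpha_i,1})\varepsilon^{\alpha_i}\kappa_i(\omega_0^{(i)},x_i,t)-(1-\delta_{\beta_i,1})\varepsilon^{\beta_i}\varphi^{(i)}+\varepsilon(\cdots)+O(\varepsilon^2)$ (by~(\ref{eq-2}) and the boundary line of~(\ref{regul_probl_2})). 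Pairing these with $v$ and integrating by parts in the cross-sectional variable $\overline{x}_i$ — recall that the one-dimensional equations~(\ref{omega_probl_2}),~(\ref{omega_probl_3}) are exactly the solvability conditions of the cell problems~(\ref{regul_probl_2}),~(\ref{regul_probl_3}) — the interior and lateral contributions cancel up to the gradient pairing $-\varepsilon\int\nabla_{\overline{\xi}_i}u_2^{(i)}\cdot\nabla_{\overline{x}_i}v\,dx$, which is $O(\varepsilon^2)\|v\|_\varepsilon$; the terms carrying $(1-\delta_{\alpha_i,1})$ and $(1-\delta_{\beta_i,1})$ are estimated with the trace inequalities~(\ref{ineq1})--(\ref{ineq-2}) and give the orders $\varepsilon^{\alpha_i}$ and $\varepsilon^{\beta_i}$.

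On the node $\Omega_\varepsilon^{(0)}$, $U_\varepsilon^{(1)}=\omega_0^{(1)}(0,t)+\varepsilon N_1(x/\varepsilon,t)$ with $N_0\equiv\omega_0^{(1)}(0,t)$ and $N_1$ harmonic (it solves~(\ref{junc_probl_n}) at $n=1$ with $F_1\equiv0$), so the residuals in the equation and in the conditions on the $\Gamma_i$ come only from the $O(1)$ Taylor remainders of $f$, $k$, $\kappa_i$ on a set of volume $O(\varepsilon^3)$ and a surface of area $O(\varepsilon^2)$, contributing $o(\varepsilon)\|v\|_\varepsilon$ via the near-node trace bounds~(\ref{ineq1}),~(\ref{traceineq0}) and the estimate $\|v\|_{L^2(\Omega_\varepsilon^{(0)})}\le C\varepsilon^{1/2}\|v\|_\varepsilon$; by the identity $\partial_{\boldsymbol{\nu}_\xi}N_1=-\delta_{\alpha_0,0}\kappa_0(N_0)+\delta_{\beta_0,0}\varphi^{(0)}$ on $\Gamma_0$, the residual in the Robin condition on $\Gamma_\varepsilon^{(0)}$ reduces to terms of pointwise size $(1-\delta_{\alpha_0,0})\varepsilon^{\alpha_0}$ and $(1-\delta_{\beta_0,0})\varepsilon^{\beta_0}$ on a surface of area $O(\varepsilon^2)$, which by~(\ref{traceineq0}) and~(\ref{ineq-3}) produce the orders $(1-\delta_{\alpha_0,0})\varepsilon^{\alpha_0+1}$ and $(1-\delta_{\beta_0,0})\varepsilon^{\beta_0+1}$. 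In the transition zones the cut-off commutators $\Delta\!\big(\chi_{\ell_0}^{(i)}(x_i/\varepsilon^{\mathfrak{a}})\big)\big(\text{cylinder part}-\text{node part}\big)$ and $\nabla\chi_{\ell_0}^{(i)}\cdot\nabla(\cdots)$ appear; here $\Delta\chi_{\ell_0}^{(i)}=O(\varepsilon^{-2\mathfrak{a}})$, but by the matching relations~(\ref{Psi_k}),~(\ref{trans0}),~(\ref{trans1}),~(\ref{inner_asympt}) and the exponential stabilization of $N_1-G_1$ (Remark~\ref{rem_exp-decrease}) the mismatch between the regular and the inner expansions is $O(\varepsilon^{2\mathfrak{a}})$ on a set of volume $O(\varepsilon^{2+\mathfrak{a}})$, which produces the leading term $\varepsilon^{1+\mathfrak{a}/2}$ of $\mu(\varepsilon)$. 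Summing all contributions gives $|\langle\mathcal{D}_\varepsilon(t),v\rangle_\varepsilon|\le C\,\mu(\varepsilon)\,\|v\|_\varepsilon$ with $\mu(\varepsilon)$ as in~(\ref{mu}).

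Combining this with the energy inequality, $\tfrac12\tfrac{d}{dt}\|R_\varepsilon(\cdot,t)\|_{L^2(\Omega_\varepsilon)}^2+\|R_\varepsilon(\cdot,t)\|_\varepsilon^2\le C\mu(\varepsilon)\|R_\varepsilon(\cdot,t)\|_\varepsilon$; Cauchy's inequality with a small parameter absorbs $\tfrac12\|R_\varepsilon\|_\varepsilon^2$ into the left-hand side, and integration in $t$ over $(0,\tau)$ together with the negligible initial value gives $\max_{t\in[0,T]}\|R_\varepsilon(\cdot,t)\|_{L^2(\Omega_\varepsilon)}^2+\int_0^T\|R_\varepsilon(\cdot,t)\|_\varepsilon^2\,dt\le C_0^2\,\mu^2(\varepsilon)$; passing from $\|\cdot\|_\varepsilon$ to $\|\cdot\|_{H^1(\Omega_\varepsilon)}$ by~(\ref{equivalent_norm}) yields~(\ref{t0}), and since $\mathfrak{a}>2/3$ while $\alpha_i,\beta_i>1$ and $\alpha_0,\beta_0>0$ whenever the corresponding Kronecker symbol in~(\ref{mu}) is not $1$, every summand of~(\ref{mu}) is $o(\varepsilon)$, so $\mu(\varepsilon)=o(\varepsilon)$. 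The step I expect to be the main obstacle is the discrepancy estimate on the gluing region: one must simultaneously exploit that the one-dimensional limit equations are the Fredholm solvability conditions of the cross-sectional cell problems — so that the averaged cylinder residuals cancel and the leftover gains a power of $\varepsilon$ after integration by parts across the cross-section — and control the derivatives of $\chi_{\ell_0}^{(i)}(\cdot/\varepsilon^{\mathfrak{a}})$, which blow up like $\varepsilon^{-2\mathfrak{a}}$, against the mismatch between the regular and inner expansions, which is only $O(\varepsilon^{2\mathfrak{a}})$ by virtue of the matching conditions; the restriction $\mathfrak{a}\in(\tfrac23,1)$ is precisely what keeps both bounds $o(\varepsilon)$ and, together with the absence of the correctors $u_2^{(i)}$, $u_3^{(i)}$, $N_2$ from the ansatz~(\ref{asymp_expansion}), dictates the precise form~(\ref{mu}) of $\mu(\varepsilon)$.
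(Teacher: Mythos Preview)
Your proposal is correct and follows essentially the same approach as the paper: subtract the weak formulation of $u_\varepsilon$ from that of $U_\varepsilon^{(1)}$, use the strong monotonicity of $\mathcal{A}_\varepsilon$ to discard the nonlinear differences, and reduce everything to estimating the discrepancy $\langle\mathcal{D}_\varepsilon,v\rangle_\varepsilon$ by splitting $\Omega_\varepsilon$ into the pure cylinder parts, the node, and the transition zones, exploiting the cell problems~(\ref{regul_probl_2}),~(\ref{regul_probl_3}) via cross-sectional integration by parts and the exponential decay of $N_1-G_1$. The paper carries this out by writing the total residual explicitly as a sum of twelve pieces $R_{\varepsilon,1},\dots,R_{\varepsilon,12}$ and bounding each one, but the organizing principle is identical to yours; one small simplification in the paper is that it observes $N_1|_{t=0}=0$ exactly (since at $t=0$ all data in~(\ref{junc_probl_n}) for $n=1$ vanish), so $U_\varepsilon^{(1)}|_{t=0}=0$ and no initial residual needs to be tracked.
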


\begin{proof}
Substituting ${U}_\varepsilon^{(0)}$
in the equations and the boundary conditions of problem~(\ref{probl}), we find
\begin{equation}\label{nevyazka}
\left\{\begin{array}{rclll}
   \partial_t {U}_\varepsilon^{(1)}
 - \Delta_x {U}_\varepsilon^{(1)}
 + k\Big({U}_\varepsilon^{(1)}\Big)
 - f
 & = & \widehat{R}_\varepsilon               &
   \mbox{in} \ \Omega_\varepsilon\times(0, T), &
\\[2mm]
   \partial_\nu {U}_\varepsilon^{(1)}
 + \varepsilon^{\alpha_0} \kappa_0\Big( {U}_\varepsilon^{(1)}\Big)
 - \varepsilon^{\beta_0} \varphi_\varepsilon^{(0)}
 & = & \breve{R}_{\varepsilon}^{(0)} &
   \mbox{on} \ \Gamma_\varepsilon^{(0)}\times(0, T), &
\\[2mm]
   \partial_\nu {U}_\varepsilon^{(1)}
 + \varepsilon^{\alpha_i} \kappa_i\Big( {U}_\varepsilon^{(1)}, x_i, t \Big)
 - \varepsilon^{\beta_i} \varphi_\varepsilon^{(i)}
 & = & \breve{R}_{\varepsilon}^{(i)} &
   \mbox{on} \ \Gamma_\varepsilon^{(i)}\times(0, T), &                      i=1,2,3,
\\[2mm]
   {U}_\varepsilon^{(1)}               & = & 0 &
   \mbox{on} \ \Upsilon_\varepsilon^{(i)}(\ell_i)\times(0, T), &                 i=1,2,3,
\end{array}\right.
\end{equation}
where
\begin{multline*}
\widehat{R}_{\varepsilon}(x, t)
 =- \sum\limits_{i=1}^3
    \Bigg(
     2 \varepsilon^{-\mathfrak{a}}
     \frac{d \chi_{\ell_0}^{(i)}}{d\zeta_i} (\zeta_i)
     \bigg|_{\zeta_i=\frac{x_i}{\varepsilon^\mathfrak{a}}}
     \bigg(
      \dfrac{\partial\omega_{0}^{(i)}}{\partial x_i} (x_i, t)
 -    \dfrac{\partial\omega_{0}^{(i)}}{\partial x_i} (0, t)
 +    \varepsilon \dfrac{\partial\omega_{1}^{(i)}}{\partial x_i} (x_i, t)
\\
 -    \Big(
       \frac{\partial{N}_{1}}{\partial{\xi_i}}(\xi, t)
 -     \frac{\partial{G}_{1}}{\partial{\xi_i}}(\xi, t)
      \Big)\Big|_{\xi=\frac{x}{\varepsilon}}
     \bigg)
\\
 +   \varepsilon^{-2\mathfrak{a}}
     \frac{d^2 \chi_{\ell_0}^{(i)}}{d\zeta_i^2} (\zeta_i)
     \bigg|_{\zeta_i=\frac{x_i}{\varepsilon^\mathfrak{a}}}
     \bigg(
      \omega_{0}^{(i)}(x_i, t)
 -    \omega_{0}^{(i)}(0, t)
 -    x_i \frac{\partial \omega_{0}^{(i)}}{\partial x_i}(0, t)
 +    \varepsilon \omega_{1}^{(i)}(x_i, t)
 -    \varepsilon \omega_{1}^{(i)}(0, t)
\\
 -    \varepsilon {N}_{1}\Big(\frac{x}{\varepsilon}, t\Big)
 +    \varepsilon G_{1}\Big(\frac{x}{\varepsilon}, t\Big)
     \bigg)
\\
 +   \chi_\ell^{(i)} \left(\frac{x_i}{\varepsilon^\mathfrak{a}}\right)
     \bigg(
 -    \dfrac{\partial\omega_{0}^{(i)}}{\partial t} (x_i, t)
 +    \dfrac{\partial^2\omega_{0}^{(i)}}{\partial x_i^2} (x_i, t)
 -    \varepsilon \dfrac{\partial\omega_{1}^{(i)}}{\partial t} (x_i, t)
 +    \varepsilon \dfrac{\partial^2\omega_{1}^{(i)}}{\partial x_i^2} (x_i, t)
     \bigg)
    \Bigg)
\\
 +  \left(1 - \sum\limits_{i=1}^3 \chi_{\ell_0}^{(i)}
     \left(\frac{x_i}{\varepsilon^\mathfrak{a}}\right)
    \right)
    \left(
    \dfrac{\partial \omega_{0}^{(i)}}{\partial t} (0, t)
 +  \varepsilon \dfrac{\partial N_1}{\partial t} \left( \frac{x}{\varepsilon}, t \right)
    \right)
 +  k\Big({U}_\varepsilon^{(1)}(x, t)\Big)
 -  f(x, t),
\end{multline*}
and
\begin{multline*}
\breve{R}_{\varepsilon}^{(0)}(x, t)
 =  \varepsilon^{\alpha_0} \kappa_0\Big({U}_\varepsilon^{(1)}(x, t)\Big)
 -  \delta_{\alpha_0, 0} \, \kappa_0\big(\omega_0^{(1)}(0, t)\big)
 -  \varepsilon^{\beta_0} \varphi_\varepsilon^{(0)}(x, t)
 +  \delta_{\beta_0, 0} \, \varphi_\varepsilon^{(0)} (x, t),
\\
\breve{R}_{\varepsilon}^{(i)}(x, t)
 =  - \, \frac{\varepsilon h_i^\prime (x_i)}{\sqrt{ 1 + \varepsilon^2 |h_i^\prime(x_i)|^2 \, }}
    \chi_{\ell_0}^{(i)} \left(\frac{x_i}{\varepsilon^\mathfrak{a}}\right)
    \left(
     \frac{\partial\omega_0^{(i)}}{\partial x_i} (x_i, t)
 +   \varepsilon \frac{\partial\omega_1^{(i)}}{\partial x_i} (x_i, t)
    \right)
\\
 +  \varepsilon^{\alpha_i} \kappa_i\Big({U}_\varepsilon^{(1)}(x, t), x_i, t \Big)
 -  \varepsilon^{\beta_i} \varphi_\varepsilon^{(i)}(x, t), \qquad i=1,2,3.
\end{multline*}

Since
$$
\omega_0^{(i)}(x_i, 0) = \omega_1^{(i)}(x_i, 0) = 0, \qquad
x_i\in I_i, \quad i=1,2,3,
$$
it follows from (\ref{junc_probl_n}) at $n=1$ that $N_1\big|_{t=0} = 0.$
As result, asymptotic approximation (\ref{asymp_expansion})
leaves no residuals in the initial condition, i.e.,
\begin{equation*}\label{initial}
   {U}_\varepsilon^{(1)}\big|_{t=0} \ = \ 0 \qquad
   \mbox{in} \ \Omega_\varepsilon.
\end{equation*}

From (\ref{nevyazka}) we derive the following integral relation:
\begin{multline}\label{int-identity_U}
     \int_{\Omega_\varepsilon}
     \partial_t {U}_\varepsilon^{(1)} v \, dx
 +   \int_{\Omega_\varepsilon}
     \nabla {U}_\varepsilon^{(1)} \cdot \nabla v \, dx
 +   \int_{\Omega_\varepsilon} k({U}_\varepsilon^{(1)}) v \, dx
 +   \varepsilon^{\alpha_0}
     \int_{\Gamma_\varepsilon^{(0)}} \kappa_0({U}_\varepsilon^{(1)}) v \, d\sigma_x
\\
 +   \sum_{i=1}^3 \varepsilon^{\alpha_i}
     \int_{\Gamma_\varepsilon^{(i)}} \kappa_i({U}_\varepsilon^{(1)}, x_i, t) v \, d\sigma_x
 -   \int_{\Omega_\varepsilon} f \, v \, dx
 -   \sum_{i=0}^3 \varepsilon^{\beta_i}
     \int_{\Gamma_\varepsilon^{(i)}} \varphi_\varepsilon^{(i)} \, v \, d\sigma_x
 =   R_\varepsilon(v),
\end{multline}
for all $v\in L^2(0, T; \mathcal{H}_\varepsilon)$ and a.e. $t\in(0, T).$
Here
$$
R_\varepsilon(v)
 =  \int_{\Omega_\varepsilon} \widehat{R}_\varepsilon \, v \,dx
 +  \sum \limits_{i=0}^3 \
    \int_{\Gamma_\varepsilon^{(i)}}
    \breve{R}_{\varepsilon}^{(i)} \, v \, d\sigma_x.
$$

From (\ref{regul_probl_2}) and (\ref{regul_probl_3}) we deduce that integral identities
\begin{multline}\label{int-identity_u_2}
     \int_{\Upsilon_i(x_i)}
     \bigg(
 -    \dfrac{\partial\omega_{0}^{(i)}}{\partial t}
 +    \frac{\partial^2\omega_0^{(i)}}{\partial x_i^2}
     \bigg) \eta \ d\overline{\xi}_i
 =   \int_{\Upsilon_i(x_i)}
     \nabla_{\overline{\xi}_i} u_2^{(i)} \cdot \nabla_{\overline{\xi}_i} \eta \ d\overline{\xi}_i
 -   \int_{\partial\Upsilon_i(x_i)}
     h_i^\prime \, \frac{\partial \omega_0^{(i)}}{\partial x_i} \eta \ dl_{\overline{\xi}_i}
\\
 +   \int_{\Upsilon_i(x_i)} k\big(\omega_0^{(i)}\big) \eta \ d\overline{\xi}_i
 +   \delta_{\alpha_i, 1} \int_{\partial\Upsilon_i(x_i)}
     \kappa_i\big(\omega_0^{(i)}, x_i, t \big) \eta \,dl_{\overline{\xi}_i}
 -   \int_{\Upsilon_i(x_i)} f_0^{(i)} \eta \ d\overline{\xi}_i
 -   \delta_{\beta_i, 1} \int_{\partial\Upsilon_i(x_i)} \varphi^{(i)} \eta \ dl_{\overline{\xi}_i}
\end{multline}
and
\begin{multline}\label{int-identity_u_3}
     \int_{\Upsilon_i(x_i)}
     \bigg(
 -    \dfrac{\partial\omega_{1}^{(i)}}{\partial t}
 +    \frac{\partial^2\omega_1^{(i)}}{\partial x_i^2}
     \bigg) \eta \ d\overline{\xi}_i
 =   \int_{\Upsilon_i(x_i)}
     \nabla_{\overline{\xi}_i} u_3^{(i)} \cdot \nabla_{\overline{\xi}_i} \eta \ d\overline{\xi}_i
 -   \int_{\partial\Upsilon_i(x_i)}
     h_i^\prime \, \frac{\partial \omega_1^{(i)}}{\partial x_i} \eta \ dl_{\overline{\xi}_i}
\\
 +   \int_{\Upsilon_i(x_i)}
     k^\prime \big(\omega_0^{(i)}\big) \omega_1^{(i)} \eta \ d\overline{\xi}_i
 +   \delta_{\alpha_i, 1} \int_{\partial\Upsilon_i(x_i)}
     \partial_s \kappa_i \big(\omega_0^{(i)}, x_i, t \big)
     \omega_1^{(i)} \eta \,dl_{\overline{\xi}_i}
 -   \int_{\Upsilon_i(x_i)} f_1^{(i)} \eta \ d\overline{\xi}_i
\\
 +   \delta_{\alpha_i, 2} \int_{\partial\Upsilon_i(x_i)}
     \kappa_i\big(\omega_0^{(i)}, x_i, t \big) \eta \,dl_{\overline{\xi}_i}
 -   \delta_{\beta_i, 2} \int_{\partial\Upsilon_i(x_i)} \varphi^{(i)} \eta \ dl_{\overline{\xi}_i}
\end{multline}
hold for all
$\eta\in H^1(\Upsilon_i(x_i))$ and for all
$(x_i, t) \in I_\varepsilon^{(i)} \times (0, T), \quad i=1,2,3.$

Using (\ref{int-identity_u_2}) and (\ref{int-identity_u_3}),  we rewrite $R_\varepsilon$ in the form
$$
R_\varepsilon(v)
 =  \sum_{j=1}^{12} R_{\varepsilon, j}(v),
$$
where
\begin{equation*}
R_{\varepsilon, 1} (v)
 =   \int_{\Omega_\varepsilon}
     \bigg(
      k \big( U_\varepsilon^{(1)}(x, t)\big)
 -    \sum_{i=1}^3 \chi_{\ell_0}^{(i)} \left(\frac{x_i}{\varepsilon^\mathfrak{a}}\right)
      \left(
       k \big( \omega_0^{(i)}(x_i, t)\big)
 +     \varepsilon \, k^\prime \big( \omega_0^{(i)}(x_i, t)\big) \omega_1^{(i)}(x_i, t)
      \right)
     \bigg)
     v (x) \,dx,
\end{equation*}
\begin{equation*}
R_{\varepsilon, 2} (v)
 =-  \int_{\Omega_\varepsilon}
     \left(
      f(x, t)
 -    \sum_{i=1}^3 \chi_{\ell_0}^{(i)} \left(\frac{x_i}{\varepsilon^\mathfrak{a}}\right)
      \left(
       f_0^{(i)}(x_i, t)
 +     \varepsilon f_1^{(i)}\Big(\frac{\overline{x}_i}{\varepsilon}, x_i, t \Big)
      \right)
     \right) v (x) \,dx,
\end{equation*}
\begin{equation*}
R_{\varepsilon, 3} (v)
 =   \varepsilon^{\alpha_0} \int_{\Gamma_\varepsilon^{(0)}}
     \Big(
      \kappa_0 \big(U_\varepsilon^{(1)}(x, t)\big)
 -    \delta_{\alpha_0, 0} \, \kappa_0\big(\omega_0^{(1)}(0, t)\big)
     \Big) v (x) \, d\sigma_x
 -   \varepsilon^{\beta_0} \int_{\Gamma_\varepsilon^{(0)}}
     ( 1 - \delta_{\beta_0, 0}) \, \varphi_\varepsilon^{(0)} (x, t) \, v (x) \,d\sigma_x,
\end{equation*}
\begin{multline*}
R_{\varepsilon, 4} (v)
 =   \sum_{i=1}^3 \varepsilon^{\alpha_i}
     \int_{\Gamma_\varepsilon^{(i)}}
     \bigg(
      \kappa_i \big( U_\varepsilon^{(1)}(x, t), x_i, t \big)
 -    \chi_{\ell_0}^{(i)}\left(\frac{x_i}{\varepsilon^\mathfrak{a}}\right)
      \Big(
       \delta_{\alpha_i, 1} \kappa_i \big( \omega_0^{(i)}(x_i, t), x_i, t \big)
\\
 +     \varepsilon \ \delta_{\alpha_i, 1}
       \partial_s \kappa_i \big( \omega_0^{(i)}(x_i, t), x_i, t \big) \omega_1^{(i)}(x_i, t)
 +     \delta_{\alpha_i, 2}
       \kappa_i \big( \omega_0^{(i)}(x_i, t), x_i, t \big)
      \Big)
     \bigg) v (x) \,d\sigma_x,
\end{multline*}
\begin{equation*}
R_{\varepsilon, 5} (v)
 = - \sum_{i=1}^3 \varepsilon^{\beta_i}
     \int_{\Gamma_\varepsilon^{(i)}}
     \bigg(
      1 - \chi_{\ell_0}^{(i)}
      \left(\frac{x_i}{\varepsilon^\mathfrak{a}}\right)
      (\delta_{\beta_i, 1} + \delta_{\beta_i, 2})
     \bigg)
     \varphi_\varepsilon^{(i)} (x, t) \, v (x) \,d\sigma_x,
\end{equation*}
\begin{equation*}
R_{\varepsilon, 6} (v)
 =  \int_{\Omega_\varepsilon}
    \bigg(1 - \sum\limits_{i=1}^3 \chi_{\ell_0}^{(i)}
     \left(\frac{x_i}{\varepsilon^\mathfrak{a}}\right)
    \bigg)
    \left(
    \dfrac{\partial \omega_{0}^{(i)}}{\partial t} (0, t)
 +  \varepsilon \dfrac{\partial N_1}{\partial t} \left( \frac{x}{\varepsilon}, t \right)
    \right)
    v(x) \ dx,
\end{equation*}
\begin{equation*}
R_{\varepsilon, 7} (v)
 =     \varepsilon \sum_{i=1}^3
       \int_{\Gamma_\varepsilon^{(i)}}
       h_i^\prime (x_i)
       \left(
        \frac{\partial\omega_0^{(i)}}{\partial x_i} (x_i, t)
 +      \varepsilon \frac{\partial\omega_1^{(i)}}{\partial x_i} (x_i, t)
       \right)
       \left(
        1 - \frac{1}{\sqrt{ 1 + \varepsilon^2 |h_i^\prime(x_i)|^2 \, }}
       \right)
       \chi_{\ell_0}^{(i)} \left(\frac{x_i}{\varepsilon^\mathfrak{a}}\right)
       v (x) \,d\sigma_x,
\end{equation*}
\begin{equation*}
R_{\varepsilon, 8} (v)
 =-  2 \varepsilon^{-\mathfrak{a}} \sum_{i=1}^3
     \int_{\Omega_\varepsilon}
     \frac{d \chi_{\ell_0}^{(i)}}{d\zeta_i} (\zeta_i)
     \bigg|_{\zeta_i=\frac{x_i}{\varepsilon^\mathfrak{a}}}
     \bigg(
      \dfrac{\partial\omega_{0}^{(i)}}{\partial x_i} (x_i, t)
 -    \dfrac{\partial\omega_{0}^{(i)}}{\partial x_i} (0, t)
 +    \varepsilon \dfrac{\partial\omega_{1}^{(i)}}{\partial x_i} (x_i, t)
     \bigg)
     v (x) \,dx,
\end{equation*}
\begin{multline*}
R_{\varepsilon, 9} (v)
 =-    \varepsilon^{-2\mathfrak{a}} \sum_{i=1}^3
       \int_{\Omega_\varepsilon}
       \frac{d^2 \chi_{\ell_0}^{(i)}}{d\zeta_i^2} (\zeta_i)
       \bigg|_{\zeta_i=\frac{x_i}{\varepsilon^\mathfrak{a}}}
\\
 \cdot \bigg(
        \omega_{0}^{(i)}(x_i, t)
 -      \omega_{0}^{(i)}(0, t)
 -      x_i \frac{\partial \omega_{0}^{(i)}}{\partial x_i}(0, t)
 +      \varepsilon \omega_{1}^{(i)}(x_i, t)
 -      \varepsilon \omega_{1}^{(i)}(0, t)
       \bigg)
       v (x) \,dx,
\end{multline*}
\begin{equation*}
R_{\varepsilon, 10} (v)
 =-  \varepsilon^{2} \sum_{i=1}^3
     \int_{I_\varepsilon^{(i)}}
     \int_{\Upsilon_i(x_i)}
     \chi_{\ell_0}^{(i)} \left(\frac{x_i}{\varepsilon^\mathfrak{a}}\right)
     \nabla_{\overline{\xi}_i} u_2^{(i)} (\overline{\xi}_i, x_i, t)
     \cdot \nabla_{\overline{\xi}_i} v (x, t) \ d\overline{\xi}_i
     \,dx_i,
\end{equation*}
\begin{equation*}
R_{\varepsilon, 11} (v)
 =-  \varepsilon^{3} \sum_{i=1}^3
     \int_{I_\varepsilon^{(i)}}
     \int_{\Upsilon_i(x_i)}
     \chi_{\ell_0}^{(i)} \left(\frac{x_i}{\varepsilon^\mathfrak{a}}\right)
     \nabla_{\overline{\xi}_i} u_3^{(i)} ( \overline{\xi}_i, x_i, t)
     \cdot \nabla_{\overline{\xi}_i} v (x, t) \ d\overline{\xi}_i
     \,dx_i,
\end{equation*}
\begin{multline*}
R_{\varepsilon, 12} (v)
 =- \sum\limits_{i=1}^3
    \int_{\Omega_\varepsilon}
    \Bigg(
   2\varepsilon^{-\mathfrak{a}}
    \frac{d\chi_{\ell_0}^{(i)}}{d\zeta_i}(\zeta_i)
    \bigg(
    \frac{\partial{N}_{1}}{\partial{\xi_i}}(\xi, t)
 -  \frac{\partial{G}_{1}}{\partial{\xi_i}}(\xi, t)
    \bigg)
\\
 +  \varepsilon^{1-2\mathfrak{a}}
    \frac{d^2\chi_{\ell_0}^{(i)}}{d\zeta_i^2}(\zeta_i)
    \Big( {N}_{1}(\xi, t) - G_{1}(\xi, t) \Big)
    \Bigg)
    \Bigg|_{\zeta_i=\frac{x_i}{\varepsilon^{\mathfrak{a}}},\, \xi=\frac{x}{\varepsilon}}
    v(x) \ dx.
\end{multline*}

Let us estimate the value $R_\varepsilon.$
Using (\ref{ineq1}), (\ref{ineq-3}) and (\ref{kappa_ineq+}), we deduce the following estimates:
\begin{equation}\label{R1R2+}
     | R_{\varepsilon, j}(v) |
 \leq \check{C}
      \left(
       \sum_{i=1}^3 \sqrt{ \pi \ell_i \max\limits_{x_i\in I_i} h_i^2(x_i) \ } \varepsilon^2
 +     \sqrt{|\Xi^{(0)}|_3
 +     3 \pi {\ell_0} \sum_{i=1}^3 h_i^2(0)\ } \varepsilon^{1+\frac{\mathfrak{a}}2}
      \right)   \| v \|_{L^2(\Omega_\varepsilon)},
\quad j=1,2,
\end{equation}
\begin{equation}\label{R3+}
     | R_{\varepsilon, 3}(v) |
 \leq \check{C} \sqrt{| \Gamma_0 \, |_2} \,
      \big(
       \varepsilon^{\alpha_0 + \delta_{\alpha_0, 0}}
 +     \varepsilon^{\beta_0} ( 1 - \delta_{\beta_0, 0})
      \big) \, \varepsilon \, \| v \|_{H^1(\Omega_\varepsilon)},
\end{equation}
\begin{equation}\label{R4+}
     | R_{\varepsilon, 4}(v) |
 \leq \check{C} \sum_{i=1}^3
      \left(
       \sqrt{2 \pi \ell_i \max\limits_{x_i\in I_i} h_i(x_i) \ }
       \varepsilon^{\alpha_i + (\delta_{\alpha_i, 1} + \delta_{\alpha_i, 2})}
 +     \sqrt{6 \pi {\ell_0} h_i(0) \ }
       \varepsilon^{\alpha_i + \frac{\mathfrak{a}}2}
      \right)   \| v \|_{H^1(\Omega_\varepsilon)},
\end{equation}
\begin{multline}\label{R5+}
     | R_{\varepsilon, 5}(v) |
 \leq \check{C} \sum_{i=1}^3
      \bigg(
       (1 - \delta_{\beta_i, 1} - \delta_{\beta_i, 2})
       \sqrt{2 \pi \ell_i \max\limits_{x_i\in I_i} h_i(x_i) \ }
       \varepsilon^{\beta_i}
\\
 +     (\delta_{\beta_i, 1} + \delta_{\beta_i, 2})
       \sqrt{6 \pi {\ell_0} h_i(0) \ }
       \varepsilon^{\beta_i + \frac{\mathfrak{a}}2}
      \bigg)   \| v \|_{H^1(\Omega_\varepsilon)},
\end{multline}
\begin{equation}\label{R6+}
       | R_{\varepsilon, 6}(v) |
 \leq \check{C}
      \sqrt{|\Xi^{(0)}|_3 + 3 \pi {\ell_0} \sum_{i=1}^3 h_i^2(0)\ } \cdot
      \varepsilon^{1+\frac{\mathfrak{a}}2} \, \| v \|_{L^2(\Omega_\varepsilon)},
\end{equation}
\begin{equation}\label{R7+}
    |  R_{\varepsilon, 7}(v) |
 \leq \check{C}
      \sum_{i=1}^3 \sqrt{2 \pi \ell_i \max\limits_{x_i\in I_i} h_i(x_i)\ } \varepsilon^3 \,
      \| v \|_{H^1(\Omega_\varepsilon)},
\end{equation}
\begin{equation}\label{R8R9+}
     | R_{\varepsilon, j}(v) |
 \leq \check{C} \sum_{i=1}^3 \sqrt{ \pi {\ell_0} h_i^2(0) \ }
      \ \varepsilon^{1 + \frac{\mathfrak{a}}2} \,
      \| v \|_{L^2(\Omega_\varepsilon)},
\quad j=8,9,
\end{equation}
\begin{equation}\label{R10R11+}
    |  R_{\varepsilon, 10}(v) |
 \leq \check{C} \varepsilon^2 \,
      \| \nabla_x v \|_{L^2(\Omega_\varepsilon)},
\qquad
     | R_{\varepsilon, 11}(v) |
 \leq \check{C} \varepsilon^3 \,
      \| \nabla_x v \|_{L^2(\Omega_\varepsilon)}.
\end{equation}
Due to the exponential decreasing of functions ${N}_{1} - {G}_{1}$
(see Remark~\ref{rem_exp-decrease}) and the fact that the support
of the derivative of $\chi_{\ell_0}^{(i)}$ belongs to the set
$\{x_i: 2 {\ell_0} \varepsilon^\mathfrak{a} \le x_i \le 3 {\ell_0} \varepsilon^\mathfrak{a}\},$
we arrive that
\begin{equation}\label{R12+}
     | R_{\varepsilon, 12}(v) |
 \leq \check{C} \varepsilon^{-1-\mathfrak{a}}
      \exp{
       \left(
       -\frac{2 {\ell_0}}{\varepsilon^{1-\mathfrak{a}}} \
        \min\limits_{i=1,2,3}{\gamma_i}
       \right)
          } \,
      \| v \|_{L^2(\Omega_\varepsilon)}.
\end{equation}

Subtracting the integral identity $(\ref{int-identity})$ from $(\ref{int-identity_U})$
and integrating over $t\in(0, \tau),$ where $\tau\in(0, T],$ we obtain
\begin{equation}\label{int-identity_U-u}
     \int_{0}^{\tau}
     \Big(
      \big\langle
       \partial_t U_\varepsilon^{(1)} - \partial_t u_\varepsilon, \, v
      \big\rangle_\varepsilon
 +    \big\langle
       \mathcal{A}_\varepsilon (t) U_\varepsilon^{(1)}
 -     \mathcal{A}_\varepsilon (t) u_\varepsilon, \, v
      \big\rangle_\varepsilon
     \Big) dt
 =   \int_{0}^{\tau} R_\varepsilon(v) \, dt
\qquad \forall v\in L^2(0, T; \mathcal{H}_\varepsilon).
\end{equation}
Now set $v={U}_\varepsilon^{(1)} - u_\varepsilon$ in (\ref{int-identity_U-u}).
Then, taking into account that $\mathcal{A_\varepsilon}$ is strongly monotone
and (\ref{R1R2+})--(\ref{R12+}),   we arrive to the inequality
\begin{equation*}
       \left\|
        {U}_\varepsilon^{(1)} (\cdot, \tau) - u_\varepsilon (\cdot, \tau)
       \right\|_{L^2(\Omega_\varepsilon)}^2
  +    \left\| {U}_\varepsilon^{(1)} - u_\varepsilon \right\|_{L^2(0, \tau; \mathcal{H}_\varepsilon)}^2
  \leq C \mu(\varepsilon)
       \left\| {U}_\varepsilon^{(1)} - u_\varepsilon \right\|_{L^2(0, \tau; H^1(\Omega_\varepsilon))},
\end{equation*}
whence thanks to  (\ref{equivalent_norm}) it follows (\ref{t0}).
\end{proof}

\begin{corollary}\label{corollary1}
The differences between the solution $u_\varepsilon$ of problem $(\ref{probl})$ and the sum
$$
{U}_\varepsilon^{(0)} (x, t)
 =  \sum\limits_{i=1}^3 \chi_{\ell_0}^{(i)} \left(\frac{x_i}{\varepsilon^\mathfrak{a}}\right)
    \omega_0^{(i)} (x_i, t)
 +  \left(1 - \sum\limits_{i=1}^3 \chi_{\ell_0}^{(i)}
     \left(\frac{x_i}{\varepsilon^\mathfrak{a}}\right)
    \right)
    \omega_0^{(1)}(0, t),
\quad x\in\Omega_\varepsilon\times(0, T)
$$
admit the following asymptotic estimate:
\begin{equation}\label{t5}
      \max\limits_{t\in[0, T]}
      \| \, u_\varepsilon( \cdot, t) - U_\varepsilon^{(0)}( \cdot, t) \|_{L^2(\Omega_\varepsilon)}
 +    \| \, u_\varepsilon - U_\varepsilon^{(0)} \|_{L^2(0, T; H^1(\Omega_\varepsilon))}
 \leq \widetilde{C}_0 \,
      \mu(\varepsilon),
\end{equation}
where $\mu(\varepsilon)$ is defined in (\ref{mu}),
and $\mathfrak{a}$ is a fixed number from the interval $\big(\frac23, 1 \big).$

In each thin cylinder
$\Omega_{\varepsilon,\mathfrak{a}}^{(i)} :=
 \Omega_\varepsilon^{(i)} \cap \big\{ x\in \Bbb{R}^3 : \
 x_i\in I_{\varepsilon, \mathfrak{a}}^{(i)}
 := (3{\ell_0}\varepsilon^\mathfrak{a}, \ell_i) \big\}, \ (i=1,2,3)$
the following estimate holds:
\begin{equation}\label{t7}
      \max\limits_{t\in[0, T]}
      \| \, u_\varepsilon( \cdot, t) - \omega_0^{(i)}( \cdot, t) \|_{L^2(\Omega_{\varepsilon,\mathfrak{a}}^{(i)})}
 +    \| \, u_\varepsilon - \omega_0^{(i)}
      \|_{L^2(0, T; H^1(\Omega_{\varepsilon,\mathfrak{a}}^{(i)}))}
 \leq \widetilde{C}_1 \,
 \mu(\varepsilon),
\end{equation}
where
$\{\omega_0^{(i)}\}_{i=1}^3$ is the solution of the limit problem~$(\ref{main}).$

In the  neighbourhood
$\Omega^{(0)}_{\varepsilon, {\ell_0}} :=
 \Omega_\varepsilon\cap \big\{ x : \ \ x_i<2{\ell_0}\varepsilon, \ i=1,2,3 \big\}$
of the node $\Omega^{(0)}_{\varepsilon},$ we get estimates
\begin{equation}\label{t-joint0}
  \| \,
       \nabla_{x}u_\varepsilon - \nabla_{\xi} \, N_1   \|_{L^2\big(\Omega^{(0)}_{\varepsilon, {\ell_0}} \times (0, T)\big)}
 \le  \widetilde{C}_4 \, \mu(\varepsilon).
\end{equation}
\end{corollary}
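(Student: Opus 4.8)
The plan is to deduce all three estimates of Corollary~\ref{corollary1} from Theorem~\ref{mainTheorem} together with a direct bound on the difference of the two approximations. Since
$$
{U}_\varepsilon^{(1)}(x,t) - {U}_\varepsilon^{(0)}(x,t)
 = \varepsilon \sum_{i=1}^3 \chi_{\ell_0}^{(i)}\!\Big(\tfrac{x_i}{\varepsilon^{\mathfrak a}}\Big)\, \omega_1^{(i)}(x_i,t)
 + \varepsilon\Big(1-\sum_{i=1}^3 \chi_{\ell_0}^{(i)}\!\Big(\tfrac{x_i}{\varepsilon^{\mathfrak a}}\Big)\Big)\, N_1\!\Big(\tfrac{x}{\varepsilon},t\Big),
$$
I would first establish that
$$
\max_{t\in[0,T]} \big\| {U}_\varepsilon^{(1)}(\cdot,t) - {U}_\varepsilon^{(0)}(\cdot,t) \big\|_{L^2(\Omega_\varepsilon)}
 + \big\| {U}_\varepsilon^{(1)} - {U}_\varepsilon^{(0)} \big\|_{L^2(0,T;H^1(\Omega_\varepsilon))}
 \le C\, \varepsilon^{1+\frac{\mathfrak a}{2}} \le C\, \mu(\varepsilon).
$$
Then (\ref{t5}) follows from (\ref{t0}) by the triangle inequality, while the two localized bounds (\ref{t7}) and (\ref{t-joint0}) follow once one identifies which terms of the ansatzes survive on the respective subdomains.

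\textbf{Bounding ${U}_\varepsilon^{(1)}-{U}_\varepsilon^{(0)}$.} On each thin cylinder the functions $\omega_1^{(i)}$ and $\partial_{x_i}\omega_1^{(i)}$ are bounded uniformly on $[0,T]$, while differentiating $\chi_{\ell_0}^{(i)}(x_i/\varepsilon^{\mathfrak a})$ produces a factor $\varepsilon^{-\mathfrak a}$ supported in $\{2\ell_0\varepsilon^{\mathfrak a}<x_i<3\ell_0\varepsilon^{\mathfrak a}\}$; using $|\Omega_\varepsilon^{(i)}|_3 = O(\varepsilon^2)$ and the fact that this support has measure $O(\varepsilon^{2+\mathfrak a})$, the first sum contributes $O(\varepsilon^{2})+O(\varepsilon^{2-\mathfrak a/2})$ in the norm above. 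For the inner term, the support of $1-\sum_i\chi_{\ell_0}^{(i)}(x_i/\varepsilon^{\mathfrak a})$ is contained in $\{x:\ x_i<3\ell_0\varepsilon^{\mathfrak a},\ i=1,2,3\}$, whose Lebesgue measure is $O(\varepsilon^{2+\mathfrak a})$ (the node $\Omega_\varepsilon^{(0)}=\varepsilon\Xi^{(0)}$ itself contributing only $O(\varepsilon^3)$); on that set $\xi=x/\varepsilon$ satisfies $|\xi|\le C\varepsilon^{\mathfrak a-1}$, and by the asymptotics (\ref{inner_asympt}) — in particular the linear growth $\Psi_1^{(i)}(\xi,t)=\xi_i\,\partial_{x_i}\omega_0^{(i)}(0,t)$ of $N_1$ at infinity — one has $|N_1(x/\varepsilon,t)|\le C\varepsilon^{\mathfrak a-1}$ and $|(\nabla_\xi N_1)(x/\varepsilon,t)|\le C$. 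Hence $\varepsilon N_1(x/\varepsilon,t)=O(\varepsilon^{\mathfrak a})$, the cross term $\varepsilon^{-\mathfrak a}\chi'\cdot\varepsilon N_1=O(1)$, and $\nabla_x\big(\varepsilon N_1(x/\varepsilon,t)\big)=(\nabla_\xi N_1)(x/\varepsilon,t)=O(1)$ there; multiplying by the square root of the measure of the support yields a contribution $O(\varepsilon^{1+\mathfrak a/2})$. Since $\mathfrak a\in(\tfrac23,1)$, every exponent produced is at least $1+\mathfrak a/2$, which is the first summand of $\mu(\varepsilon)$ in (\ref{mu}); this gives the claimed bound, and hence (\ref{t5}).

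\textbf{The two localized estimates.} If $x\in\Omega_{\varepsilon,\mathfrak a}^{(i)}$ then $x_i>3\ell_0\varepsilon^{\mathfrak a}$, so $\chi_{\ell_0}^{(i)}(x_i/\varepsilon^{\mathfrak a})=1$, while for $j\neq i$ one has $|x_j|<\varepsilon h_i(x_i)=O(\varepsilon)$, which is $<2\ell_0\varepsilon^{\mathfrak a}$ for $\varepsilon$ small, and therefore $\chi_{\ell_0}^{(j)}(x_j/\varepsilon^{\mathfrak a})=0$; thus ${U}_\varepsilon^{(0)}(x,t)=\omega_0^{(i)}(x_i,t)$ on $\Omega_{\varepsilon,\mathfrak a}^{(i)}\times(0,T)$ and (\ref{t7}) is the restriction of (\ref{t5}) to this subdomain. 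For (\ref{t-joint0}), on $\Omega^{(0)}_{\varepsilon,\ell_0}$ one has $x_i/\varepsilon^{\mathfrak a}\le 2\ell_0\varepsilon^{1-\mathfrak a}<2\ell_0$ for $\varepsilon$ small, so all cut-offs vanish and ${U}_\varepsilon^{(1)}(x,t)=\omega_0^{(1)}(0,t)+\varepsilon N_1(x/\varepsilon,t)$, whence $\nabla_x{U}_\varepsilon^{(1)}(x,t)=(\nabla_\xi N_1)(x/\varepsilon,t)$ by the chain rule. Therefore
$$
\big\|\nabla_x u_\varepsilon-\nabla_\xi N_1\big\|_{L^2(\Omega^{(0)}_{\varepsilon,\ell_0}\times(0,T))}
 =\big\|\nabla_x\big(u_\varepsilon-{U}_\varepsilon^{(1)}\big)\big\|_{L^2(\Omega^{(0)}_{\varepsilon,\ell_0}\times(0,T))}
 \le\big\|u_\varepsilon-{U}_\varepsilon^{(1)}\big\|_{L^2(0,T;H^1(\Omega_\varepsilon))}
 \le C_0\,\mu(\varepsilon),
$$
which is (\ref{t-joint0}).

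\textbf{Main difficulty.} The only step needing genuine care is the estimate of the inner term $\varepsilon N_1(x/\varepsilon,t)$ in the $\varepsilon^{\mathfrak a}$-neighbourhood of the node: one must invoke the polynomial (here linear) growth of $N_1$ at infinity — established through Propositions~\ref{tverd1}--\ref{tverd2} and the asymptotics (\ref{inner_asympt}) — so that $\varepsilon N_1=O(\varepsilon^{\mathfrak a})$ precisely cancels the $\varepsilon^{-\mathfrak a}$ produced by the cut-off derivative, and the restriction $\mathfrak a>\tfrac23$ (already forced in Theorem~\ref{mainTheorem}) ensures that the resulting power $1+\mathfrak a/2$ is both $>1$, so that $\mu(\varepsilon)=o(\varepsilon)$, and no smaller than the other powers that arise along the way. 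The rest is elementary localisation to subdomains together with the triangle inequality.
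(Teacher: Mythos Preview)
Your argument is correct and follows essentially the same route as the paper: deduce (\ref{t5}) from Theorem~\ref{mainTheorem} via the triangle inequality by bounding $\|U_\varepsilon^{(1)}-U_\varepsilon^{(0)}\|$ in the combined norm, then obtain (\ref{t7}) and (\ref{t-joint0}) by localising to the subdomains where the cut-offs are constant. The only cosmetic difference is that the paper, when estimating the inner contribution, splits $N_1=G_1+(N_1-G_1)$ and uses the exponential decay of $N_1-G_1$ (Remark~\ref{rem_exp-decrease}) rather than the linear-growth bound $|N_1(\xi,t)|\le C(1+|\xi|)$ you invoke; both decompositions lead to the same dominant power $\varepsilon^{1+\mathfrak a/2}\le\mu(\varepsilon)$, and for (\ref{t7}) the paper goes through $U_\varepsilon^{(1)}$ directly (adding back $\varepsilon\|\omega_1^{(i)}\|$) whereas you go through $U_\varepsilon^{(0)}$, which is an equally valid and slightly cleaner choice.
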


\begin{proof} Denote by
$\chi_{{\ell_0},\mathfrak{a},\varepsilon}^{(i)}(\cdot) := \chi_{\ell_0}^{(i)}(\tfrac{\cdot}{\varepsilon^\mathfrak{a}})$
(the function $\chi_{\ell_0}^{(i)}$ is determined in (\ref{cut-off-functions})) and
$$
     \|  v \|_{\Omega}^{*}
 :=  \max\limits_{t\in[0, T]}
     \|  v( \cdot, t) \|_{L^2(\Omega)}
 +   \|  v \|_{L^2(0, T; H^1(\Omega))}.
$$
Using the smoothness of the functions $\{\omega_1^{(i)}\}_{i=1}^3$ and the exponential decay of  the functions\\
$\{ N_1-G_1 \}, \ i=1,2,3,$ at infinity, we deduce the inequality
(\ref{t5}) from  estimate (\ref{t0}), namely
$$
     \left\| \, u_\varepsilon-U^{(0)}_{\varepsilon}\right\|_{\Omega_\varepsilon}^{*}
 \le \left\| \, u_\varepsilon-U^{(1)}_{\varepsilon}\right\|_{\Omega_\varepsilon}^{*}
   + \varepsilon \,
     \Bigg\| \,
      \sum_{i=1}^3
      \chi_{{\ell_0},\mathfrak{a},\varepsilon}^{(i)} \, \omega_1^{(i)}
   +  \bigg( 1 - \sum_{i=1}^3 \chi^{(i)}_{{\ell_0},\mathfrak{a},\varepsilon} \bigg) \, N_1
     \Bigg\|_{\Omega_\varepsilon}^{*}
$$
$$
 \le C_1 \, \mu(\varepsilon)
   + \varepsilon \sum\limits_{i=1}^3
     \bigg\| \, \chi_{{\ell_0},\mathfrak{a},\varepsilon}^{(i)} \omega_1
   +  \big(1-\chi_{{\ell_0},\mathfrak{a},\varepsilon}^{(i)}\big) N_1
     \bigg\|_{\Omega^{(i)}_\varepsilon}^{*}
   + \varepsilon   \left\| N_1 \right\|_{\Omega_\varepsilon^{(0)}}^{*}
$$
$$
 \le C_1 \, \mu(\varepsilon)
   + \sum_{i=1}^3
     \bigg\|
      \big(1-\chi_{{\ell_0},\mathfrak{a},\varepsilon}^{(i)}\big) \,
       x_i \frac{d\omega_0^{(i)}}{dx_i}(0, \cdot)
     \bigg\|_{\Omega_\varepsilon^{(i)}}^{*}
   + \varepsilon \sum_{i=1}^3
     \bigg\|
      \big(1-\chi_{{\ell_0},\mathfrak{a},\varepsilon}^{(i)}\big) \,
      \Big( \omega_1^{(i)}(0, \cdot) - \omega_1^{(i)} \Big)
     \bigg\|_{\Omega_\varepsilon^{(i)}}^{*}
$$
$$
   +  \, \varepsilon \sum_{i=1}^3
      \left\| \,
       \omega_1^{(i)}
      \right\|_{\Omega_\varepsilon^{(i)}}^{*}
   +  \varepsilon \sum_{i=1}^3
      \left\| \,
       \big(1-\chi_{{\ell_0},\mathfrak{a},\varepsilon}^{(i)}\big) \,  ( N_1 - G_1 )
      \right\|_{\Omega_\varepsilon^{(i)}}^{*}
$$$$
   + \, \varepsilon^2 \max\limits_{t\in[0, T]}
     \|  N_1( \cdot, t) \|_{L^2(\Xi^{(0)})}
   + \varepsilon^\frac32 \|  N_1 \|_{L^2(0, T; H^1(\Xi^{(0)}))}
 \le \widetilde{C}_0 \, \mu(\varepsilon).
$$

Also with the help of estimate (\ref{t0}), we derive
$$
      \left\| \,
       u_\varepsilon - \omega_0^{(i)}
      \right\|_{\Omega_{\varepsilon,\mathfrak{a}}^{(i)}}^{*}
 \leq \left\| \,
       u_\varepsilon-U^{(1)}_{\varepsilon}
      \right\|_{\Omega_\varepsilon}^{*}
    + \varepsilon
      \left\| \,
       \omega_{1}^{(i)}
      \right\|_{\Omega_{\varepsilon,\mathfrak{a}}^{(i)}}^{*}
 \leq \widetilde{C}_2 \, \mu(\varepsilon),
$$
whence we get (\ref{t7}).

The energetic estimate (\ref{t-joint0}) in a neighbourhood of the node $\Omega^{(0)}_\varepsilon$ follows directly from (\ref{t0}).
\end{proof}

Using the Cauchy-Buniakovskii-Schwarz inequality and  the continuously embedding of
the space $H^1(I_{\varepsilon, \mathfrak{a}}^{(i)})$  in $C\big([3{\ell_0}\varepsilon^\mathfrak{a}, \ell_i]\big),$
it follows from (\ref{t7}) the following corollary.

\begin{corollary}\label{corollary2}
If $h_i(x_i) \equiv h_i \equiv const, \, i=1,2,3,$ then
\begin{equation}\label{t9}
\max\limits_{t\in[0, T]}\| \, (E^{(i)}_\varepsilon u_\varepsilon)(\cdot,t) - \omega_0^{(i)}(\cdot,t) \|_{L^2(I_{\varepsilon, \mathfrak{a}}^{(i)})} +
 \| \, E^{(i)}_\varepsilon u_\varepsilon - \omega_0^{(i)}  \|_{L^2\big(0, T; C([3{\ell_0}\varepsilon^\mathfrak{a}, \ell_i])\big)}
 \leq C_5 \,  \frac{\mu(\varepsilon)}{\varepsilon},
\end{equation}
where $\mu(\varepsilon)$ is defined in (\ref{mu}) and
$$
\big(E^{(i)}_\varepsilon u_\varepsilon\big)(x_i,t)
 = \frac{1}{\pi \varepsilon^2\, h_i^2}
   \int_{\Upsilon^{(i)}_\varepsilon(0)}
   u_\varepsilon(x,t)\, d\overline{x}_i .
$$
\end{corollary}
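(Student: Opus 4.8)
The plan is to deduce $(\ref{t9})$ from the cylinder estimate $(\ref{t7})$ by controlling the cross-sectional averaging operator $E^{(i)}_\varepsilon$ through the full $H^1$-norm over the thin cylinder $\Omega^{(i)}_{\varepsilon,\mathfrak{a}}$. Put $w_\varepsilon := u_\varepsilon - \omega_0^{(i)}$ on $\Omega^{(i)}_{\varepsilon,\mathfrak{a}}\times(0,T)$. Since $\omega_0^{(i)}$ depends only on $(x_i,t)$, it is constant on each cross-sectional disk, so $E^{(i)}_\varepsilon \omega_0^{(i)} = \omega_0^{(i)}$ and hence $E^{(i)}_\varepsilon u_\varepsilon - \omega_0^{(i)} = E^{(i)}_\varepsilon w_\varepsilon$. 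Because $h_i\equiv\mathrm{const}$, the disk $\Upsilon^{(i)}_\varepsilon(x_i)$ coincides with $\Upsilon^{(i)}_\varepsilon(0)$ for every $x_i\in I^{(i)}_{\varepsilon,\mathfrak{a}}$; this is the crucial point, since it lets differentiation in $x_i$ commute with the averaging, $\frac{d}{dx_i}\bigl(E^{(i)}_\varepsilon w_\varepsilon\bigr) = E^{(i)}_\varepsilon\bigl(\partial_{x_i} w_\varepsilon\bigr)$, with no extra boundary term coming from a moving cross-section.

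First I would estimate the averaged function pointwise in $x_i$. By the Cauchy--Buniakovskii--Schwarz inequality on the disk $\Upsilon^{(i)}_\varepsilon(0)$ of area $\pi\varepsilon^2 h_i^2$,
\[
\bigl|E^{(i)}_\varepsilon w_\varepsilon(x_i,t)\bigr|^2 \le \frac{1}{\pi\varepsilon^2 h_i^2}\int_{\Upsilon^{(i)}_\varepsilon(0)} |w_\varepsilon(x,t)|^2\, d\overline{x}_i ,
\]
and the same bound holds with $w_\varepsilon$ replaced by $\partial_{x_i} w_\varepsilon$ on both sides. Integrating in $x_i$ over $I^{(i)}_{\varepsilon,\mathfrak{a}}$, applying Fubini, and using $|\partial_{x_i} w_\varepsilon|\le|\nabla_x w_\varepsilon|$, we get for a.e. $t$
\[
\bigl\|E^{(i)}_\varepsilon w_\varepsilon(\cdot,t)\bigr\|_{H^1(I^{(i)}_{\varepsilon,\mathfrak{a}})} \le \frac{C}{\varepsilon}\,\bigl\|w_\varepsilon(\cdot,t)\bigr\|_{H^1(\Omega^{(i)}_{\varepsilon,\mathfrak{a}})},
\]
with $C$ depending only on $h_i$; in particular the $L^2(I^{(i)}_{\varepsilon,\mathfrak{a}})$-part of the left-hand side is controlled in the same way, and taking the maximum over $t$ and invoking $(\ref{t7})$ bounds the first term in $(\ref{t9})$ by $C\widetilde{C}_1\mu(\varepsilon)/\varepsilon$.

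Next I would invoke the one-dimensional Sobolev embedding. Since $\ell_i\ge 1$ and $3\ell_0\varepsilon^{\mathfrak{a}}\to 0$, the length of $I^{(i)}_{\varepsilon,\mathfrak{a}}=(3\ell_0\varepsilon^{\mathfrak{a}},\ell_i)$ stays bounded below by a positive constant for all small $\varepsilon$, so the embedding $H^1(I^{(i)}_{\varepsilon,\mathfrak{a}})\hookrightarrow C([3\ell_0\varepsilon^{\mathfrak{a}},\ell_i])$ holds with a constant independent of $\varepsilon$. Combining this with the displayed $H^1$-bound and integrating in $t$,
\[
\bigl\|E^{(i)}_\varepsilon w_\varepsilon\bigr\|^2_{L^2(0,T;\,C([3\ell_0\varepsilon^{\mathfrak{a}},\ell_i]))} \le \frac{C}{\varepsilon^2}\int_0^T \bigl\|w_\varepsilon(\cdot,t)\bigr\|^2_{H^1(\Omega^{(i)}_{\varepsilon,\mathfrak{a}})}\,dt = \frac{C}{\varepsilon^2}\,\bigl\|w_\varepsilon\bigr\|^2_{L^2(0,T;H^1(\Omega^{(i)}_{\varepsilon,\mathfrak{a}}))},
\]
and $(\ref{t7})$ bounds the right-hand side by $C\widetilde{C}_1^2\,\mu^2(\varepsilon)/\varepsilon^2$. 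Adding the two contributions gives $(\ref{t9})$ with a constant $C_5$ depending only on $\ell_i$, $h_i$ and $\widetilde{C}_1$. The only steps needing genuine care — and the only places where $h_i\equiv\mathrm{const}$ is actually used — are the commutation of $\partial_{x_i}$ with $E^{(i)}_\varepsilon$ (justified for $H^1$ functions by a routine difference-quotient/approximation argument, exploiting that the cross-section is fixed) and the uniformity in $\varepsilon$ of the Sobolev constant; neither is a real obstacle.
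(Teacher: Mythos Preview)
Your proof is correct and follows essentially the same route as the paper: the paper simply remarks that the corollary follows from estimate~(\ref{t7}) via the Cauchy--Buniakovskii--Schwarz inequality and the continuous embedding $H^1(I^{(i)}_{\varepsilon,\mathfrak{a}})\hookrightarrow C([3\ell_0\varepsilon^{\mathfrak{a}},\ell_i])$. You have fleshed out the details---in particular, the role of $h_i\equiv\mathrm{const}$ in commuting $\partial_{x_i}$ with $E^{(i)}_\varepsilon$, and the $\varepsilon$-uniformity of the one-dimensional Sobolev constant---which the paper leaves implicit.
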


\section{Asymptotic approximation in the case
$\alpha_0 < 0,$ $\alpha_i \ge 1,$ $i\in\{1, 2, 3\}$}\label{alpha_0<0}

Due to (\ref{est-alpha_0}) we conclude that $\omega_0^{(i)}(0,t)=0,$ $i\in\{1, 2, 3\},$
and consequently also $N_0\equiv 0.$ Thus the limit problem ~$(\ref{main})$ splits into the following three independent problems:
\begin{equation}\label{main_al0<0}
 \left\{\begin{array}{l}
    \pi h_i^2(x_i) \dfrac{\partial\omega_0^{(i)}}{\partial{t}} (x_i, t)
  - \pi \dfrac{\partial}{\partial{x_i}}
    \left(h_i^2(x_i)\dfrac{\partial\omega_0^{(i)}}{\partial{x_i}}(x_i, t)\right)
  + \pi h_i^2(x_i) \, k\Big(\omega_0^{(i)}(x_i, t)\Big)
 \\[5mm]
 \begin{array}{rclr}
  + \ 2\pi \delta_{\alpha_i, 1} h_i(x_i)
    \kappa_i\Big(\omega_0^{(i)}(x_i, t), x_i, t \Big)
    & = &
    \widehat{F}_0^{(i)}(x_i, t), \ & (x_i, t) \in I_i\times(0, T),
 \\[3mm]
    \omega_0^{(i)}(0, t) \hspace{0.26cm} = \hspace{0.26cm} \omega_0^{(i)} (\ell_i, t)
    & = &
    0, & t\in(0, T),
 \\[2mm]
    \omega_{0}^{(i)} (x_i, 0)
    & = &
    0, & x_i\in I_i,
 \end{array}
 \end{array}\right.
\end{equation}
where $\{\widehat{F}_0^{(i)}\}_{i=1}^3$ is defined in (\ref{right-hand-side}),  $i\in\{1, 2, 3\}.$

However, to construct an asymptotic approximation and to obtain asymptotic estimates in this case,
we need extra assumptions. Namely, if $\alpha_0\in[-q, -q+1), \ q\in\Bbb{N}$
we assume the following more stronger condition of zero-absorption:
\begin{equation}\label{kappa_0_conditions1}
\begin{array}{c}
\kappa_0 \in C^{q+1}(\Bbb{R}), \qquad
\dfrac{d^{q+1}\kappa_0}{ds^{q+1}} \in L^\infty(\Bbb R), \qquad
\kappa_0(0)=\dfrac{d \kappa_0}{ds}(0)=\ldots=
\dfrac{d^{q-1}\kappa_0}{ds^{q-1}}(0)=0,
\\[2mm]
\exists\, k_->0 \ \ \forall\, s\in\Bbb{R}:\qquad
\dfrac{d^{q}\kappa_0}{ds^{q}}(s)\ge k_- ;
\end{array}
\end{equation}
in addition, if $\alpha_0\neq -q$ then
\begin{multline}\label{kappa_0_conditions2}
f\in C_x^{q-1}\left(\overline{\Omega_{a_0}}\times [0,T]\right)
\,\cap\, C_{\overline{x}_i}^{q}\left(\overline{\Omega_{a_0}^{(i)}}\times [0,T]\right), \qquad
k\in C^{q+1}(\Bbb{R}), \qquad
\dfrac{d^{q_1}k}{ds^{q_1}} \in L^\infty(\Bbb R), \quad
q_1\in \{1,\ldots,q+1\},
\\
\kappa_i\in C^{q+1, q-1, 0}\big(\Bbb{R}\times [0, \ell_i] \times [0,T]\big), \qquad
\dfrac{d^{q_1}\kappa_i}{ds^{q_1}}(\cdot, x_i, t) \in L^\infty(\Bbb R), \quad
q_1\in \{1,\ldots,q+1\},
\end{multline}
uniformly with respect to $x_i\in[0, \ell_i]$ and $t\in[0, T] \ \big(i\in\{1, 2, 3\}\big).$

\begin{proposition}
Under conditions (\ref{kappa_0_conditions1})
\begin{equation}\label{est-alpha_q}
\frac{1}{\varepsilon^3} \int_{\Omega_\varepsilon^{(0)}\times(0,T)} u_\varepsilon^2\, dx \, dt
\le C_4\, \varepsilon^{\min\{1, -\frac{2\alpha_0}{q+1}\}} \ \ \longrightarrow \ 0
\quad \text{as} \ \ \varepsilon \to 0.
\end{equation}
\end{proposition}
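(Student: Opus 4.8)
The plan is to follow the computation leading to (\ref{est-alpha_0}), the two new ingredients being a sharper coercivity of $\kappa_0$ extracted from (\ref{kappa_0_conditions1}) and one H\"older interpolation on the lateral boundary of the node. First I would establish, via Taylor's formula (Lagrange remainder) together with the vanishing conditions $\kappa_0(0)=\kappa_0'(0)=\dots=\kappa_0^{(q-1)}(0)=0$ and the lower bound $\frac{d^{q}\kappa_0}{ds^{q}}\ge k_-$ in (\ref{kappa_0_conditions1}), the representation $\kappa_0(s)=\frac{s^{q}}{q!}\,\frac{d^{q}\kappa_0}{ds^{q}}(\vartheta s)$ with $\vartheta=\vartheta(s)\in(0,1)$, which gives the analogue of (\ref{kappa_ineq++}),
\begin{equation*}
\kappa_0(s)\,s\ \ge\ \frac{k_-}{q!}\,|s|^{q+1}\qquad\forall\,s\in\Bbb R .
\end{equation*}
Because $\kappa_0(0)=0$, the term $\varepsilon^{\alpha_0}\int_{\Gamma_\varepsilon^{(0)}}\kappa_0(u_\varepsilon)u_\varepsilon$ is nonnegative and the $\kappa_0$-terms in the coercivity estimate (\ref{coercc}) drop out, so the a priori bound (\ref{apr_ocin}), $\|u_\varepsilon\|_{L^2(0,T;\mathcal H_\varepsilon)}\le C_1\varepsilon$, continues to hold in this regime.

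Next I would test the integral identity (\ref{int-identity}) with $v=u_\varepsilon$, integrate over $(0,T)$, discard the nonnegative terms $\frac12\|u_\varepsilon(\cdot,T)\|_{L^2(\Omega_\varepsilon)}^2$ and $\|u_\varepsilon\|^2_{L^2(0,T;\mathcal H_\varepsilon)}$, estimate the contributions of $k$ and of $\{\kappa_i\}_{i=1}^3$ through (\ref{kappa_ineq+}), (\ref{ineq1}), (\ref{ineq-3}) and the Cauchy-Schwarz inequality, and bound the right-hand side by $C\varepsilon^2$ with the a priori estimate just recalled. Arguing exactly as in the computation leading to (\ref{est-alpha_1}), but with the bound of the first step in place of (\ref{kappa_ineq++}), this produces
\begin{equation*}
\varepsilon^{\alpha_0}\int_{\Gamma_\varepsilon^{(0)}\times(0,T)}|u_\varepsilon|^{q+1}\,d\sigma_x\,dt\ \le\ C_2\,\varepsilon^2 .
\end{equation*}

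To pass from this bound on $\int_{\Gamma_\varepsilon^{(0)}\times(0,T)}|u_\varepsilon|^{q+1}$ to the desired $L^2$-estimate, for $q\ge2$ I would apply H\"older's inequality on $\Gamma_\varepsilon^{(0)}\times(0,T)$ with exponents $\frac{q+1}{2}$ and $\frac{q+1}{q-1}$, use $|\Gamma_\varepsilon^{(0)}|_2\le C\varepsilon^2$ (homothety $\Omega_\varepsilon^{(0)}=\varepsilon\,\Xi^{(0)}$) and the previous line, obtaining
\begin{equation*}
\int_{\Gamma_\varepsilon^{(0)}\times(0,T)}u_\varepsilon^2\,d\sigma_x\,dt\ \le\ \big(\varepsilon^{-\alpha_0}C_2\varepsilon^{2}\big)^{\frac{2}{q+1}}\big(C\varepsilon^{2}\big)^{\frac{q-1}{q+1}}\ \le\ C_3\,\varepsilon^{\,2-\frac{2\alpha_0}{q+1}}
\end{equation*}
(for $q=1$ this is immediate, since $|u_\varepsilon|^{q+1}=u_\varepsilon^2$). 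Then, exactly as for (\ref{est-alpha_1}), I would invoke the second inequality in (\ref{traceineq0}) with $Q_\varepsilon=\Omega_\varepsilon^{(0)}$ (the cross-sectional disk contributions being absorbed as there), integrate in $t$, estimate $\int_{\Omega_\varepsilon^{(0)}\times(0,T)}|\nabla_x u_\varepsilon|^2\le\|u_\varepsilon\|^2_{L^2(0,T;\mathcal H_\varepsilon)}\le C\varepsilon^2$ by (\ref{apr_ocin}) and the boundary term by the line above, which gives $\int_{\Omega_\varepsilon^{(0)}\times(0,T)}u_\varepsilon^2\,dx\,dt\le C_4\,\varepsilon^{\min\{4,\,3-2\alpha_0/(q+1)\}}$; dividing by $\varepsilon^3$ yields (\ref{est-alpha_q}), and $\min\{1,-2\alpha_0/(q+1)\}>0$ because $\alpha_0<0$.

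The only genuinely delicate point is the first step: one must extract from (\ref{kappa_0_conditions1}) a \emph{global} lower bound $\kappa_0(s)\,s\gtrsim|s|^{q+1}$, keeping track of the parity of $q$ and of the fact that for $q\ge2$ the sub-linear growth hypotheses on $\kappa_0$ built into \textbf{C3} must be relaxed --- which is precisely the role of the structural assumptions (\ref{kappa_0_conditions1})--(\ref{kappa_0_conditions2}). Once that bound and the persistence of (\ref{apr_ocin}) are in hand, the remaining steps are the same parabolic-energy, trace-inequality and homothety-scaling estimates already used for (\ref{est-alpha_0}), enriched only by the single H\"older interpolation above.
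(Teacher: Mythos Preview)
Your proposal is correct and follows essentially the same route as the paper: Taylor's formula with the vanishing conditions in (\ref{kappa_0_conditions1}) gives $\kappa_0(s)s\ge\frac{k_-}{q!}|s|^{q+1}$, the energy identity then yields $\varepsilon^{\alpha_0}\int_{\Gamma_\varepsilon^{(0)}\times(0,T)}|u_\varepsilon|^{q+1}\le C\varepsilon^2$, a single H\"older step on $\Gamma_\varepsilon^{(0)}\times(0,T)$ converts this to an $L^2$ bound, and the trace inequality (\ref{traceineq0}) finishes. The paper handles the first step by passing through $|\kappa_0(s)s|\ge\frac{k_-}{q!}|s|^{q+1}$ and then invoking $\kappa_0(s)s\ge0$ from (\ref{kappa_ineq+}), which sidesteps the parity issue you flag; otherwise the arguments coincide.
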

\begin{proof}
With the help of Taylor's formula (with Lagrange form of the remainder) and
(\ref{kappa_0_conditions1}), we obtain
$$
|\kappa_0(s) \, s|\ge \frac{k_-}{q!}\,  |s|^{q+1}, \quad s\in\Bbb{R}.
$$
Knowing that $\kappa_0(s)s\ge0$ for all $s\in\Bbb{R}$
(see (\ref{kappa_ineq+})), we get
\begin{equation}\label{kappa_0_q!}
\kappa_0(s) \, s\ge \frac{k_-}{q!}\, |s|^{q+1}, \quad s\in\Bbb{R}.
\end{equation}

Similarly as in subsection \ref{a-priori_es},
from the integral identity (\ref{int-identity}) and inequalities (\ref{kappa_ineq+}),
(\ref{ineq1}), (\ref{equivalent_norm}), (\ref{ineq-3}) and (\ref{apr_ocin})
it follows
$$
\varepsilon^{\alpha_0} \int_{\Gamma_\varepsilon^{(0)}\times(0,T)}
\kappa_0(u_\varepsilon) \, u_\varepsilon \, d\sigma_x dt
\le C_1 \varepsilon^2.
$$
Thanks to (\ref{kappa_0_q!}) and H\"{o}lder's inequality, we get
$$
\int_{\Gamma_\varepsilon^{(0)}\times(0,T)} u_\varepsilon^2 \, d\sigma_x dt
 \le \Big(\varepsilon^2 |\Gamma_0|_2 \,T\Big)^{\frac{q-1}{q+1}}
     \bigg(
      \int_{\Gamma_\varepsilon^{(0)}\times(0, T)}
      |u_\varepsilon|^{q+1} \, d\sigma_x dt
     \bigg)^{\frac2{q+1}}
$$
$$
 \le \Big(\varepsilon^2 |\Gamma_0|_2 \,T\Big)^{\frac{q-1}{q+1}}
     \Big(\frac{q!}{k_-}\Big)^{\frac2{q+1}}
     \bigg(
      \int_{\Gamma_\varepsilon^{(0)}\times(0, T)}
      \kappa_0(u_\varepsilon) \, u_\varepsilon \, d\sigma_x dt
     \bigg)^{\frac{2}{q+1}}
 \le C_2 \varepsilon^{2 -\frac{2\alpha_0}{q+1}}.
$$
Now with the help of (\ref{traceineq0}) we get
\begin{equation*}
 \int_{\Omega_\varepsilon^{(0)}\times (0,T)} u_\varepsilon^2\, dx \,dt
 \le C_3
     \Bigg(
      \varepsilon^2 \int_{\Omega_\varepsilon^{(0)}\times(0,T)}
      |\nabla_{x}u_\varepsilon|^2 \, dx dt
 +    \varepsilon
      \int_{\Gamma_\varepsilon^{(0)}\times(0,T)} u_\varepsilon^2 \, d\sigma_x dt
     \Bigg)
 \le C_4 \varepsilon^\vartheta,
\end{equation*}
where $\vartheta:= \min \{4, \ 3 -\frac{2\alpha_0}{q+1}\}.$
\end{proof}

Thus, in consequence of (\ref{est-alpha_q}) we have the same three independent problems (\ref{main_al0<0}) to determine
$\omega_0^{(1)},$ $\omega_0^{(2)}$ and $\omega_0^{(3)}$ if conditions (\ref{kappa_0_conditions1}) take place instead of ${\bf C3}({\rm a}).$

To avoid cumbersome formulas and calculations, we consider the case $q=1,$ i.e. $\alpha_0\in[-1, 0),$ that is more typical and realistic.

\subsection{The case $\alpha_0\in(-1, 0)$}\label{-1<alpha_0<0}

For the regular parts of the approximation in each thin cylinder $\Omega^{(i)}_\varepsilon$ $(i \in \{1, 2, 3\}),$
we propose the following ansatz:
   \begin{equation}\label{regul_-1<al0<0}
   \sum\limits_{\mathfrak{n}\in\mathfrak{A}}
   \Bigg(
    \varepsilon^{\mathfrak{n}}\omega_\mathfrak{n}^{(i)}(x_i, t)
 +  \varepsilon^{\mathfrak{n}+2} u_{\mathfrak{n}+2}^{(i)}
    \left( \frac{\overline{x}_i}{\varepsilon}, x_i, t \right)
   \Bigg)
\end{equation}
where the index set $\mathfrak{A} = \{ 0, \ -\alpha_0, \ -2\alpha_0, \ 1+\alpha_0, \ 1, \ 1-\alpha_0 \};$
and for the inner part of the approximation in a neighborhood of the node $\Omega^{(0)}_\varepsilon$ the ansatz looks as follows
  \begin{equation}\label{junc_-1<al0<0}
    \varepsilon^{-\alpha_0} V_{-\alpha_0}(t)
 +  \varepsilon^{-2\alpha_0} V_{-2\alpha_0}(t)
 + \sum\limits_{\mathfrak{n}\in\mathfrak{I}}
   \bigg(
    \varepsilon^{\mathfrak{n}} V_\mathfrak{n}(t)
 +  \varepsilon^{\mathfrak{n}} N_{\mathfrak{n}}\left( \frac{x}{\varepsilon}, t \right)
   \bigg)
\end{equation}
where the index set $\mathfrak{I} = \{ 1, \ 1-\alpha_0, \ 2+\alpha_0, \ 2 \}.$

Similarly as was done in the subsection \ref{regul_asymp}, we obtain the linear inhomogeneous Neumann boundary-value problems
to define coefficients $\{{u}_{\mathfrak{n}+2}^{(i)}\}$:
\begin{equation}\label{regul_probl_n_-1<al0<0}
\left\{\begin{array}{rcl}
- \Delta_{\overline{\xi}_i}{u}_{\mathfrak{n}+2}^{(i)}
 & = &
 -   \, \dfrac{\partial\omega_\mathfrak{n}^{(i)}}{\partial{t}}
 +   \dfrac{\partial^2\omega_\mathfrak{n}^{(i)}}{\partial{x_i}^2}
 -   \delta_{0, \mathfrak{n}} k\big(\omega_0^{(i)}\big)
 -   (1-\delta_{\mathfrak{n}, 0})
     \bigg(
      k^\prime\big(\omega_0^{(i)}\big) \omega_{\mathfrak{n}}^{(i)}
 +    k^{\prime\prime}\big(\omega_0^{(i)}\big)
      K_{\mathfrak{n}}
      \Big(
       \{\omega_\mathfrak{j}^{(i)}\}_{\mathfrak{j}<\mathfrak{n}}
      \Big)
    \bigg)
\\[2mm]
 &   &
 +   \, \delta_{0, \mathfrak{n}} \, f_0^{(i)}
 +   \delta_{1, \mathfrak{n}} \, f_1^{(i)}
 \qquad \mbox{in} \quad \Upsilon_i (x_i),
\\[2mm]
\partial_{\boldsymbol{\nu}_{\overline{\xi}_i}}
{u}_{\mathfrak{n}+2}^{(i)}
 & = &
    h_i^\prime \dfrac{d\omega_\mathfrak{n}^{(i)}}{d{x_i}}
 -  \sum\limits_{\mathfrak{m}\in\mathfrak{A}} \delta_{\alpha_i, \mathfrak{m}+1}
    \Bigg(
     \delta_{\mathfrak{m}, \mathfrak{n}}
     \kappa_i\big(\omega_0^{(i)}, x_i, t \big)
\\[3mm]
 &   &
 +   \, (1-\delta_{\mathfrak{m}, \mathfrak{n}})
     \bigg(
      \partial_s \kappa_i\big(\omega_0^{(i)}, x_i, t \big)
      \omega_{\mathfrak{n}-\mathfrak{m}}^{(i)}
 +    \partial_{ss}^2 \kappa_i\big(\omega_0^{(i)}, x_i, t \big)
      K_{\mathfrak{n}-\mathfrak{m}}
      \Big(
       \{\omega_\mathfrak{j}^{(i)}
       \}_{\mathfrak{j}<{\mathfrak{n}-\mathfrak{m}}}
      \Big)
     \bigg)
    \Bigg)
\\[3mm]
 &   &
 +  \, \delta_{\beta_i, \mathfrak{n}+1} \, \varphi^{(i)}
 \qquad \mbox{on} \quad \partial\Upsilon_i(x_i),
\end{array}\right.
\end{equation}
where  $\mathfrak{n}, \mathfrak{j} \in\mathfrak{A}, \ i\in\{1,2,3\};$
$\partial_{ss}^2\kappa_i=\partial^2 \kappa_i /\partial s^2;$ and functions
$K_{\mathfrak{n}} := K_{\mathfrak{n}}\Big(
\{z_\mathfrak{j}\}_{\mathfrak{j}<{\mathfrak{n}}}\Big),
\ {\mathfrak{n}}\in\mathfrak{A},$
are defined by the formulas
\begin{equation*}
K_0 \equiv K_{-\alpha_0} \equiv K_{1+\alpha_0} \equiv 0, \quad
K_1 = z_{1+\alpha_0} z_{-\alpha_0}, \quad
K_{-2\alpha_0} = \tfrac12 z_{-\alpha_0}^2, \quad
K_{1-\alpha_0} = z_1 z_{-\alpha_0} + z_{1+\alpha_0} z_{-2\alpha_0}.
\end{equation*}
If $\mathfrak{n} - \mathfrak{m} \notin\mathfrak{A},$ then  $\omega_{\mathfrak{n} - \mathfrak{m}}^{(i)}\equiv0, \ K_{\mathfrak{n} - \mathfrak{m}}\equiv0.$ Also if $\alpha_i\neq{2+\alpha_0},$ then $\omega_{1+\alpha_0}^{(i)} \equiv 0.$

In (\ref{regul_probl_n_-1<al0<0}) the right-hand sides $f_0^{(i)}, f_1^{(i)}$ are defined in the subsection \ref{regul_asymp} and the variables $(x_i, t)$ are regarded as parameters from
$I_\varepsilon^{(i)} \times (0, T).$ Also, we should add conditions
$\langle u_{\mathfrak{n}+2}^{(i)} (\, \cdot \, , x_i, t) \rangle_{\Upsilon_i (x_i)} = 0$ to these problems
to guarantee the uniqueness of the solution.

By the same way as in subsection \ref{inner_asymp}, the coefficients $N_\mathfrak{n}, \ \mathfrak{n}\in\mathfrak{I},$ of the inner part of the asymptotics (\ref{junc_-1<al0<0}) are determined from the following relations:
\begin{equation}\label{junc_probl_n_-1<al0<0}
 \left\{\begin{array}{rcll}
  -\Delta_{\xi}{N_\mathfrak{n}}(\xi, t) & = &
   {F}_{\mathfrak{n}}(\xi, t), &
   \quad \xi\in\Xi,
\\[2mm]
   \partial_{{\boldsymbol \nu}_\xi}{N_{\mathfrak{n}}}(\xi, t) & = &
   B_{\mathfrak{n}}^{(0)}(\xi, t), &
   \quad \xi\in\Gamma_0,
\\[2mm]
   \partial_{{\boldsymbol \nu}_{\overline{\xi}_i}}{N_{\mathfrak{n}}}(\xi, t) & = &
   B_{\mathfrak{n}}^{(i)}(\xi, t), &
   \quad \xi\in\Gamma_i, \quad i=1,2,3,
\\[2mm]
   V_{\mathfrak{n}}(t) + N_{\mathfrak{n}}(\xi, t) & \sim &
   \omega^{(i)}_{\mathfrak{n}}(0, t) + \Psi^{(i)}_{\mathfrak{n}}(\xi, t), &
   \quad \xi_i \to +\infty, \ \ \overline{\xi}_i \in \Upsilon_i(0), \quad i=1,2,3.
 \end{array}\right.
\end{equation}
Whence, using the representation (\ref{new-solution}) (at $n=\mathfrak{n}\in\mathfrak{I}),$
we get the problem
\begin{equation}\label{junc_probl_general_-1<al0<0}
 \left\{\begin{array}{rcll}
 - \Delta_{\xi}{\widetilde{N}_{\mathfrak{n}}}(\xi, t) & = &
   \widetilde{F}_{\mathfrak{n}}(\xi, t), &
   \quad \xi\in\Xi,
\\[2mm]
   \partial_{\boldsymbol{\nu}_\xi}{\widetilde{N}_{\mathfrak{n}}}(\xi, t) & = &
   \widetilde{B}_{\mathfrak{n}}^{(0)}(\xi, t), &
   \quad \xi\in\Gamma_0,
\\[2mm]
   \partial_{\boldsymbol{\nu}_{\overline{\xi}_i}}{\widetilde{N}_{\mathfrak{n}}}(\xi, t) & = &
   \widetilde{B}_{\mathfrak{n}}^{(i)}(\xi, t), &
   \quad \xi\in\Gamma_i, \quad i=1,2,3,
 \end{array}\right.
\end{equation}
to determine ${\widetilde{N}_{\mathfrak{n}}}.$
As before, we demand that $\widetilde{N}_\mathfrak{n}$
satisfies the following stabilization conditions:
\begin{equation}\label{junc_probl_general+cond_al0}
   V_{\mathfrak{n}}(t) + \widetilde{N}_{\mathfrak{n}}(\xi, t)  \rightarrow
   \omega^{(i)}_{\mathfrak{n}}(0, t)
   \quad \text{as} \quad \xi_i \to +\infty, \ \ \overline{\xi}_i \in \Upsilon_i(0), \quad i=1,2,3.
\end{equation}
The variable $t$ in (\ref{junc_probl_n_-1<al0<0}) and (\ref{junc_probl_general_-1<al0<0})
is regarded as parameter from $(0, T).$
The right hand sides in the differential equations and boundary conditions on $\{\Gamma_i\}$
of the problems~ (\ref{junc_probl_n_-1<al0<0}), (\ref{junc_probl_general_-1<al0<0})
and the fourth conditions in (\ref{junc_probl_n_-1<al0<0}) are similarly obtained
as in subsection \ref{inner_asymp}. As a result, we get
\begin{equation*}\begin{array}{c}
\Psi_{\mathfrak{n}}^{(i)}(\xi, t)
 =   \xi_i\dfrac{\partial\omega_{\mathfrak{n}-1}^{(i)}}{\partial x_i}(0, t),
\quad \mathfrak{n}\in\mathfrak{I}\setminus\{2\},
\\[3mm]
\Psi_{2}^{(i)}(\xi, t)
 =   \dfrac{\xi_i^2}{2}\dfrac{\partial^2\omega_{0}^{(i)}}{\partial x_i^2}(0, t)
 +   \xi_i\dfrac{\partial\omega_{1}^{(i)}}{\partial x_i}(0, t)
 +   u_{2}^{(i)} (\overline{\xi}_i, 0, t),
\quad  i=1,2,3.
\end{array}\end{equation*}
$$
F_\mathfrak{n} \equiv 0, \quad \mathfrak{n}\in\mathfrak{I}\setminus\{2\},
\qquad F_2(\xi, t) = - \, k(0) + f(0, t),
$$
$$
\widetilde{F}_{\mathfrak{n}}(\xi, t)
 =  \sum\limits_{i=1}^3
    \Big(
     \xi_i\dfrac{\partial\omega_{\mathfrak{n}-1}^{(i)}}{\partial x_i}(0, t)
     \chi_i^{\prime\prime}(\xi_i)
 +  2\dfrac{\partial\omega_{\mathfrak{n}-1}^{(i)}}{\partial x_i}(0, t)
     \chi_i^{\prime}(\xi_i)
    \Big),
\quad \mathfrak{n}\in\mathfrak{I}\setminus\{2\},
$$
\begin{multline*}
\widetilde{F}_2(\xi, t)
 =  \sum\limits_{i=1}^3
    \Bigg[
    \bigg(
     \dfrac{\xi_i^2}{2}\dfrac{d^2\omega_{0}^{(i)}}{dx_i^2}(0, t)
 +   \xi_i\dfrac{\partial\omega_{1}^{(i)}}{\partial x_i}(0, t)
 +   u_{2}^{(i)} (\overline{\xi}_i, 0, t)
    \bigg)
    \chi_i^{\prime\prime}(\xi_i)
\\
 + 2\bigg(
     \xi_i\dfrac{\partial^2\omega_{0}^{(i)}}{\partial x_i^2}(0, t)
 +   \dfrac{\partial\omega_{1}^{(i)}}{\partial x_i}(0, t)
    \bigg)
    \chi_i^{\prime}(\xi_i)
    \Bigg]
 +  \bigg( 1 - \sum\limits_{i=1}^3 \chi_i(\xi_i) \bigg)
    \Big( f(0, t) - k(0) \Big),
\end{multline*}
\begin{gather*}
B_{1}^{(0)}(\xi, t) = \widetilde{B}_{1}^{(0)}(\xi, t)
 = - \kappa_0^\prime(0) V_{-\alpha_0}(t) + \delta_{\beta_0, 0} \varphi^{(0)}(\xi, t),
\\[1mm]
B_{1-\alpha_0}^{(0)}(\xi, t) = \widetilde{B}_{1-\alpha_0}^{(0)}(\xi, t)
 = - \kappa_0^\prime(0) V_{-2\alpha_0}(t)
 -   \frac12 \kappa_0^{\prime\prime}(0) V_{-\alpha_0}^2(t)
 +   \delta_{\beta_0, -\alpha_0} \varphi^{(0)}(\xi, t),
\\[1mm]
B_{2+\alpha_0}^{(0)}(\xi, t) = \widetilde{B}_{2+\alpha_0}^{(0)}(\xi, t)
 = - \kappa_0^\prime(0) \big( V_{1}(t) + N_{1}(\xi, t) \big)
 +   \delta_{\beta_0, 1+\alpha_0} \varphi^{(0)}(\xi, t),
\\[1mm]
B_{2}^{(0)}(\xi, t) = \widetilde{B}_{2}^{(0)}(\xi, t)
 = - \kappa_0^\prime(0) \big( V_{1-\alpha_0}(t) + N_{1-\alpha_0}(\xi, t) \big)
 -   \kappa_0^{\prime\prime}(0) \big( V_{1}(t) + N_{1}(\xi, t) \big) V_{-\alpha_0}(t)
 +   \delta_{\beta_0, 1} \varphi^{(0)}(\xi, t),
\end{gather*}
\begin{gather*}
B_{\mathfrak{n}}^{(i)} \equiv \widetilde{B}_{\mathfrak{n}}^{(i)}
\equiv 0, \quad  \mathfrak{n}\in\mathfrak{I}\setminus\{2\},
\qquad
B_{2}^{(i)}(\xi, t)
 = - \, \delta_{\alpha_i, 1} \, \kappa_i(0, 0, t)
 +   \delta_{\beta_i, 1} \, \varphi^{(i)} (\overline{\xi}_i, 0, t),
\\[1mm]
\widetilde{B}_{2}^{(i)} (\xi, t)
 =  \left(
 -   \, \delta_{\alpha_i, 1} \kappa_i \big(0, 0, t \big)
 +   \delta_{\beta_i, 1} \, \varphi^{(i)} (\overline{\xi}_i, 0, t)
    \right) \big( 1-\chi_i(\xi_i) \big),
\quad  i=1,2,3.
\end{gather*}

The existence of a solution of the problem (\ref{junc_probl_general_-1<al0<0})
in $\mathcal{H}$  follows from Proposition \ref{tverd1}.
In order to satisfy solvability conditions (\ref{solvability})
of the problem (\ref{junc_probl_general_-1<al0<0}) we choose the values
$V_{\mathfrak{n}-1-\alpha_0}, \ \mathfrak{n}\in\mathfrak{I}$ as follows: $V_{1+\alpha_0}\equiv 0,$
\begin{equation}\label{V}\begin{array}{l}
V_{-\alpha_0}(t)
 =   \dfrac{1}{\kappa_0^{\prime}(0) \, |\Gamma_0|_2}
     \bigg( \,
      \sum\limits_{i=1}^3 \dfrac{\partial \omega_0^{(i)}}{\partial x_i} (0, t)
 +    \delta_{\beta_0, 0} \int\limits_{\Gamma_0} \varphi^{(0)}(\xi, t) \, d\sigma_{\xi}
     \bigg),
\\[3mm]
V_{-2\alpha_0}(t)
 =   \dfrac{1}{\kappa_0^{\prime}(0) \, |\Gamma_0|_2}
     \bigg( \,
      \sum\limits_{i=1}^3 \dfrac{\partial \omega_{-\alpha_0}^{(i)}}{\partial x_i} (0, t)
 -    \frac12 \kappa_0^{\prime\prime}(0) |\Gamma_0|_2 V_{-\alpha_0}^2(t)
 +    \delta_{\beta_0, -\alpha_0}
      \int\limits_{\Gamma_0} \varphi^{(0)}(\xi, t) \, d\sigma_{\xi}
     \bigg),
\\[3mm]
V_{1}(t)
 =   \dfrac{1}{\kappa_0^{\prime}(0) \, |\Gamma_0|_2}
     \bigg( \,
      \sum\limits_{i=1}^3 \dfrac{\partial \omega_{1+\alpha_0}^{(i)}}{\partial x_i} (0, t)
 -    \frac12 \kappa_0^{\prime\prime}(0) \int\limits_{\Gamma_0} N_{1}(\xi, t) \, d\sigma_{\xi}
 +    \delta_{\beta_0, 1+\alpha_0} \int\limits_{\Gamma_0} \varphi^{(0)}(\xi, t) \, d\sigma_{\xi}
     \bigg),
\\[3mm]
V_{1-\alpha_0}(t)
 =   \dfrac{1}{\kappa_0^{\prime}(0) \, |\Gamma_0|_2}
     \bigg( \,
      \sum\limits_{i=1}^3 \dfrac{\partial \omega_{1}^{(i)}}{\partial x_i} (0, t)
 -    \kappa_0^{\prime}(0) \int\limits_{\Gamma_0} N_{1-\alpha_0}(\xi, t) \, d\sigma_{\xi}
 -    \kappa_0^{\prime\prime}(0) V_{-\alpha_0}(t)
      \int\limits_{\Gamma_0} \big( V_{1}(t) + N_{1}(\xi, t) \big) \, d\sigma_{\xi}
\\[2mm] \hspace{1.40cm}
 +    \, \ell_0 \sum\limits_{i=1}^3
      \Big(
       \pi h_i^2(0) \big( k(0)-f(0, t) \big)
 +     2\pi \, \delta_{\alpha_i, 1} \, h_i(0) \kappa_i(0, 0, t)
 -     \delta_{\beta_i, 1}
       \int\limits_{\Upsilon_i(0)} \varphi^{(i)}(\overline{\xi}_i, 0, t) \, dl_{\overline{\xi}_i}
      \Big)
\\[2mm] \hspace{1.40cm}
 -    \, |\Xi^{(0)}|_3 \big( k(0)-f(0, t) \big)
 +    \delta_{\beta_0, 1} \int\limits_{\Gamma_0} \varphi^{(0)}(\xi, t) \, d\sigma_{\xi}
     \bigg).
\end{array}\end{equation}
Again, according to Proposition \ref{tverd1}, the solution can be chosen in a unique way
to guarantee the asymptotics (\ref{inner_asympt_general}) with values
$\boldsymbol{\delta_{\mathfrak{n}}^{(2)}}$ and $\boldsymbol{\delta_{\mathfrak{n}}^{(3)}}$
(at $n=\mathfrak{n}\in\mathfrak{I}).$

It remains to satisfy the stabilization conditions (\ref{junc_probl_general+cond_al0}) at
$\mathfrak{n}\in\{1, \ 1-\alpha_0\}.$
Taking into account the asymptotics (\ref{inner_asympt_general}), we have to put
\begin{equation}\label{trans1_-1<al0<0}
\omega_\mathfrak{n}^{(1)}(0, t) = V_\mathfrak{n}(t), \qquad
\omega_\mathfrak{n}^{(2)}(0, t) = V_\mathfrak{n}(t) + \boldsymbol{\delta_\mathfrak{n}^{(2)}}(t),
\qquad
\omega_\mathfrak{n}^{(3)}(0, t) = V_\mathfrak{n}(t) + \boldsymbol{\delta_\mathfrak{n}^{(3)}}(t),
\quad \mathfrak{n}\in\{1, \ 1-\alpha_0\}.
\end{equation}
As a result, we get the solution of the problem (\ref{junc_probl_n_-1<al0<0})
with the following asymptotics:
\begin{equation}\label{inner_asympt_-1<al0<0}
{N}_{\mathfrak{n}}(\xi, t)
 =- {V}_{\mathfrak{n}}(t) + \omega_{\mathfrak{n}}^{(i)}(0, t)
 +  \Psi_\mathfrak{n}^{(i)}(\xi, t)
 +  {\mathcal O}(\exp(-\gamma_i\xi_i))
\quad \mbox{as} \ \ \xi_i\to+\infty, \qquad i=1,2,3.
\end{equation}

To complete matching the regular and inner asymptotics, we put
\begin{equation}\label{trans2_-1<al0<0}
\omega_{1+\alpha_0}^{(i)}(0, t) = 0, \qquad
\omega_{-\alpha_0}^{(i)}(0, t) = V_{-\alpha_0}(t), \qquad
\omega_{-2\alpha_0}^{(i)}(0, t) = V_{-2\alpha_0}(t), \qquad i=1,2,3.
\end{equation}
With the help of the necessary and sufficient condition for the solvability
of the problem (\ref{regul_probl_n_-1<al0<0}) and conditions (\ref{trans1_-1<al0<0}),
(\ref{trans2_-1<al0<0}), we get the following problems for
$\omega_\mathfrak{n}^{(1)},$ $\omega_\mathfrak{n}^{(2)}$ and $\omega_\mathfrak{n}^{(3)}$  $(\mathfrak{n}\in\mathfrak{A}\setminus\{0\}):$
\begin{equation}\label{omega_probl*_-1<al0<0}
 \left\{\begin{array}{l}
    \pi h_i^2(x_i) \dfrac{\partial\omega_{\mathfrak{n}}^{(i)}}{\partial{t}} (x_i, t)
  - \pi \dfrac{\partial}{\partial{x_i}}
    \left(h_i^2(x_i)\dfrac{\partial\omega_{\mathfrak{n}}^{(i)}}{\partial{x_i}}(x_i, t)\right)
  + \pi h_i^2(x_i) k^\prime\Big(\omega_0^{(i)}(x_i, t)\Big)
    \omega_\mathfrak{n}^{(i)}(x_i, t)
 \\[5mm]
 \begin{array}{rclr}
  + \ 2\pi \, \delta_{\alpha_i, 1} \, h_i(x_i)
    \partial_s \kappa_i\Big(\omega_0^{(i)}(x_i, t), x_i, t \Big)
    \omega_\mathfrak{n}^{(i)}(x_i, t) & = &
    \widehat{F}_\mathfrak{n}^{(i)}(x_i, t), \ & (x_i, t)\in I_i \times (0, T),
 \\[3mm]
    \omega_\mathfrak{n}^{(i)}(0, t) \hspace{0.26cm} = \hspace{0.26cm}
    V_\mathfrak{n}(t) + \boldsymbol{\delta_\mathfrak{n}^{(i)}}(t), \qquad
    \omega_\mathfrak{n}^{(i)} (\ell_i, t) & = &
    0, & t\in(0, T),
 \\[2mm]
    \omega_{\mathfrak{n}}^{(i)} (x_i, 0) & = &
    0, & x_i\in I_i,
 \end{array}
 \end{array}\right.
\end{equation}
for each $i\in\{1, 2, 3\}.$ Here the values $V_{\mathfrak{n}}$ are defined in (\ref{V}),
\begin{multline*}
\widehat{F}_\mathfrak{n}^{(i)}(x_i, t)
 = - \pi h_i^2(x_i) k^{\prime\prime}\Big(\omega_0^{(i)}(x_i, t)\Big)
     K_\mathfrak{n}
     \Big(
      \{\omega_\mathfrak{j}^{(i)}(x_i, t)
      \}_{\mathfrak{j}<\mathfrak{n}}
     \Big)
 -  2 \pi h_i(x_i)
    \sum\limits_{\mathfrak{m}\in\mathfrak{R}} \delta_{\alpha_i, \mathfrak{m}+1}
    \Bigg(
     \delta_{\mathfrak{m}, \mathfrak{n}}
     \kappa_i\Big(\omega_0^{(i)}(x_i, t), x_i, t \Big)
\\
 +   (1-\delta_{\mathfrak{m}, \mathfrak{n}})
     \bigg(
      \partial_s \kappa_i\Big(\omega_0^{(i)}(x_i, t), x_i, t \Big)
      \omega_{\mathfrak{n}-\mathfrak{m}}^{(i)}
 +    \partial_{ss}^2 \kappa_i\Big(\omega_0^{(i)}(x_i, t), x_i, t \Big)
      K_{\mathfrak{n}-\mathfrak{m}}
      \Big(
       \{\omega_\mathfrak{j}^{(i)}(x_i, t)
       \}_{\mathfrak{j}<{\mathfrak{n}-\mathfrak{m}}}
      \Big)
     \bigg)
    \Bigg)
\\
 +   \delta_{1, \mathfrak{n}}
     \int\limits_{\Upsilon_i(x_i)} f_1^{(i)} (\overline{\xi}_i, x_i, t) \, d{\overline{\xi}_i}
 +   \delta_{\beta_i, \mathfrak{n}+1} \int\limits_{\partial\Upsilon_i(x_i)}
     \varphi^{(i)}(\overline{\xi}_i, x_i, t) \, dl_{\overline{\xi}_i},
\quad (x_i, t)\in I_i \times (0, T), \quad i=1,2,3;
\end{multline*}
the values
$\boldsymbol{\delta_{1+\alpha_0}^{(i)}} =\boldsymbol{\delta_{-\alpha_0}^{(i)}}= \boldsymbol{\delta_{-2\alpha_0}^{(i)}} =0, \ i\in\{1,2,3\},$
$\boldsymbol{\delta_1^{(1)}} =  \boldsymbol{\delta_{1-\alpha_0}^{(1)}} = 0,$ and
$\boldsymbol{\delta_1^{(2)}}, \ \boldsymbol{\delta_1^{(3)}}$ and
$\boldsymbol{\delta_{1-\alpha_0}^{(2)}}, \ \boldsymbol{\delta_{1-\alpha_0}^{(3)}}$
are uniquely determined (see Remark~\ref{remark_constant}) by formulas
\begin{multline}\label{t-delta-1}
\boldsymbol{\delta_1^{(i)}}(t)
 =  \int\limits_{\Xi} \mathfrak{N}_i (\xi) \,
    \sum\limits_{j=1}^3
    \bigg(
     \xi_j\dfrac{\partial\omega_0^{(j)}}{\partial{x_j}}(0, t) \chi_j^{\prime\prime}(\xi_j)
 +  2\dfrac{\partial\omega_0^{(j)}}{\partial{x_j}}(0, t) \chi_j^{\prime}(\xi_j)
    \bigg) \, d\xi
\\
 -  \kappa_0^{\prime}(0) V_{-\alpha_0}(t)
    \int\limits_{\Gamma_0} \mathfrak{N}_i(\xi) \, d\sigma_\xi
 +  \delta_{\beta_0, 0}
    \int\limits_{\Gamma_0} \mathfrak{N}_i(\xi) \, \varphi^{(0)} (\xi, t) \, d\sigma_\xi,
\quad i=2,3,
\end{multline}
\begin{multline}\label{t-delta-2}
\boldsymbol{\delta_{1-\alpha_0}^{(i)}}(t)
 =  \int\limits_{\Xi} \mathfrak{N}_i (\xi) \,
    \sum\limits_{j=1}^3
    \bigg(
     \xi_j\dfrac{\partial\omega_{-\alpha_0}^{(j)}}{\partial{x_j}}(0, t)
     \chi_j^{\prime\prime}(\xi_j)
 +   2\dfrac{\partial\omega_{-\alpha_0}^{(j)}}{\partial{x_j}}(0, t) \chi_j^{\prime}(\xi_j)
    \bigg) \, d\xi
\\
 -  \Big(
     \kappa_0^{\prime}(0) V_{-2\alpha_0}(t)
 +   \tfrac12 \kappa_0^{\prime\prime}(0) V_{-\alpha_0}^2(t)
    \Big)
    \int\limits_{\Gamma_0} \mathfrak{N}_i(\xi) \, d\sigma_\xi
 +  \delta_{\beta_0, -\alpha_0}
    \int\limits_{\Gamma_0} \mathfrak{N}_i(\xi) \, \varphi^{(0)} (\xi, t) \, d\sigma_\xi,
\quad i=2,3,
\end{multline}
where $\mathfrak{N}_2$ and $\mathfrak{N}_3$ are defined in Proposition \ref{tverd2}.

The determination  of the terms of the asymptotics is carried out according to the following scheme:
\begin{center}
\begin{tikzpicture}
  \matrix (m) [matrix of math nodes,row sep=3em,column sep=2em,minimum width=2em]
  {
     &  &  &  & \{\omega_{1+\alpha_0}^{(i)}\}_{i=1}^3 & N_{2+\alpha_0} \\
     & \{\omega_{0}^{(i)}\}_{i=1}^3 & N_{1} & V_{1} & \{\omega_{1}^{(i)}\}_{i=1}^3 & N_{2} \\
   V_{-\alpha_0} & \{\omega_{-\alpha_0}^{(i)}\}_{i=1}^3 & N_{1-\alpha_0} & V_{1-\alpha_0} &
   \{\omega_{1-\alpha_0}^{(i)}\}_{i=1}^3 &  \\
   V_{-2\alpha_0} & \{\omega_{-2\alpha_0}^{(i)}\}_{i=1}^3 &  &  &  &  \\
  };
  \path[-stealth]
    (m-1-5) edge [dashed] (m-2-4)
    (m-2-2) edge [dashed] (m-1-5) edge (m-3-1)
    (m-2-3) edge (m-2-4)
    (m-2-4) edge (m-1-6) edge (m-2-5)
    (m-2-5) edge (m-3-4)
    (m-3-1) edge (m-2-3) edge (m-3-2)
    (m-3-2) edge [dashed] (m-2-5) edge (m-4-1)
    (m-3-3) edge (m-3-4)
    (m-3-4) edge (m-2-6) edge (m-3-5)
    (m-4-1) edge (m-3-3) edge (m-4-2)
    (m-4-2) edge [dashed] (m-3-5);
\end{tikzpicture}
\end{center}
\begin{center}
\begin{minipage}[t]{14.3cm}
{\sf Comments to the scheme.} The arrows indicate the order for determining the terms of the asymptotics.
We start with elements $\{\omega_{0}^{(i)}\}_{i=1}^3$
(see (\ref{main_al0<0})) and move across the arrows.
Here the terms$\{\omega_\mathfrak{n}^{(i)}\}_{i=1}^3, \ \mathfrak{n}\in\mathfrak{A}\setminus\{0\}$
and $N_\mathfrak{n}, \ \mathfrak{n}\in\mathfrak{I}$ are determined from the problems
(\ref{omega_probl*_-1<al0<0}) and (\ref{junc_probl_n_-1<al0<0}), respectively;
the values $V_{\mathfrak{n}-1-\alpha_0}, \ \mathfrak{n}\in\mathfrak{I}$ are defined in (\ref{V}).
If $\alpha_j \neq 2+\alpha_0$ for some $j\in\{1,2,3\},$ then
$\omega_{1+\alpha_0}^{(j)}\equiv0$ (see (\ref{regul_probl_n_-1<al0<0}) and comments below)
and term $\omega_{1-\alpha_0}^{(j)}$ does not depend on $\omega_{-2\alpha_0}^{(j)}.$ If
$\alpha_j \neq 2+\alpha_0$ for all $j\in\{1,2,3\},$ then the dashed arrows disappear
and we don't need to find the elements $\{\omega_{-2\alpha_0}^{(i)}\}_{i=1}^3.$
The approximation does not contain the terms $N_{2+\alpha_0}$ and $N_2,$ they are only needed to
find the values $V_1$ and $V_{1-\alpha_0}.$
\end{minipage}
\end{center}

\medskip

Thus, the asymptotic approximation in the case $\alpha_0\in (-1, 0)$ has the following form:
\begin{multline}\label{asymp_expansion_-1<al0<0}
{U}_\varepsilon^{(1-\alpha_0)} (x, t)
 = \sum\limits_{i=1}^3 \chi_{\ell_0}^{(i)} \left(\frac{x_i}{\varepsilon^\mathfrak{a}}\right)
    \left(
     \omega_0^{(i)} (x_i, t)
 +   \varepsilon^{-\alpha_0} \, \omega_{-\alpha_0}^{(i)} (x_i, t)
 +   \varepsilon \, \omega_1^{(i)} (x_i, t)
 +   \varepsilon^{1-\alpha_0} \, \omega_{1-\alpha_0}^{(i)} (x_i, t)
    \right)
\\
 +  \left(1 - \sum\limits_{i=1}^3 \chi_{\ell_0}^{(i)}
     \left(\frac{x_i}{\varepsilon^\mathfrak{a}}\right)
    \right)
    \Bigg(
     \varepsilon^{-\alpha_0} \, V_{-\alpha_0}(t)
 +   \varepsilon \bigg(V_1(t) + N_1 \left( \frac{x}{\varepsilon}, t \right)\bigg)
 +   \varepsilon^{1-\alpha_0}
     \bigg(V_{1-\alpha_0}(t) + N_{1-\alpha_0} \left( \frac{x}{\varepsilon}, t \right)\bigg)
    \Bigg),
\\
(x, t)\in\Omega_\varepsilon\times(0, T),
\end{multline}
where $\mathfrak{a}$ is a fixed number from the interval $\big(\frac23, 1 \big),$
and $\{\chi_{\ell_0}^{(i)}\}_{i=1}^3$ are defined in (\ref{cut-off-functions}).

\begin{theorem}\label{mainTheorem_-1<al0<0}
Let assumptions made in the statement of the problem (\ref{probl}) and (\ref{kappa_0_conditions1}), (\ref{kappa_0_conditions2}) at $q=1$ are satisfied. Then the sum (\ref{asymp_expansion_-1<al0<0}) is the asymptotic approximation for the solution
$u_\varepsilon$ to the boundary-value problem~$(\ref{probl}),$ i.e., \quad
$
      \exists \, {C}_0 >0 \ \ \exists \, \varepsilon_0>0 \ \
      \forall\, \varepsilon\in(0, \varepsilon_0):
$
\begin{equation}\label{t0_-1<al0<0}
      \max\limits_{t\in[0,T]}
      \left\|
       {U}_\varepsilon^{(1-\alpha_0)} (\cdot, t) - u_\varepsilon (\cdot, t)
      \right\|_{L^2(\Omega_\varepsilon)}
  +   \left\|
       {U}_\varepsilon^{(1-\alpha_0)} - u_\varepsilon
      \right\|_{L^2(0, T; {H}^1(\Omega_\varepsilon))}
 \leq {C}_0 \, \mu_0(\varepsilon),
\end{equation}
where $\mu_0(\varepsilon) = o(\varepsilon)$ as $\varepsilon \to 0$ and
\begin{multline}\label{mu_-1<al0<0}
\mu_0(\varepsilon)
 = \bigg(
    \varepsilon^{1+\frac{\mathfrak{a}}2}
 +  \sum_{i=1}^3
     \Big(
      (1-\delta_{\alpha_{i}, 1}-\delta_{\alpha_{i}, 1-\alpha_0}) \varepsilon^{\alpha_i}
 +    (1-\delta_{\beta_{i}, 1}-\delta_{\beta_{i}, 1-\alpha_0}) \varepsilon^{\beta_i}
     \Big)
\\
 +   \varepsilon^{2+\alpha_0} + \varepsilon^{1-\alpha_0}
 +   (1-\delta_{\beta_{0}, 0}-\delta_{\beta_{0}, -\alpha_0})\varepsilon^{\beta_0+1}
   \bigg).
\end{multline}
\end{theorem}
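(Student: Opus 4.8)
The plan is to follow the scheme of the proof of Theorem~\ref{mainTheorem}, the genuinely new feature being the control, on $\Gamma_\varepsilon^{(0)},$ of the nonlinear term $\varepsilon^{\alpha_0}\kappa_0(\cdot),$ whose prefactor now blows up as $\varepsilon\to 0.$ First I would substitute $U_\varepsilon^{(1-\alpha_0)}$ into the differential equation and the boundary conditions of (\ref{probl}) and collect the residuals $\widehat R_\varepsilon$ in $\Omega_\varepsilon\times(0,T),$ $\breve R_\varepsilon^{(0)}$ on $\Gamma_\varepsilon^{(0)}\times(0,T)$ and $\breve R_\varepsilon^{(i)}$ on $\Gamma_\varepsilon^{(i)}\times(0,T),$ $i=1,2,3.$ By the very construction of Subsection~\ref{-1<alpha_0<0} --- the Neumann problems (\ref{regul_probl_n_-1<al0<0}) for $\{u_{\mathfrak n+2}^{(i)}\},$ the problems (\ref{junc_probl_n_-1<al0<0}) for $\{N_{\mathfrak n}\},$ the choices (\ref{V}) of the constants $\{V_{\mathfrak n}\},$ and the limit/auxiliary problems (\ref{main_al0<0}), (\ref{omega_probl*_-1<al0<0}) for $\{\omega_{\mathfrak n}^{(i)}\}$ (whose solvability follows as for (\ref{main}) and from the classical theory of linear parabolic problems, using the regularity (\ref{kappa_0_conditions2}) with $q=1$) --- every term in the residuals up to the last retained power of $\varepsilon$ cancels, so that what is left is of the orders entering $\mu_0(\varepsilon)$ in (\ref{mu_-1<al0<0}). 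Since $\omega_{\mathfrak n}^{(i)}|_{t=0}=0$ (hence, as in the proof of Theorem~\ref{mainTheorem}, $V_{\mathfrak n}|_{t=0}=0$ and $N_{\mathfrak n}|_{t=0}=0$), the approximation leaves no discrepancy in the initial data: $U_\varepsilon^{(1-\alpha_0)}|_{t=0}=0.$

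Next I would pass to the integral formulation. Subtracting (\ref{int-identity}) from the integral identity satisfied by $U_\varepsilon^{(1-\alpha_0)}$ yields
\[
\big\langle\partial_t\big(U_\varepsilon^{(1-\alpha_0)}-u_\varepsilon\big),v\big\rangle_\varepsilon
+\big\langle\mathcal{A}_\varepsilon(t)U_\varepsilon^{(1-\alpha_0)}-\mathcal{A}_\varepsilon(t)u_\varepsilon,v\big\rangle_\varepsilon
=R_\varepsilon(v)
\]
for all $v\in\mathcal{H}_\varepsilon$ and a.e.\ $t,$ where $R_\varepsilon(v)=\int_{\Omega_\varepsilon}\widehat R_\varepsilon\,v\,dx+\sum_{i=0}^{3}\int_{\Gamma_\varepsilon^{(i)}}\breve R_\varepsilon^{(i)}\,v\,d\sigma_x.$ Exactly as in Theorem~\ref{mainTheorem} I would split $R_\varepsilon$ into finitely many terms of four kinds and estimate each by (\ref{ineq1}), (\ref{ineq-3}), (\ref{kappa_ineq+}) and the boundedness of the $s$-derivatives of $k,\{\kappa_i\}$ from (\ref{kappa_0_conditions2}): (i) the discrepancies of $k,f$ and of $\{\kappa_i,\varphi^{(i)}\}_{i=1}^3$ against their retained Taylor pieces, handled through the identities (\ref{int-identity_u_2})--(\ref{int-identity_u_3}) and their $q=1$ analogues, which also absorb the $\mathcal{O}(\varepsilon^2),\mathcal{O}(\varepsilon^3)$ contributions of $\nabla_{\overline{\xi}_i}u_{\mathfrak n+2}^{(i)};$ (ii) the cut-off commutator terms carrying $\chi_{\ell_0}^{(i)}(x_i/\varepsilon^{\mathfrak a})$ and its first two derivatives; (iii) the term in which a derivative of $\chi_{\ell_0}^{(i)}$ meets $N_{\mathfrak n}-G_{\mathfrak n},$ which is exponentially small, since $N_{\mathfrak n}-G_{\mathfrak n}$ decays exponentially as $\xi_i\to+\infty$ (Propositions~\ref{tverd1}, \ref{tverd2}, because $G_{\mathfrak n}$ carries exactly the polynomial growth $\Psi_{\mathfrak n}^{(i)}$) while the support of that derivative lies in $\{2\ell_0\varepsilon^{\mathfrak a}\le x_i\le 3\ell_0\varepsilon^{\mathfrak a}\};$ and (iv) the residual $\breve R_\varepsilon^{(0)}$ on $\Gamma_\varepsilon^{(0)}.$

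For (iv): on $\Gamma_\varepsilon^{(0)}$ the cut-offs vanish for small $\varepsilon,$ so there $U_\varepsilon^{(1-\alpha_0)}$ reduces to the inner expansion, $|U_\varepsilon^{(1-\alpha_0)}|\le C\varepsilon^{-\alpha_0}\to 0$ (because $-\alpha_0\in(0,1)$), and $\partial_\nu U_\varepsilon^{(1-\alpha_0)}=B_1^{(0)}+\varepsilon^{-\alpha_0}B_{1-\alpha_0}^{(0)}.$ Using $\kappa_0(0)=0,$ $\kappa_0'\in L^\infty,$ $\kappa_0''\in L^\infty$ and $\kappa_0'(s)\ge k_->0$ from (\ref{kappa_0_conditions1}) with $q=1,$ I would Taylor-expand $\varepsilon^{\alpha_0}\kappa_0(U_\varepsilon^{(1-\alpha_0)})$ at $0;$ the choices (\ref{V}) of $V_{-\alpha_0},V_1,V_{1-\alpha_0}$ (together with the data $B_1^{(0)},B_{1-\alpha_0}^{(0)}$ of the inner problems) are arranged precisely so that the $\varepsilon^{0}$-order part of $\partial_\nu U_\varepsilon^{(1-\alpha_0)}+\varepsilon^{\alpha_0}\kappa_0(U_\varepsilon^{(1-\alpha_0)})-\delta_{\beta_0,0}\varphi_\varepsilon^{(0)}$ vanishes on $\Gamma_\varepsilon^{(0)},$ and what remains is $o(1)$ in $L^\infty(\Gamma_\varepsilon^{(0)}\times(0,T))$ --- of size $\mathcal{O}(\varepsilon^{-\alpha_0}+\varepsilon^{1+\alpha_0})$ --- plus the non-cancelled datum $\varepsilon^{\beta_0}\varphi_\varepsilon^{(0)}(1-\delta_{\beta_0,0}-\delta_{\beta_0,-\alpha_0}).$ Hence, using $|\Gamma_\varepsilon^{(0)}|_2=\mathcal{O}(\varepsilon^2)$ and the trace inequality (\ref{ineq-3}),
\[
\Big|\int_{\Gamma_\varepsilon^{(0)}}\breve R_\varepsilon^{(0)}\,v\,d\sigma_x\Big|
\le C\Big(\varepsilon^{1-\alpha_0}+\varepsilon^{2+\alpha_0}+\varepsilon^{\beta_0+1}(1-\delta_{\beta_0,0}-\delta_{\beta_0,-\alpha_0})\Big)\,\|v\|_{H^1(\Omega_\varepsilon)}.
\]
Collecting the bounds of (i)--(iv) gives $|R_\varepsilon(v)|\le C\,\mu_0(\varepsilon)\,\|v\|_{H^1(\Omega_\varepsilon)}$ with $\mu_0$ as in (\ref{mu_-1<al0<0}). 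Finally I would run the energy argument of Theorem~\ref{mainTheorem}: integrating the operator identity over $(0,\tau),$ taking $v=U_\varepsilon^{(1-\alpha_0)}-u_\varepsilon,$ using $\langle\partial_t w,w\rangle_\varepsilon=\tfrac12\tfrac{d}{dt}\|w\|_{L^2(\Omega_\varepsilon)}^2$ and the strong monotonicity of $\mathcal{A}_\varepsilon(t),$ one obtains
\[
\big\|U_\varepsilon^{(1-\alpha_0)}(\cdot,\tau)-u_\varepsilon(\cdot,\tau)\big\|_{L^2(\Omega_\varepsilon)}^2
+\big\|U_\varepsilon^{(1-\alpha_0)}-u_\varepsilon\big\|_{L^2(0,\tau;\mathcal{H}_\varepsilon)}^2
\le C\,\mu_0(\varepsilon)\,\big\|U_\varepsilon^{(1-\alpha_0)}-u_\varepsilon\big\|_{L^2(0,\tau;H^1(\Omega_\varepsilon))}
\]
for every $\tau\in(0,T],$ whence (\ref{t0_-1<al0<0}) follows via (\ref{equivalent_norm}) and taking the maximum over $\tau;$ that $\mu_0(\varepsilon)=o(\varepsilon)$ is clear, since $1+\mathfrak a/2>\tfrac43,$ $2+\alpha_0>1,$ $1-\alpha_0>1,$ each surviving $\varepsilon^{\alpha_i},\varepsilon^{\beta_i}$ has exponent strictly above $1,$ and $\varepsilon^{\beta_0+1}$ survives only for $\beta_0\notin\{0,-\alpha_0\}.$

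The hard part will be step~(iv), namely the bookkeeping of $\kappa_0$ on $\Gamma_\varepsilon^{(0)}$ once it is multiplied by the blowing-up weight $\varepsilon^{\alpha_0}:$ one must verify that the Taylor coefficients of $\kappa_0$ packed into $B_1^{(0)},B_{1-\alpha_0}^{(0)},B_{2+\alpha_0}^{(0)},B_2^{(0)}$ through the formulas (\ref{V}) genuinely annihilate the $\mathcal{O}(1)$ part of $\varepsilon^{\alpha_0}\kappa_0(U_\varepsilon^{(1-\alpha_0)})+\partial_\nu U_\varepsilon^{(1-\alpha_0)},$ so that after the trace inequality the whole contribution of $\Gamma_\varepsilon^{(0)}$ is $o(\varepsilon).$ This is exactly where the strengthened zero-absorption hypotheses (\ref{kappa_0_conditions1}) --- already used to get (\ref{est-alpha_q}) --- are indispensable: $\kappa_0'(s)\ge k_->0$ makes the constants $\{V_{\mathfrak n}\}$ well defined (division by $\kappa_0'(0)$ in the solvability conditions) and $\kappa_0''\in L^\infty$ controls the Taylor remainder, while the regularity (\ref{kappa_0_conditions2}) keeps $\{\omega_{\mathfrak n}^{(i)}\},\{N_{\mathfrak n}\},\{V_{\mathfrak n}\}$ smooth enough for the remainder estimates.
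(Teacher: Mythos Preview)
Your proposal is correct and follows essentially the same route as the paper's own proof: both reduce to the scheme of Theorem~\ref{mainTheorem}, with the only substantive novelty being the control of the node-boundary residual $\breve R_\varepsilon^{(0)}$ via the Taylor expansion of $\kappa_0$ at $0$ (using $\kappa_0(0)=0$ and $\kappa_0''\in L^\infty$ from (\ref{kappa_0_conditions1})), after which the trace bound (\ref{ineq-3}) together with $|\Gamma_\varepsilon^{(0)}|_2=\varepsilon^2|\Gamma_0|_2$ yields exactly the contribution $\varepsilon^{2+\alpha_0}+\varepsilon^{1-\alpha_0}+(1-\delta_{\beta_0,0}-\delta_{\beta_0,-\alpha_0})\varepsilon^{\beta_0+1}$ to $\mu_0(\varepsilon)$. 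The paper is terser---it simply writes out $\breve R_\varepsilon^{(0)}$ explicitly and estimates it---but your more detailed bookkeeping (items (i)--(iv), the role of $V_{-2\alpha_0}$ hidden in $B_{1-\alpha_0}^{(0)}$, and the exponential decay of $N_{\mathfrak n}-G_{\mathfrak n}$) is consistent with it.
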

\begin{proof}
The proof of Theorem \ref{mainTheorem_-1<al0<0} repeats the proof of Theorem \ref{mainTheorem}.
To avoid huge amount of calculations we note the main differences.

The residual $\widehat{R}_{\varepsilon}$ in the differential equation in the whole domain
$\Omega_\varepsilon$ and the residuals $\breve{R}_{\varepsilon}^{(i)}$ in the boundary
conditions on the surfaces $\Gamma_i$ of the thin cylinders
$\Omega_\varepsilon^{(i)} \ (i\in\{1,2,3\})$ can be similarly obtained and estimated.

Let us consider the residual that asymptotic approximation (\ref{asymp_expansion_-1<al0<0})
leaves in the boundary condition on the node. We get
\begin{equation*}
   \partial_\nu {U}_\varepsilon^{(1-\alpha_0)}
 + \varepsilon^{\alpha_0} \kappa_0\Big( {U}_\varepsilon^{(1-\alpha_0)}\Big)
 - \varepsilon^{\beta_0} \varphi_\varepsilon^{(0)}
 = \breve{R}_{\varepsilon}^{(0)}
\quad \mbox{on} \quad \Gamma_\varepsilon^{(0)}\times(0, T),
\end{equation*}
where
\begin{multline*}
\breve{R}_{\varepsilon}^{(0)}(x, t)
 =  \varepsilon^{\alpha_0} \kappa_0\Big({U}_\varepsilon^{(1-\alpha_0)}(x, t)\Big)
 -  \kappa_0^{\prime}(0) V_{-\alpha_0}(t)
 -  \varepsilon^{-\alpha_0} \kappa_0^{\prime}(0) V_{-2\alpha_0}(t)
 -  \varepsilon^{-\alpha_0} \tfrac12 \, \kappa_0^{\prime\prime}(0) \, V_{-\alpha_0}^2(t)
\\
 +  (\delta_{\beta_0, 0} + \delta_{\beta_0, -\alpha_0} - 1) \,
    \varepsilon^{\beta_0} \varphi_\varepsilon^{(0)}(x, t),
\qquad (x, t)\in\Gamma_\varepsilon^{(0)}\times(0, T).
\end{multline*}
Denote by
$$
\boldsymbol{N}_\varepsilon^{(1-\alpha_0)}(x, t):=
   \varepsilon^{-\alpha_0}V_{-\alpha_0}(t)
 + \varepsilon\bigg(V_1(t)+N_1\Big(\frac{x}{\varepsilon}, t \Big)\bigg)
 + \varepsilon^{1-\alpha_0}
   \bigg(V_{1-\alpha_0}(t)+N_{1-\alpha_0}\Big(\frac{x}{\varepsilon}, t\Big)\bigg).
$$
Taking into account that
${U}_\varepsilon^{(1-\alpha_0)}=\boldsymbol{N}_\varepsilon^{(1-\alpha_0)}$
on $\Gamma_\varepsilon^{(0)}$ and using Taylor's formula
\begin{equation*}
\kappa_0\Big({U}_\varepsilon^{(1-\alpha_0)}\Big)
 = \kappa_0^{\prime}(0) \boldsymbol{N}_\varepsilon^{(1-\alpha_0)}
 + \int\limits_{0}^{\boldsymbol{N}_\varepsilon^{(1-\alpha_0)}}
   \Big(\boldsymbol{N}_\varepsilon^{(1-\alpha_0)} - s\Big)
   \kappa_0^{\prime\prime}(s) \, ds,
\end{equation*}
we rewrite $\breve{R}_{\varepsilon}^{(0)}$ in the following form:
\begin{multline*}
\breve{R}_{\varepsilon}^{(0)}(x, t)
 =  \varepsilon^{1+\alpha_0} \kappa_0^{\prime}(0)
    \bigg(V_1(t)+N_1\Big(\frac{x}{\varepsilon}, t \Big)\bigg)
 +  \varepsilon \kappa_0^{\prime}(0)
    \bigg(V_{1-\alpha_0}(t)+N_{1-\alpha_0}\Big(\frac{x}{\varepsilon}, t\Big)\bigg)
\\
 +  \varepsilon^{\alpha_0} \int\limits_{0}^{\boldsymbol{N}_\varepsilon^{(1-\alpha_0)}(x, t)}
    \Big(\boldsymbol{N}_\varepsilon^{(1-\alpha_0)}(x, t) - s\Big)
    \kappa_0^{\prime\prime}(s) \, ds
 -  \varepsilon^{-\alpha_0} \kappa_0^{\prime}(0) V_{-2\alpha_0}(t)
 -  \varepsilon^{-\alpha_0} \tfrac12 \kappa_0^{\prime\prime}(0) V_{-\alpha_0}^2(t)
\\
 +  (\delta_{\beta_0, 0} + \delta_{\beta_0, -\alpha_0} - 1) \,
    \varepsilon^{\beta_0} \varphi_\varepsilon^{(0)}(x, t),
\qquad (x, t)\in\Gamma_\varepsilon^{(0)}\times(0, T).
\end{multline*}
With the help of (\ref{ineq-3}), we obtain
\begin{equation*}
|R_{\varepsilon, 3} (v)|
 =   \left| \int_{\Gamma_{\varepsilon}^{(0)}}\breve{R}_{\varepsilon}^{(0)} v \, d\sigma_x
     \right|
 \le \check{C} \sqrt{| \Gamma_0 \, |_2} \,
     \big(
      \varepsilon^{1+\alpha_0} + \varepsilon^{-\alpha_0}
 +    \varepsilon^{\beta_0} ( 1 - \delta_{\beta_0, 0} - \delta_{\beta_0, -\alpha_0})
     \big) \, \varepsilon \, \| v \|_{H^1(\Omega_\varepsilon)},
\end{equation*}
for all $v\in L^2(0, T; \mathcal{H}_\varepsilon)$ and a.e. $t\in(0, T).$
\end{proof}

\subsection{The case $\alpha_0=-1$}\label{alpha_0=-1}

In this case we take ansatzes (\ref{regul}) for the approximation in each thin cylinder $\Omega^{(i)}_\varepsilon$ $(i \in \{1, 2, 3\})$
and entirely repeat all calculations from the subsection~\ref{regul_asymp}.
In a neighborhood of the node $\Omega^{(0)}_\varepsilon$ we consider only one term
$$
\varepsilon N_1\left(\frac{x}{\varepsilon}, t\right).
$$

Similarly as in subsection \ref{inner_asymp} we derive the following relations for $N_1$:
\begin{equation}\label{junc_probl_1_al0=-1}
 \left\{\begin{array}{rcll}
  -\Delta_{\xi}{N_1}(\xi, t) & = &
   0, &
   \quad \xi\in\Xi,
\\[2mm]
   \partial_{{\boldsymbol \nu}_\xi}{N_1}(\xi, t)
 + \kappa_0^{\prime}(0){N_1}(\xi, t) & = &
   B_{1}^{(0)}(\xi, t), &
   \quad \xi\in\Gamma_0,
\\[2mm]
   \partial_{{\boldsymbol \nu}_{\overline{\xi}_i}}{N_1}(\xi, t) & = &
   0, &
   \quad \xi\in\Gamma_i, \quad i=1,2,3,
\\[2mm]
   N_1(\xi, t) & \sim &
   \omega^{(i)}_{1}(0, t) + \Psi^{(i)}_{1}(\xi, t), &
   \quad \xi_i \to +\infty, \ \ \overline{\xi}_i \in \Upsilon_i(0), \quad i=1,2,3.
 \end{array}\right.
\end{equation}
With the help of the representation (\ref{new-solution}) (at $n=1),$ we obtain the problem
\begin{equation}\label{junc_probl_general_al0=-1}
 \left\{\begin{array}{rcll}
 - \Delta_{\xi}{\widetilde{N}_1}(\xi, t) & = &
   \widetilde{F}_1(\xi, t), &
   \quad \xi\in\Xi,
\\[2mm]
   \partial_{\boldsymbol{\nu}_\xi}{\widetilde{N}_1}(\xi, t)
 + \kappa_0^{\prime}(0){\widetilde{N}_1}(\xi, t) & = &
   \widetilde{B}_{1}^{(0)}(\xi, t), &
   \quad \xi\in\Gamma_0,
\\[2mm]
   \partial_{\boldsymbol{\nu}_{\overline{\xi}_i}}{\widetilde{N}_1}(\xi, t) & = &
   0, &
   \quad \xi\in\Gamma_i, \quad i=1,2,3,
\\[2mm]
   \widetilde{N}_1(\xi, t) & \rightarrow &
   \omega^{(i)}_{1}(0, t) &
   \quad \text{as} \quad \xi_i \to +\infty, \ \ \overline{\xi}_i \in \Upsilon_i(0), \quad i=1,2,3,
 \end{array}\right.
\end{equation}
to determine ${\widetilde{N}_1}.$
Here $\{\Psi^{(i)}_{1}\}_{i=1}^3, \ \widetilde{F}_1$
are the same as in subsection \ref{inner_asymp}, and
$
B_{1}^{(0)} = \widetilde{B}_{1}^{(0)} = \delta_{\beta_0, 0} \, \varphi^{(0)}.
$

Similarly as in subsection \ref{inner_asymp}, we introduce the space $\mathcal{H}$ and prove
the existence of a unique weak solution to the problem (\ref{junc_probl_general_al0=-1}). But in contrast to the problem (\ref{junc_probl_general})
we have the Robin condition on $\Gamma_0.$
\begin{definition}
A function $\widetilde{N}_1$ from the space $\mathcal{H}$ is called a weak solution of the problem
(\ref{junc_probl_general_al0=-1}) if the identity
\begin{equation*}
    \int\limits_{\Xi} \nabla \widetilde{N}_1 \cdot \nabla v \, d\xi
 +  \kappa_0^{\prime}(0)\int\limits_{\Gamma_0} \widetilde{N}_1 \, v \, d\sigma_\xi
 =  \int\limits_{\Xi} \widetilde{F}_1 \, v \, d\xi
 +  \int\limits_{\Gamma_0} \widetilde{B}^{(0)}_1 \, v \, d\sigma_\xi
\end{equation*}
holds for all $v\in\mathcal{H}$.
\end{definition}
\begin{proposition}\label{tverd1_al0=-1}
   Let  $\rho^{-1} \widetilde{F}_1 (\cdot\,, t) \in L^2(\Xi), \
    \widetilde{B}^{(0)}_{1} (\cdot\,, t) \in L^2(\Gamma_0)$ for a.e.  $t\in (0, T).$
Then there exist a unique weak solution of problem (\ref{junc_probl_general_al0=-1})
with the following differentiable asymptotics:
\begin{equation}\label{inner_asympt_general_al0=-1}
\widetilde{N}_1(\xi,t)=\left\{
\begin{array}{rl}
    \boldsymbol{\delta_1^{(1)}}(t)
 +  {\mathcal O}\big(\exp( - \gamma_1 \xi_1)\big) & \mbox{as} \ \ \xi_1\to+\infty,
\\[2mm]
    \boldsymbol{\delta_1^{(2)}}(t)
 +  {\mathcal O}\big(\exp( - \gamma_2 \xi_2)\big) & \mbox{as} \ \ \xi_2\to+\infty,
\\[2mm]
    \boldsymbol{\delta_1^{(3)}}(t)
 +  {\mathcal O}\big(\exp( - \gamma_3 \xi_3)\big) & \mbox{as} \ \ \xi_3\to+\infty,
\end{array}
\right.
\end{equation}
where $\gamma_i, \ i=1,2,3 $ are positive constants.
\end{proposition}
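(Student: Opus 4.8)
The plan is to follow the scheme of the proof of Proposition~\ref{tverd1}, the essential new feature being that the Robin term on $\Gamma_0$, whose coefficient $\kappa_0^{\prime}(0)\ge k_->0$ by~(\ref{kappa_0_conditions1}), removes the compatibility obstruction~(\ref{solvability}), so that both existence and uniqueness become direct consequences of coercivity rather than of a Fredholm alternative. First I would introduce the bilinear form
\[
\mathscr{B}(u,v) = \int_{\Xi}\nabla u\cdot\nabla v\,d\xi + \kappa_0^{\prime}(0)\int_{\Gamma_0} u\,v\,d\sigma_\xi, \qquad u,v\in\mathcal{H},
\]
and the linear functional $\mathscr{L}(v)=\int_{\Xi}\widetilde F_1\,v\,d\xi+\int_{\Gamma_0}\widetilde B_1^{(0)}\,v\,d\sigma_\xi$. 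Under the hypotheses $\rho^{-1}\widetilde F_1(\cdot,t)\in L^2(\Xi)$ and $\widetilde B_1^{(0)}(\cdot,t)\in L^2(\Gamma_0)$ the functional $\mathscr{L}$ is bounded on $\mathcal{H}$: the volume term is estimated by the Cauchy--Bunyakovsky inequality after inserting the weights $\rho$ and $\rho^{-1}$, and the boundary term by the continuity of the trace map $\mathcal{H}\to L^2(\Gamma_0)$, $\Gamma_0$ being a bounded piece of $\partial\Xi$. Boundedness of $\mathscr{B}$ is immediate, so the whole argument reduces to coercivity of $\mathscr{B}$ on $\mathcal{H}$.

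The coercivity rests on a weighted Friedrichs-type inequality
\[
\int_{\Xi}|v|^2\rho^2\,d\xi \le C\Big(\int_{\Xi}|\nabla v|^2\,d\xi + \int_{\Gamma_0}|v|^2\,d\sigma_\xi\Big), \qquad v\in\mathcal{H},
\]
which I would prove with a partition of unity subordinate to the covering $\{\Xi^{(0)},\Xi^{(1)},\Xi^{(2)},\Xi^{(3)}\}$. On the bounded core $\Xi^{(0)}$, where $\rho\equiv1$, this is the usual Poincaré--trace inequality and uses $|\Gamma_0|_2>0$; on each semi-infinite outlet $\Xi^{(i)}$, where $\rho(\xi)=|\xi_i|^{-1}$, it follows from the one-dimensional Hardy inequality in the longitudinal variable $\xi_i$ together with a trace bound of $v$ on the interface $\{\xi_i=\ell_0+1\}$ by the Dirichlet integral over the adjacent part of $\Xi$. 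Combined with $\kappa_0^{\prime}(0)\ge k_->0$ this gives $\mathscr{B}(v,v)\ge c\|v\|_{\mathcal{H}}^2$, and the Lax--Milgram lemma then yields, for a.e. $t\in(0,T)$, a unique $\widetilde N_1\in\mathcal{H}$ with $\mathscr{B}(\widetilde N_1,v)=\mathscr{L}(v)$ for every $v\in\mathcal{H}$. This coercivity step is the main obstacle: it is precisely where the present problem departs from Proposition~\ref{tverd1}, and the weighted Hardy/Friedrichs estimate with the $\Gamma_0$-trace contribution must be assembled carefully — though it is of the same nature as the estimates used in \cite{Naz96,ZAA99}.

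It remains to extract the differentiable asymptotics~(\ref{inner_asympt_general_al0=-1}), for which I would argue as in \cite{Naz96,ZAA99}. Since $\widetilde F_1$ contains only the derivatives $\chi_i^{\prime},\chi_i^{\prime\prime}$ of the cut-off functions and the data $\widetilde B_1^{(i)}$ on $\Gamma_i$ vanish, the function $\widetilde N_1(\cdot,t)$ is harmonic in each outlet $\Xi^{(i)}$ with homogeneous Neumann condition on $\Gamma_i$ once $\xi_i>\ell_0+2$; expanding it in the orthonormal basis of eigenfunctions of the Neumann Laplacian on the cross-section $\Upsilon_i(0)$, the nonzero eigenvalues contribute modes $\mathcal{O}(\exp(-\gamma_i\xi_i))$ with $\gamma_i>0$ the square root of the first nonzero Neumann eigenvalue, while the zero eigenvalue contributes an affine term $a_i\xi_i+\boldsymbol{\delta_1^{(i)}}(t)$. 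The inclusion $\widetilde N_1\in\mathcal{H}$ forces $\int_{\Xi^{(i)}}|\widetilde N_1|^2\rho^2\,d\xi<\infty$, that is $\int^{+\infty}|a_i\xi_i+\boldsymbol{\delta_1^{(i)}}(t)|^2\,\xi_i^{-2}\,d\xi_i<\infty$, whence $a_i=0$ and only the constant term survives. Differentiability of the exponentially small remainder follows from interior elliptic regularity applied to the decaying modes. This establishes~(\ref{inner_asympt_general_al0=-1}) with $\boldsymbol{\delta_1^{(i)}}(t)$ the limiting constants, and uniqueness has already been secured by the coercivity of $\mathscr{B}$.
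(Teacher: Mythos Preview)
Your proposal is correct and follows the same approach the paper intends: the paper does not give a detailed proof here but simply remarks that the argument is as in Proposition~\ref{tverd1} (which in turn is referred to \cite{ZAA99}), with the crucial difference that the Robin term on $\Gamma_0$ replaces the Neumann condition. You have correctly identified and exploited this difference --- the positivity $\kappa_0'(0)\ge k_->0$ from (\ref{kappa_0_conditions1}) makes the bilinear form coercive on $\mathcal{H}$, so Lax--Milgram gives existence and uniqueness directly, without the solvability condition (\ref{solvability}) --- and your extraction of the asymptotics via the cross-sectional eigenfunction expansion in the outlets, together with the observation that membership in $\mathcal{H}$ kills the linear mode, is the standard mechanism behind (\ref{inner_asympt_general_al0=-1}).
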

The values $\{\boldsymbol{\delta_1^{(i)}}\}_{i=1}^3$
in (\ref{inner_asympt_general_al0=-1}) are defined as follows:
\begin{equation}\label{const_d_0_al0=-1}
\boldsymbol{\delta_1^{(i)}} (t)
 =  \int_{\Xi} \mathfrak{N}_i(\xi) \, \widetilde{F}_1(\xi, t) \, d\xi
 +  \int_{\Gamma_0} \mathfrak{N}_i(\xi) \, \widetilde{B}_1^{(0)}(\xi, t) \, d\sigma_\xi ,
\quad i=1,2,3,
\end{equation}
where $\{\mathfrak{N}_i\}_{i=1}^3$ are special solutions to
the corresponding homogeneous problem
\begin{equation}\label{hom_probl_al0=-1}
  -\Delta_{\xi}\mathfrak{N} = 0 \ \ \text{in} \ \ \Xi, \qquad
  \partial_\nu \mathfrak{N} + \kappa_0^{\prime}(0)\mathfrak{N}
                            = 0 \ \ \text{on} \ \ \Gamma_0, \qquad
  \partial_\nu \mathfrak{N} = 0 \ \ \text{on} \ \ \partial\Xi\setminus\Gamma_0
\end{equation}
for the problem (\ref{junc_probl_general_al0=-1}).

\begin{proposition}\label{tverd2_al0=-1}
The  problem (\ref{hom_probl_al0=-1}) has three linearly independent solutions
$\{\mathfrak{N}_i\}_{i=1}^3$ that do not belong to the space
${\mathcal H}$ and they have the following differentiable asymptotics:
\begin{equation}\label{inner_asympt_hom_solution_al0=-1}
\mathfrak{N}_i(\xi) = \left\{
\begin{array}{rl}
    C_i^{(1)} + \delta_{i,1} \ \dfrac{\xi_1}{\pi h_1^2(0)}
 +  {\mathcal O}\big(\exp( - \gamma_1 \xi_1)\big) & \mbox{as} \ \ \xi_1\to+\infty,
 \\[3mm]
    C_i^{(2)} + \delta_{i,2} \ \dfrac{\xi_2}{\pi h_2^2(0)}
 +  {\mathcal O}\big(\exp( - \gamma_2 \xi_2)\big) & \mbox{as} \ \ \xi_2\to+\infty,
 \\[3mm]
    C_i^{(3)} + \delta_{i,3} \ \dfrac{\xi_3}{\pi h_3^2(0)}
 +  {\mathcal O}\big(\exp( - \gamma_3 \xi_3)\big) & \mbox{as} \ \ \xi_3\to+\infty,
\end{array}
\right.
\qquad i=1,2,3.
\end{equation}
Any other solution to the homogeneous problem, which has polynomial growth at infinity,
can be presented as a linear combination
$ \mathfrak{c}_1 \mathfrak{N}_1 + \mathfrak{c}_2 \mathfrak{N}_2 + \mathfrak{c}_3 \mathfrak{N}_3.$
\end{proposition}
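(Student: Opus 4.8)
The plan is to follow, essentially verbatim, the argument used for Proposition~\ref{tverd2}, the only new ingredient being the Robin condition on $\Gamma_0$. First I would construct, for each fixed $i\in\{1,2,3\}$, a solution of the form
$$
\mathfrak{N}_i(\xi)=\frac{\xi_i}{\pi h_i^2(0)}\,\chi_i(\xi_i)+\widetilde{\mathfrak{N}}_i(\xi),
$$
where $\chi_i$ is the cut-off from (\ref{new-solution}), so that $\tfrac{\xi_i}{\pi h_i^2(0)}\chi_i(\xi_i)$ is supported in the $i$-th outlet and vanishes identically in a neighbourhood of the node (in particular on $\Gamma_0$ and on every $\Gamma_j$). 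Then $\widetilde{\mathfrak{N}}_i$ has to be a weak solution, in the weighted space $\mathcal H$, of a problem of the type (\ref{junc_probl_general_al0=-1}) whose right-hand side is the smooth, compactly supported function $\widetilde{F}^{*}_{i}:=\Delta_\xi\big(\tfrac{\xi_i}{\pi h_i^2(0)}\chi_i\big)$ (supported in $\{1+\ell_0\le\xi_i\le 2+\ell_0\}\subset\Xi^{(i)}$), with homogeneous data on $\Gamma_0$ and on every $\Gamma_j$. Since $\widetilde{F}^{*}_{i}$ is compactly supported we have $\rho^{-1}\widetilde{F}^{*}_{i}\in L^2(\Xi)$, so Proposition~\ref{tverd1_al0=-1} applies and yields a unique such $\widetilde{\mathfrak{N}}_i\in\mathcal H$ together with its differentiable asymptotics $\widetilde{\mathfrak{N}}_i(\xi)=C_i^{(j)}+\mathcal O(\exp(-\gamma_j\xi_j))$ as $\xi_j\to+\infty$, $j=1,2,3$ (the constants $C_i^{(j)}$ being computable from the second Green formula as in Remark~\ref{remark_constant}). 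I emphasize that, in contrast with the Neumann situation of Propositions~\ref{tverd1} and \ref{tverd2}, no solvability (compatibility) condition appears here: the Robin term $\kappa_0'(0)\int_{\Gamma_0}uv$ with $\kappa_0'(0)\ge k_->0$ (see ${\bf C3}({\rm a})$ and (\ref{kappa_0_conditions1})) makes the bilinear form coercive on $\mathcal H$, which is exactly the unique solvability asserted just before Proposition~\ref{tverd1_al0=-1}. Adding back the explicit growing piece gives $\mathfrak{N}_i$ the asymptotics (\ref{inner_asympt_hom_solution_al0=-1}); in particular $\mathfrak{N}_i\notin\mathcal H$, because a linear growth of $\xi_i$ in the $i$-th outlet makes $\int_\Xi|\mathfrak{N}_i|^2\rho^2\,d\xi$ diverge.

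Next I would check linear independence: if $\sum_{i=1}^3\mathfrak{c}_i\mathfrak{N}_i\equiv0$, then comparing the (mutually distinct) leading terms of (\ref{inner_asympt_hom_solution_al0=-1}) in each semicylinder forces $\mathfrak{c}_i/(\pi h_i^2(0))=0$, hence $\mathfrak{c}_1=\mathfrak{c}_2=\mathfrak{c}_3=0$. For the completeness statement, let $\mathfrak{N}$ be any solution of (\ref{hom_probl_al0=-1}) of polynomial growth. Separating variables in each outlet $\Xi^{(i)}$ over the Neumann eigenfunctions of $-\Delta_{\overline{\xi}_i}$ on the disk $\Upsilon_i(0)$ — a Kondrat'ev-type analysis, cf.\ \cite{Ko-Ol,Naz96} — shows that the only nondecaying contribution is the zeroth transverse mode, which satisfies $-\partial_{\xi_i}^2 c_0=0$; hence $\mathfrak{N}(\xi)=a_i\xi_i+b_i+\mathcal O(\exp(-\gamma_i\xi_i))$ as $\xi_i\to+\infty$, so polynomial growth in fact reduces to at most linear growth. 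Setting $\mathfrak{N}_0:=\mathfrak{N}-\sum_{i=1}^3\pi h_i^2(0)\,a_i\,\mathfrak{N}_i$, this difference solves the same homogeneous problem, has zero linear part in every outlet, hence is bounded, with $\nabla\mathfrak{N}_0\in L^2(\Xi)$ and $\partial_\nu\mathfrak{N}_0$ decaying exponentially along each outlet. Writing the energy identity on the truncated domain $\Xi_R=\Xi\cap\{|\xi_i|<R\}$ and letting $R\to+\infty$ (the cross-sectional boundary integrals tend to $0$ because $\mathfrak{N}_0$ is bounded and its normal derivative decays exponentially) gives $\int_\Xi|\nabla\mathfrak{N}_0|^2+\kappa_0'(0)\int_{\Gamma_0}\mathfrak{N}_0^2=0$; since $\kappa_0'(0)>0$, $\mathfrak{N}_0$ is constant and vanishes on $\Gamma_0$, so $\mathfrak{N}_0\equiv0$. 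Thus $\mathfrak{N}=\sum_{i=1}^3\big(\pi h_i^2(0)\,a_i\big)\mathfrak{N}_i$, which is the asserted representation, and in particular the space of polynomial-growth solutions is exactly three-dimensional.

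The step I expect to be the real point of the proof is the energy/uniqueness argument in the previous paragraph: one must justify carefully that, once the linear growth has been subtracted, the remainder decays exponentially so that the boundary integrals over the receding cross-sections drop out in the limit $R\to+\infty$, and then exploit the \emph{strict} positivity $\kappa_0'(0)\ge k_->0$ to conclude that the nonnegative quadratic form $\int_\Xi|\nabla v|^2+\kappa_0'(0)\int_{\Gamma_0}v^2$ is definite. This is exactly where the Robin case $\alpha_0=-1$ parts company with the Neumann case of Proposition~\ref{tverd2}: there the constant function is a genuine homogeneous solution (so the polynomial-growth space contains the extra free constant $\mathfrak{c}_1$ alongside only two independent nonconstant solutions $\mathfrak{N}_2,\mathfrak{N}_3$), whereas here the absorption term annihilates the constants and forces a genuinely three-dimensional solution space spanned by $\mathfrak{N}_1,\mathfrak{N}_2,\mathfrak{N}_3$. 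The remaining ingredients — the weighted $L^2$-membership and the explicit form of $\widetilde{F}^{*}_{i}$, the derivation of the constants $C_i^{(j)}$ from Green's formula, and the separation-of-variables bookkeeping in the outlets — are entirely routine and can be quoted from \cite{Naz96,ZAA99} and the proof of Proposition~\ref{tverd2}.
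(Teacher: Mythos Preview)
The paper does not actually supply a proof of Proposition~\ref{tverd2_al0=-1}; it is stated and then used without argument, the implicit understanding being that it follows the pattern of Proposition~\ref{tverd2}. Your proposal is precisely that natural adaptation, and it is correct: you construct each $\mathfrak{N}_i$ by the ansatz $\tfrac{\xi_i}{\pi h_i^2(0)}\chi_i+\widetilde{\mathfrak{N}}_i$ with a corrector in $\mathcal H$ obtained from Proposition~\ref{tverd1_al0=-1}, and you correctly isolate the structural difference from the Neumann case---namely that the Robin term with $\kappa_0'(0)\ge k_->0$ removes the solvability constraint and kills the constant solution, so one obtains three independent $\mathfrak{N}_i$ (each growing in a single outlet) rather than two plus a free constant. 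Your energy argument for completeness is also sound.

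One small remark: in the paper's Neumann construction (proof of Proposition~\ref{tverd2}) the ansatz for $\mathfrak{N}_2$ carries linear pieces in \emph{two} outlets with opposite signs, precisely to satisfy the solvability condition~(\ref{solvability}); your observation that in the Robin case a single-outlet ansatz already works is exactly the point, and it matches the stated asymptotics~(\ref{inner_asympt_hom_solution_al0=-1}).
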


In order to satisfy the forth condition in (\ref{junc_probl_general_al0=-1}), we have to put
\begin{equation}\label{trans1_al0=-1}
\omega_1^{(i)}(0, t) = \boldsymbol{\delta_1^{(i)}}(t), \qquad i=1,2,3.
\end{equation}
As a result, we get the solution of the problem (\ref{junc_probl_1_al0=-1}) with the following asymptotics:
\begin{equation}\label{inner_asympt_al0=-1}
{N}_{1}(\xi, t)
 =  \omega_{1}^{(i)} (0, t) + \Psi_1^{(i)} (\xi, t)
 +  {\mathcal O}(\exp(-\gamma_i\xi_i))
\quad \mbox{as} \ \ \xi_i\to+\infty, \quad i=1,2,3.
\end{equation}

Taking into account (\ref{trans1_al0=-1}) we derive for each $i\in\{1,2,3\}$ the problem
\begin{equation}\label{omega_probl*_al0=-1}
 \left\{\begin{array}{l}
    \pi h_i^2(x_i) \dfrac{\partial\omega_1^{(i)}}{\partial{t}} (x_i, t)
  - \pi \dfrac{\partial}{\partial{x_i}}
    \left(h_i^2(x_i)\dfrac{\partial\omega_1^{(i)}}{\partial{x_i}}(x_i, t)\right)
  + \pi h_i^2(x_i) k^\prime\Big(\omega_0^{(i)}(x_i, t)\Big)
    \omega_1^{(i)}(x_i, t)
 \\[5mm]
 \begin{array}{rclr}
  + \ 2\pi \, \delta_{\alpha_i, 1} \, h_i(x_i)
    \partial_s \kappa_i\Big(\omega_0^{(i)}(x_i, t), x_i, t \Big) \omega_1^{(i)}(x_i, t)& = &
    \widehat{F}_1^{(i)}(x_i, t), & (x_i, t)\in I_i \times (0, T),
 \\[4mm]
    \omega_1^{(i)}(0, t) \hspace{0.26cm} = \hspace{0.26cm}
    \boldsymbol{\delta_1^{(i)}}(t), \qquad
    \omega_1^{(i)} (\ell_i, t) & = &
    0, & t\in(0, T),
 \\[3mm]
    \omega_{1}^{(i)} (x_i, 0)
    & = &
    0, & x_i\in I_i,
 \end{array}
 \end{array}\right.
\end{equation}
to determine uniquely $\omega_1^{(i)}.$
 Here $\widehat{F}_1^{(i)}$ are defined in (\ref{omegaF1}), and
\begin{equation}\label{t-delta-3}
\boldsymbol{\delta_1^{(i)}}(t)
 =  \int_{\Xi} \mathfrak{N}_i (\xi) \,
    \sum\limits_{j=1}^3 \dfrac{\partial\omega_0^{(j)}}{\partial{x_j}}(0, t)
    \Big( \xi_j \chi_j^{\prime\prime}(\xi_j) + 2 \chi_j^{\prime}(\xi_j) \Big) \, d\xi
 +  \delta_{\beta_0, 0}
    \int_{\Gamma_0} \mathfrak{N}_i (\xi) \, \varphi^{(0)} (\xi, t) \, d\sigma_\xi,
\quad i=1,2,3,
\end{equation}
where $\{\mathfrak{N}_i\}_{i=1}^3$ are defined in Proposition \ref{tverd2_al0=-1}.

With the help of  $\{\omega_0, \ \omega_1\}_{i=1}^3, \ N_1$
(see (\ref{main_al0<0}), (\ref{omega_probl*_al0=-1}), (\ref{junc_probl_1_al0=-1}), respectively)
we construct the following asymptotic approximation:
\begin{multline}\label{asymp_expansion_al0=-1}
{U}_\varepsilon^{(1)} (x, t)
 = \sum\limits_{i=1}^3 \chi_{\ell_0}^{(i)} \left(\frac{x_i}{\varepsilon^\mathfrak{a}}\right)
    \left( \omega_0^{(i)} (x_i, t) + \varepsilon \, \omega_1^{(i)} (x_i, t) \right)
 +  \left(1 - \sum\limits_{i=1}^3 \chi_{\ell_0}^{(i)}
     \left(\frac{x_i}{\varepsilon^\mathfrak{a}}\right)
    \right)
    \varepsilon N_1 \left( \frac{x}{\varepsilon}, t \right),
\\
\quad (x, t)\in\Omega_\varepsilon\times(0, T),
\end{multline}
where $\mathfrak{a}$ is a fixed number from the interval $\big(\frac23, 1 \big),$
and $\{\chi_{\ell_0}^{(i)}\}_{i=1}^3$ are defined in (\ref{cut-off-functions}).

\begin{theorem}\label{mainTheorem_al0=-1}
Let assumptions made in the statement of the problem (\ref{probl}) and
(\ref{kappa_0_conditions1}) at $q=1$ are satisfied.
Then the sum (\ref{asymp_expansion_al0=-1}) is the asymptotic approximation for the solution
$u_\varepsilon$ to the boundary-value problem~$(\ref{probl}),$ i.e., \quad
$
      \exists \, {C}_0 >0 \ \ \exists \, \varepsilon_0>0 \ \
      \forall\, \varepsilon\in(0, \varepsilon_0):
$
\begin{equation}\label{t0_al0=-1}
      \max\limits_{t\in[0,T]}
      \left\|
       {U}_\varepsilon^{(1)} (\cdot, t) - u_\varepsilon (\cdot, t)
      \right\|_{L^2(\Omega_\varepsilon)}
  +   \left\|
       {U}_\varepsilon^{(1)} - u_\varepsilon
      \right\|_{L^2(0, T; {H}^1(\Omega_\varepsilon))}
 \leq {C}_0 \, \mu_0(\varepsilon),
\end{equation}
where $\mu_0(\varepsilon) = o(\varepsilon)$ as $\varepsilon \to 0$ and
\begin{equation}\label{mu_al0=-1}
\mu_0(\varepsilon)
 = \bigg(
    \varepsilon^{1+\frac{\mathfrak{a}}2}
 +  \sum_{i=1}^3
     \Big(
      (1-\delta_{\alpha_{i}, 1}) \varepsilon^{\alpha_i}
 +    (1-\delta_{\beta_{i}, 1}) \varepsilon^{\beta_i}
     \Big)
 +   (1-\delta_{\beta_{0}, 0})\varepsilon^{\beta_0+1}
  \bigg).
\end{equation}
\end{theorem}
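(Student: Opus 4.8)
The plan is to transcribe the proof of Theorem~\ref{mainTheorem} essentially verbatim, the one genuinely new ingredient being the treatment of the nonlinear Robin term on the lateral surface $\Gamma_\varepsilon^{(0)}$ of the node, where now $\alpha_0=-1$ produces the singular factor $\varepsilon^{-1}$. First I would substitute the ansatz $U_\varepsilon^{(1)}$ from (\ref{asymp_expansion_al0=-1}) into problem (\ref{probl}) and collect the residuals: $\widehat R_\varepsilon$ in the equation, $\breve R_\varepsilon^{(i)}$ ($i=1,2,3$) on the cylinder surfaces and $\breve R_\varepsilon^{(0)}$ on $\Gamma_\varepsilon^{(0)}$. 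Inside the three thin cylinders the computation is word-for-word that of Theorem~\ref{mainTheorem}: the cut-offs $\chi_{\ell_0}^{(i)}(x_i/\varepsilon^{\mathfrak{a}})$ produce commutator terms with coefficients $\varepsilon^{-\mathfrak{a}}$ and $\varepsilon^{-2\mathfrak{a}}$ acting on the Taylor remainders of $\omega_0^{(i)},\omega_1^{(i)}$ at $x_i=0$ and on $N_1-G_1$, while in the bulk of each cylinder the cell identities (\ref{int-identity_u_2})--(\ref{int-identity_u_3}) for $u_2^{(i)},u_3^{(i)}$ cancel the $O(1)$ and $O(\varepsilon)$ parts. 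Near the node, since $\omega_0^{(i)}(0,t)=0$ by (\ref{main_al0<0}), the ansatz reduces to $\varepsilon N_1(x/\varepsilon,t)$, so $-\Delta_x U_\varepsilon^{(1)}=-\varepsilon^{-1}\Delta_\xi N_1=0$ by (\ref{junc_probl_1_al0=-1}) and the equation residual there is merely $\varepsilon\partial_t N_1+k(\varepsilon N_1)-f$, of order $O(1)$ but supported on a set of measure $O(\varepsilon^{2+\mathfrak{a}})$.

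The central step is the node boundary term. On $\Gamma_\varepsilon^{(0)}$ one has $\partial_\nu U_\varepsilon^{(1)}=\partial_{\nu_\xi}N_1$, and by Taylor's formula with Lagrange remainder together with the zero-absorption/$C^2$-boundedness assumptions (\ref{kappa_0_conditions1}) at $q=1$ ($\kappa_0(0)=0$, $\kappa_0''\in L^\infty$),
\[
\varepsilon^{-1}\kappa_0\big(\varepsilon N_1\big)=\kappa_0'(0)\,N_1+\tfrac12\,\varepsilon\,N_1^2\,\kappa_0''(\theta).
\]
Therefore the Robin condition of (\ref{junc_probl_1_al0=-1}), $\partial_{\nu_\xi}N_1+\kappa_0'(0)N_1=\delta_{\beta_0,0}\varphi^{(0)}$, is precisely what annihilates the non-decaying part, leaving $\breve R_\varepsilon^{(0)}=\tfrac12\varepsilon N_1^2\kappa_0''(\theta)+(\delta_{\beta_0,0}-\varepsilon^{\beta_0})\varphi_\varepsilon^{(0)}$. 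I would then estimate $\int_{\Gamma_\varepsilon^{(0)}}\breve R_\varepsilon^{(0)}v\,d\sigma_x$ using the homothety scaling from (\ref{traceineq0}) (namely $\int_{\partial Q_\varepsilon}|\cdot|^2=\varepsilon^2\int_{\partial Q}|\cdot|^2$), the boundedness of $N_1$ on the fixed set $\Gamma_0$ (it stabilizes to the constants $\boldsymbol{\delta_1^{(i)}}(t)$ modulo exponentially small terms, by (\ref{inner_asympt_al0=-1})), and the trace inequality (\ref{ineq-3}) with the Dirichlet condition on $\Upsilon_\varepsilon^{(i)}(\ell_i)$; this yields $|R_{\varepsilon,3}(v)|\le\check{C}\big(\varepsilon^2+(1-\delta_{\beta_0,0})\varepsilon^{\beta_0+1}\big)\|v\|_{H^1(\Omega_\varepsilon)}$.

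Next I would decompose $R_\varepsilon(v)=\sum_j R_{\varepsilon,j}(v)$ exactly as in the proof of Theorem~\ref{mainTheorem}, and estimate the remaining pieces by means of (\ref{ineq1})--(\ref{ineq-3}), the bound $0\le k'\le k_+$, the continuity and boundedness of $f$, $\varphi^{(i)}$, $\kappa_i$ (with the usual $\delta_{\alpha_i,1},\delta_{\beta_i,1}$ book-keeping), and the exponential decay of $N_1-G_1$, obtaining $|R_\varepsilon(v)|\le C\mu_0(\varepsilon)\,\|v\|_{L^2(0,\tau;H^1(\Omega_\varepsilon))}$ with $\mu_0$ as in (\ref{mu_al0=-1}); the dominant node contribution $\varepsilon^{1+\mathfrak{a}/2}$ arises from the $O(1)$-residuals on the set of measure $O(\varepsilon^{2+\mathfrak{a}})$, and $\mathfrak{a}\in(\tfrac23,1)$ makes $\mu_0(\varepsilon)=o(\varepsilon)$. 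Since $\omega_0^{(i)}(x_i,0)=\omega_1^{(i)}(x_i,0)=0$ forces $N_1|_{t=0}=0$, the approximation leaves no residual in the initial condition. Finally, subtracting (\ref{int-identity}) from the integral identity produced by $U_\varepsilon^{(1)}$, integrating in $t\in(0,\tau)$, testing with $v=U_\varepsilon^{(1)}-u_\varepsilon$, and using the strong monotonicity of $\mathcal A_\varepsilon(t)$ established in Section~\ref{exis-uniq}, one gets $\|U_\varepsilon^{(1)}(\cdot,\tau)-u_\varepsilon(\cdot,\tau)\|_{L^2(\Omega_\varepsilon)}^2+\|U_\varepsilon^{(1)}-u_\varepsilon\|_{L^2(0,\tau;\mathcal H_\varepsilon)}^2\le C\mu_0(\varepsilon)\,\|U_\varepsilon^{(1)}-u_\varepsilon\|_{L^2(0,\tau;H^1(\Omega_\varepsilon))}$, whence (\ref{t0_al0=-1}) via the uniform norm equivalence (\ref{equivalent_norm}).

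The main obstacle is the node boundary term: one must (i) verify that the $\varepsilon^{-1}$ singularity is genuinely absorbed by the built-in linearization $\kappa_0'(0)N_1$ in the Robin condition of (\ref{junc_probl_1_al0=-1}) — which is exactly why the stronger zero-absorption hypothesis (\ref{kappa_0_conditions1}) with $q=1$ (i.e.\ $\kappa_0(0)=0$ and bounded $\kappa_0''$) is required — and (ii) control the quadratic remainder $\varepsilon N_1^2\kappa_0''(\theta)$ on $\Gamma_\varepsilon^{(0)}$ through the homothety scaling together with the boundedness of $N_1$ on the limiting boundary $\Gamma_0$. Once this is in place, the rest is a routine adaptation of the proofs of Theorems~\ref{mainTheorem} and \ref{mainTheorem_-1<al0<0}.
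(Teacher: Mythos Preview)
Your proposal is correct and follows essentially the same route as the paper: the paper's proof also simply states that it repeats Theorem~\ref{mainTheorem}, with the sole difference being the node boundary residual $\breve R_\varepsilon^{(0)}=\varepsilon^{-1}\kappa_0(U_\varepsilon^{(1)})-\kappa_0'(0)N_1+(\delta_{\beta_0,0}-1)\varepsilon^{\beta_0}\varphi_\varepsilon^{(0)}$, which is estimated via Taylor's formula and inequality~(\ref{ineq-3}) to give $|R_{\varepsilon,3}(v)|\le\check C\sqrt{|\Gamma_0|_2}\big(\varepsilon+\varepsilon^{\beta_0}(1-\delta_{\beta_0,0})\big)\varepsilon\,\|v\|_{H^1(\Omega_\varepsilon)}$. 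Your explicit write-out of the Taylor expansion and the role of the Robin condition in (\ref{junc_probl_1_al0=-1}) makes the cancellation mechanism clearer than the paper's terse sketch, but the argument is the same.
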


\begin{proof}
The proof of Theorem \ref{mainTheorem_al0=-1} repeats the proof of Theorem \ref{mainTheorem}.
The only difference is the residual on the boundary of the node, namely
\begin{equation*}
\breve{R}_{\varepsilon}^{(0)}(x, t)
 =  \varepsilon^{-1} \kappa_0\Big({U}_\varepsilon^{(1)}(x, t)\Big)
 -  \kappa_0^{\prime}(0){N_1}\Big(\frac{x}{\varepsilon}, t\Big)
 +  (\delta_{\beta_0, 0} - 1) \, \varepsilon^{\beta_0} \varphi_\varepsilon^{(0)}(x, t),
\qquad (x, t)\in\Gamma_\varepsilon^{(0)}\times(0, T).
\end{equation*}
We estimate the value
$$
R_{\varepsilon, 3} (v)
 = \int_{\Gamma_\varepsilon^{(0)}} \breve{R}_{\varepsilon}^{(0)} v \, d\sigma_x
$$
with the help of (\ref{ineq-3})  and Taylor's formula. As a result, we get
\begin{equation*}
|R_{\varepsilon, 3} (v)|
 \le \check{C} \sqrt{| \Gamma_0 \, |_2} \,
     \big( \varepsilon + \varepsilon^{\beta_0} ( 1 - \delta_{\beta_0, 0}) \big)
     \, \varepsilon \, \| v \|_{H^1(\Omega_\varepsilon)},
\end{equation*}
for all $v\in L^2(0, T; \mathcal{H}_\varepsilon)$ and a.e. $t\in(0, T).$
\end{proof}

\begin{remark}\label{rem_6_1}
As we have argued inequalities (\ref{t7}), (\ref{t-joint0}) and (\ref{t9}), we can prove similar inequalities in the case $\alpha_0 \in [-1, 0)$ using
(\ref{t0_-1<al0<0}) and (\ref{t0_al0=-1}).
\end{remark}

\section{Comments}\label{comments}

{\bf 1.} At first glance it may seem that there is no difference between the nonlinear Robin condition
(\ref{in-1}) in the problem (\ref{probl}) and the corresponding linear Neumann condition, since the term $\kappa_i(u_\varepsilon,x_i,t)$ is multiplied by~$\varepsilon^{\alpha_i}$
$(i\in\{1, 2, 3\}).$ However, this is true only if $\alpha_i > 1.$ If $\alpha_i = 1,$ then the
new blow-up term
$$
2 \pi \,  h_i(x_i)\, \kappa_i\Big(\omega_0^{(i)}(x_i, t), x_i, t \Big),
$$
which takes into account the curvilinearity of the thin cylinder $\Omega_\varepsilon^{(i)}$ through the function $h_i,$
appears in the differential equation of the corresponding limit problem (see (\ref{main}) and (\ref{main_al0<0})).

What happens when  $\alpha_i < 1$ for some $i\in\{1, 2, 3\};$ to be specific we put  $\alpha_1 < 1.$
 As in the case ${\bf C3}({\rm a})$ we additionally suppose that there is a constant $k_-$ such that
 $0< k_- \leq \kappa_1^\prime(s,x_1,t)$ for all $s\in\Bbb{R}$ uniformly with respect to $x_1\in[0, \ell_1]$ and $t\in[0, T],$
  and  $\kappa_1(0,x_1,t)=0.$ Then from the integral identity (\ref{int-identity}) and inequalities (\ref{kappa_ineq+}),  (\ref{ineq1}), (\ref{equivalent_norm}), (\ref{ineq-3}) and (\ref{apr_ocin})  it follows
\begin{multline*}
\varepsilon^{\alpha_1} \int_{\Gamma_\varepsilon^{(1)}\times(0,T)} u_\varepsilon^2\, d\sigma_x dt
\le C_1 \Big(|k(0)| \varepsilon + \varepsilon^{\alpha_0 + 1} |\kappa_0(0)| + \sum_{i=2}^3 \varepsilon^{\alpha_i} \max\limits_{[0, \ell_i]\times[0,T]} |\kappa_i(0, x_i, t)|
\\
+
\|f\|_{L^2(\Omega_\varepsilon \times (0,T))} + \varepsilon^{\beta_0} \|\varphi_\varepsilon^{(0)}\|_{L^2(\Gamma_\varepsilon^{(0)}\times (0,T)) }
+
\sum\limits_{i=1}^{3} \varepsilon^{\beta_i - \frac12} \|\varphi_\varepsilon^{(i)}\|_{L^2(\Gamma_\varepsilon^{(i)}\times (0,T)) }\Big)
\|u_\varepsilon\|_{L^2(0,T; \mathcal{H}_\varepsilon)} \le C_2 \varepsilon^2.
\end{multline*}
Now with the help of (\ref{ineq-2}) we get
\begin{equation*}
 \int_{\Omega_\varepsilon^{(1)}\times(0,T)} u_\varepsilon^2\, dx dt \le
 C_3 \Bigg( \varepsilon^2 \int_{\Omega_\varepsilon^{(1)}\times (0,T)} |\nabla_{x}u_\varepsilon|^2 \, dx dt +
  \varepsilon^{1 -\alpha_1} \varepsilon^{\alpha_1} \int_{\Gamma_\varepsilon^{(1)}\times (0,T)} u_\varepsilon^2 \, d\sigma_x dt \Bigg) \le C_4 \varepsilon^\vartheta,
\end{equation*}
where $\vartheta:= \min \{4, \ 3 - \alpha_1\}.$ This means that
\begin{equation}\label{est-alpha_2}
\frac{1}{\varepsilon^2} \int_{\Omega_\varepsilon^{(1)}\times(0,T)} u_\varepsilon^2\, dx dt \le C_4 \, \varepsilon^{\min\{2, 1 - \alpha_1\}} \longrightarrow 0\quad \text{as} \ \ \varepsilon \to 0.
\end{equation}
Due to (\ref{est-alpha_2}) we conclude that $\omega_0^{(1)}\equiv 0.$
If $\alpha_1<0,$ then we can state that there are two  independent problems (\ref{main_al0<0}) $(i= 2$ and $i=3)$ to determine
$\omega_0^{(2)}$ and $\omega_0^{(3)}.$  The view of the limit problem is still unknown for $\alpha_1\in [0, 1);$
we guess that the limit problem will depend on the parameter~$\alpha_0$ in addition.

\medskip

{\bf 2.} From obtained results it follows that the asymptotic behaviour of the solution essentially depends
 on the parameter $\alpha_0$ characterizing the intensity of processes at the boundary of the node.
If $\alpha_0>0$ and $\beta_0>0,$ then the limit problem (\ref{main}) does not feel both  those processes and the node geometry.
In this case, in order to take into account all these factors on the global level, we propose to consider a system  consisting of
the limit problem (\ref{main}) and (\ref{omega_probl*}) on the graph. The coefficients $\boldsymbol{d_1^*},$ $\boldsymbol{\delta_1^{(2)}}$ and $\boldsymbol{\delta_1^{(3)}}$ in the Kirchhoff transmission conditions of the problem (\ref{omega_probl*})
pay respect to all parameters $\{\alpha_i\}_{i=0}^3,$ $\{\beta_i\}_{i=0}^3,$ and many other features (see formulas (\ref{const_d_*}) and (\ref{delta_1})).
This proposition is justified by Theorem \ref{justification}.

The same observation holds for the cases $\alpha_0 \in (-1, 0)$ and $\alpha_0=-1$ despite
the limit problem is split into three problems (\ref{main_al0<0}) and
the problems  (\ref{omega_probl*_-1<al0<0}) and (\ref{omega_probl*_al0=-1})  are independent at first glance.
In fact the Dirichlet condition at the vertex $x=0$ in the problems (\ref{omega_probl*_-1<al0<0})
indicate the dependence of these problems both on previous solutions
$\omega_\mathfrak{n}^{(1)},$ $\omega_\mathfrak{n}^{(2)}$ and $\omega_\mathfrak{n}^{(3)}$ $(\mathfrak{n}\in\mathfrak{A})$ and on other factors
through the values $V_\mathfrak{n}$ and $\boldsymbol{\delta_\mathfrak{n}^{(i)}}$ (see (\ref{V}),  (\ref{t-delta-1}) and (\ref{t-delta-2})) for $\alpha_0 \in (-1, 0);$ \ in the case $\alpha_0=-1$ see the problems (\ref{omega_probl*_al0=-1}) and formulas (\ref{t-delta-3}).

\medskip

{\bf 3.}
Thanks to estimates (\ref{t7}) and (\ref{t-joint0}), we get the zero-order approximation of the gradient (flux) of the solution
$$
\nabla u_\varepsilon(x, t)  \sim \frac{\partial\omega^{(i)}_0}{\partial x_i}(x_i, t)  \quad \text{as}\quad \varepsilon \to 0
$$
in each curvilinear cylinders~$\Omega_{\varepsilon,\mathfrak{a}}^{(i)}$ $(i=1,2, 3)$ and
$$
\nabla u_\varepsilon(x, t) \sim \nabla_{\xi}\big({N}_{1}(\xi, t)\big)\Big|_{\xi=\frac{x}{\varepsilon}} \quad \text{as}\quad \varepsilon \to 0.
$$
in the neighbourhood $\Omega^{(0)}_{\varepsilon, \ell_0}$ of the node.

The estimate (\ref{t-joint0})  is very important if we investigate
processes occurring in a neighbourhood of the node. In this case, in terms of practical application, we propose to apply numerical methods
not to original problems in thin star-shaped junctions, as was done for instance in  \cite{Mardal} without enough accuracy (see the Introduction), and to the corresponding problem for $N_1$ (see (\ref{junc_probl_n}), (\ref{junc_probl_n_-1<al0<0}) at $n=1$ and (\ref{junc_probl_1_al0=-1})).

\medskip

{\bf 4.}
An important problem of existing multi-scale methods is their stability and accuracy. The proof of the
error estimate between the constructed approximation and the exact solution is a general principle that has been
applied to the analysis of the efficiency of a multi-scale method. In our paper, we have constructed and justified the asymptotic
approximation for the solution to problem (\ref{probl}) and proved the corresponding estimates for different values of the parameters
$\{\alpha_i\}$ and $\{\beta_i\}.$ It should be noted here that we do not assume any orthogonality conditions for the right-hand sides in the equation and in the nonlinear Robin boundary conditions.

\smallskip

The results obtained in Theorems~\ref{mainTheorem}, \ref{mainTheorem_-1<al0<0}, \ref{mainTheorem_al0=-1} and Corollaries \ref{corollary1}, \ref{corollary2} argue that  in depending on $\{\alpha_i\}$ and $\{\beta_i\}$ it is possible to replace the complex boundary-value problem (\ref{probl}) with
the corresponding limit problem (\ref{main}) ((\ref{main_al0<0})) on the graph $\mathcal{I}$  with sufficient accuracy measured by the parameter $\varepsilon$ characterizing the thickness and the local geometrical irregularity of the thin star-shaped junction $\Omega_\varepsilon.$

\end{document}